\newskip\stdskip
\newcommand{\C}{\mathbb C}
\newcommand{\R}{\mathbb R}
\newcommand{\Z}{\mathbb Z}
\newcommand{\coeffs}{\mathbbm{k}}
\newcommand{\cF}{\mathcal{F}}
\newcommand{\cG}{\mathcal{G}}
\newcommand{\cH}{\mathcal{H}}
\newcommand{\cI}{\mathcal{I}}
\newcommand{\cU}{\mathcal{U}}
\newcommand{\cV}{\mathcal{V}}
\newcommandx{\sh}[3][1=\R^2,2=\Lambda,3=\kk]{\mathbf{Sh}^\bullet_{#2}(#1,#3)}
\newcommandx{\csheaf}[2][1=\kk,2=X]{\underline{#1}_{#2}}
\newcommand{\Ext}{\operatorname{Ext}}
\newcommand{\Sh}{\mathcal{S}h}
\newcommand{\Fun}{\operatorname{Fun}}
\newcommand{\Ob}{\operatorname{Ob}}
\newcommand{\Hom}{\operatorname{Hom}}
\newcommand{\sHom}{\mathcal{H}om}
\newcommand{\Rep}{\mathcal{R}ep}
\newcommand{\Aug}{\mathcal{A}ug}
\newcommand{\Id}{\mathrm{Id}}
\newcommand{\std}{\mathrm{std}}
\newcommand{\ssm}{\smallsetminus}
\newcommand{\twomatrix}[4]{{\left(\begin{matrix} #1 & #2 \\ #3 & #4 \end{matrix}\right)}}
\newcommand{\img}{\operatorname{Im}}
\newcommand{\coker}{\operatorname{coker}}
\newcommand{\ssabut}{ \ \ \ \Longrightarrow\ \ \  }
\newcommand{\Mat}{\operatorname{Mat}}
\newcommand{\LCC}{\mathit{LCC}}
\newcommand{\LCH}{\mathit{LCH}}
\newcommand{\HW}{\mathit{HW}}
\newcommand{\kk}{\mathbbm{k}}
\newcommand{\A}{\mathcal{A}}
\newcommand{\e}{\epsilon}
\newcommand{\End}{\operatorname{End}}
\newtheorem{thm}{Theorem}[section]
\newtheorem{lemma}[thm]{Lemma}
\newtheorem{cor}[thm]{Corollary}
\newtheorem{prop}[thm]{Proposition}
\newtheorem{conj}[thm]{Conjecture}
\theoremstyle{definition}
\newtheorem{dfn}[thm]{Definition}
\newtheorem{rem}[thm]{Remark}
\begin{document}
\author[B. Chantraine]{Baptiste Chantraine}
\author[L. Ng]{Lenhard Ng}
\author[S. Sivek]{Steven Sivek}

\address{Universit\'e de Nantes}
\email{baptiste.chantraine@univ-nantes.fr}

\address{Duke University}
\email{ng@math.duke.edu}

\address{Imperial College London}
\email{s.sivek@imperial.ac.uk}

\title{Representations, sheaves, and Legendrian $(2,m)$ torus links}

\begin{abstract}
We study an $A_\infty$ category associated to Legendrian links in $\R^3$ whose objects are $n$-dimensional representations of the Chekanov--Eliashberg differential graded algebra of the link. This representation category generalizes the positive augmentation category and we conjecture that it is equivalent to a category of sheaves of microlocal rank $n$ constructed by Shende, Treumann, and Zaslow. We establish the cohomological version of this conjecture for a family of Legendrian $(2,m)$ torus links.
\end{abstract}

\maketitle

\section{Introduction}
\label{sec:introduction}

\subsection{Main results and conjectures}

Legendrian contact homology \cite{Eliashberg,EliashbergGiventalHofer} is a powerful invariant of Legendrian links which associates a differential graded algebra $(\A_\Lambda,\partial_\Lambda)$, often called the \emph{Chekanov--Eliashberg DGA}, to a link $\Lambda$ in a contact manifold $Y$.  This noncommutative algebra is generated freely by Reeb chords of $\Lambda$, and its differential counts certain holomorphic disks in $\R\times Y$ with boundary on $\R\times\Lambda$.

Chekanov \cite{Chekanov} defined this invariant combinatorially for Legendrian links in the standard contact $\R^3$, and introduced the use of \emph{augmentations} to extract more computable information from the DGA.  These are DGA morphisms $\epsilon: (\A_\Lambda,\partial_\Lambda) \to (\coeffs,0)$, where $\coeffs$ is the base ring, and they produce chain complexes $\LCC_*^\epsilon(\Lambda) = (\ker\epsilon / (\ker\epsilon)^2, \partial^\epsilon)$ for what is called \emph{linearized contact homology}.  The set of homologies $\{\LCH_*^\epsilon(\Lambda)\}$ over all $\epsilon$ is an effective invariant of $\Lambda$, which has been heavily studied because it is much easier to use in practice than the full DGA.

In recent years, it has become clear that these invariants have a much richer structure than simply a set: they can be organized into an $A_\infty$ category \cite{BC}, and even a unital $A_\infty$ category $\Aug_+(\Lambda,\coeffs)$ \cite{NRSSZ}, which is an invariant of $\Lambda$ up to $A_\infty$ equivalence.  The objects of this \emph{augmentation category} are the augmentations of $(\A_\Lambda,\partial_\Lambda)$, and the morphism spaces $\Hom(\epsilon_1,\epsilon_2)$ are an appropriate generalization of the linearized cochain complexes.

Meanwhile, Shende, Treumann, and Zaslow \cite{STZ} (building on \cite{GKS}) defined for each Legendrian link $\Lambda$ in $\R^3$ a differential graded category $\Sh(\Lambda,\coeffs)$, whose objects are constructible sheaves of $\coeffs$-modules on the plane with singular support controlled by the front projection of $\Lambda$.  Evidence suggested that in some cases, the sheaves with ``microlocal rank 1'' corresponded bijectively to augmentations of $\Lambda$, and this was no coincidence: the main theorem of \cite{NRSSZ} asserts that the full subcategory $\Sh_1(\Lambda,\coeffs)$ of such sheaves is equivalent to $\Aug_+(\Lambda,\coeffs)$ over any field $\coeffs$.  This provides a link between the disparate worlds of holomorphic curve invariants in contact geometry and homological algebra invariants. 

The goal of this paper, then, is to begin investigating natural generalizations of this theorem.
Beyond augmentations, a natural way to get a handle on the Chekanov--Eliashberg DGA is by studying its \emph{representations}, and in fact we expect that the finite-dimensional representation theory of this DGA over a field $\coeffs$ is contained in $\Sh(\Lambda,\coeffs)$.  Specifically, it is reasonable to guess that objects of the analogous category $\Sh_n(\Lambda,\coeffs)$ of sheaves with microlocal rank $n$ should correspond not to augmentations, but rather to $n$-dimensional representations of $(\A_\Lambda, \partial_\Lambda)$, cf.\ \cite{NgRutherford},
with an augmentation being a 1-dimensional representation.  Two possible ways to generalize the $A_\infty$ structure of $\Aug_+(\Lambda,\coeffs)$ to an $A_\infty$ category of $n$-dimensional representations were proposed in \cite{CDGG}.

In the present paper we pick out one of these constructions in particular and compare the resulting category to the sheaf category $\Sh_n(\Lambda,\coeffs)$ in a family of examples.

More precisely, given any integer $n \geq 1$, a field $\coeffs$, and a Legendrian link $\Lambda \subset \R^3$, we construct an $A_\infty$ category called the \textit{representation category} $\Rep_n(\Lambda,\coeffs)$. The objects of this category are $n$-dimensional representations of the Chekanov--Eliashberg differential graded algebra $(\A_\Lambda,\partial_\Lambda)$ associated to $\Lambda$, that is, algebra maps
\[
\rho :\thinspace \A_\Lambda \to \Mat_n(\coeffs),
\]
where $\Mat_n(\coeffs)$ is the $\coeffs$-algebra of $n\times n$ matrices over $\coeffs$, such that $\rho$ satisfies $\rho \circ \partial_\Lambda = 0$. When $n=1$, the category $\Rep_1(\Lambda,\coeffs)$ agrees with the positive augmentation category $\Aug_+(\Lambda,\coeffs)$ from \cite{NRSSZ}. The definition of $\Rep_n(\Lambda,\coeffs)$ is given in Section~\ref{sec:repr-categ-legendr}, though it is essentially contained in the more general discussion of ``noncommutative augmentation categories'' in \cite{CDGG}.

If the main result of \cite{NRSSZ} is that the augmentation category $\Aug_+(\Lambda,\coeffs)$ is equivalent to the sheaf category $\Sh_1(\Lambda,\coeffs)$, then we expect that even beyond just considering objects, the representation category $\Rep_n(\Lambda,\coeffs)$ should itself correspond on the sheaf side to the category $\Sh_n(\Lambda,\coeffs)$ of constructible sheaves with microsupport on $\Lambda$ and microlocal rank $n$.  We formulate the following conjecture.

\begin{conj}[``Representations are sheaves'']
Let $\Lambda$ be a Legendrian link in $\R^3$, and let $\coeffs$ be a field. Then for any $n\geq 1$, there is an $A_\infty$ equivalence of $A_\infty$ categories
\label{conj:main}
\[
\Rep_n(\Lambda,\coeffs) \stackrel{\sim}{\to} \Sh_n(\Lambda,\coeffs).
\]
\end{conj}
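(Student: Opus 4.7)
The plan is to generalize the approach of \cite{NRSSZ}, which established the $n=1$ case by producing an equivalence via an intermediate combinatorial category. I would first introduce a model category $\mathcal{MCS}_n(\Lambda,\coeffs)$ of ``rank-$n$ Morse complex sequences'' associated to the front projection of $\Lambda$. Objects would consist of rank-$n$ free $\coeffs$-modules attached to regions of the front, together with prescribed handleslide-type data encoding the local behavior at crossings and cusps, and morphisms would carry an $A_\infty$ structure built from a discretization of the holomorphic disk moduli spaces that define the DGA differential.

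Next I would prove two intermediate $A_\infty$ equivalences. For $\mathcal{MCS}_n(\Lambda,\coeffs) \simeq \Sh_n(\Lambda,\coeffs)$ one follows the cellular description of \cite{STZ}: a constructible sheaf with microsupport on $\Lambda$ and microlocal rank $n$ is determined by its stalks on the strata of the front together with the generization maps across edges and cusps, and these data match the Morse complex data by construction. For $\mathcal{MCS}_n(\Lambda,\coeffs) \simeq \Rep_n(\Lambda,\coeffs)$ one adapts the bordered/plat arguments of \cite{NRSSZ}, using the explicit local form of the Chekanov--Eliashberg differential at crossings and cusps to identify an $n$-dimensional representation $\rho : \A_\Lambda \to \Mat_n(\coeffs)$ with the handleslide matrices of an MCS. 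For Legendrian $(2,m)$ torus links the front admits a particularly simple plat presentation with only two strands to track, which is why this family is a natural place to carry out the identification first, as the paper does at the cohomological level.

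The third step is to assemble the two intermediate equivalences into the desired $A_\infty$ functor $\Rep_n(\Lambda,\coeffs) \to \Sh_n(\Lambda,\coeffs)$ and verify its behavior under Legendrian Reidemeister moves. Since both $\Rep_n$ and $\Sh_n$ are already known to be Legendrian invariants, this reduces to a local model calculation at each Reidemeister move, which can be done in a neighborhood of the region where the move occurs.

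The main obstacle will be the inherent noncommutativity for $n \geq 2$. In the augmentation case, gauge freedom is valued in $\coeffs^\times$ and acts diagonally, so the intermediate model is essentially a chain-level object; for general $n$ the gauge group is $GL_n(\coeffs)$ acting by conjugation, so $\mathcal{MCS}_n$ must be built carefully up to noncommutative gauge, and the higher $A_\infty$ operations must be shown to descend to this quotient. Relatedly, matching the trace-type contributions that noncommutative $\rho$'s produce from disk counts on the representation side with the hom-complexes between sheaves is far more delicate than in rank one. I expect it is precisely this noncommutative obstruction that forces the present paper to restrict to the cohomological (rather than full $A_\infty$) statement and to the $(2,m)$ torus family, where the crossings sit in a single braid and the gauge group action can be pinned down explicitly.
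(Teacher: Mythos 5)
The statement you are addressing is labeled as a \emph{conjecture} in the paper, and the paper does not prove it: it establishes only Theorem~\ref{thm:princ}, the cohomology-level equivalence $H^*\Rep_n(\Lambda_m,\coeffs) \simeq H^*\Sh_n(\Lambda_m,\coeffs)$ for the rainbow closures of $\sigma_1^m \in B_2$, and explicitly disclaims any attempt at the general $A_\infty$ statement. Your proposal is a plausible sketch of how one might try to extend the $n=1$ argument of \cite{NRSSZ}, but it is not a proof of the conjecture, and you acknowledge this yourself in the final paragraph: the noncommutative obstruction you name is precisely where the argument stops being an argument. So there is a genuine gap, and it is the one you already see, but it deserves to be stated more sharply. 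The paper itself identifies the specific failure mode: the local computations of \cite{NRSSZ} rely on a classification of simple representations of a particular quiver algebra, and that classification does not extend to higher-rank representations. Your proposed $\mathcal{MCS}_n$ category would have to confront exactly this, and moreover the $A_\infty$ version of $\mathcal{MCS}$ you invoke (with operations ``built from a discretization of the holomorphic disk moduli spaces'') has not been constructed even for $n=1$; \cite{NRSSZ} does not in fact route $\Aug_+ \simeq \Sh_1$ through an $A_\infty$ model category of Morse complex sequences, so building $\mathcal{MCS}_n$ with the structure you need is itself an open problem of comparable difficulty.

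On the comparison with what the paper does do: for $\Lambda_m$ the paper carries out a direct, object-free calculation on both sides rather than passing through any combinatorial intermediate. On the representation side it computes $\mu_1$ and $\mu_2$ explicitly from the $2$- and $3$-copy DGAs (Propositions~\ref{prp:cohrep} and the formulas \eqref{eq:7}--\eqref{eq:3}); on the sheaf side it identifies objects via the continuant polynomials $P_m, Q_m$ (Proposition~\ref{PropObjSH}), computes $\Ext^0$ and $\Ext^1$ concretely via extensions (Propositions~\ref{prp:ext0}, \ref{prp:ext1}), proves $\Ext^{\geq 2}=0$ by a \v{C}ech argument (Section~\ref{sec:vanishing-ext_2}), and then matches the two sides by transpose. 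The reason for restricting to $(2,m)$ torus links is that both categories reduce to elementary matrix linear algebra in a single pair of block sizes, not that the $GL_n$ gauge action is easier to pin down along a braid. Finally, the paper's own suggested route toward the full $A_\infty$ equivalence is not an MCS intermediate but rather either Hochschild cohomology and deformation theory (see the remark following Theorem~\ref{thm:princ}) or the explicit Viterbo/Bourgeois--Shende-style functor sketched in the geometric motivation subsection; if you want to pursue the conjecture, those are the avenues the authors consider most promising.
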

In particular, on the level of cohomology, Conjecture~\ref{conj:main} would immediately imply the following result.

\begin{cor}
If $\Lambda$ is a Legendrian link, 
\label{cor:main}
then the cohomology categories $H^*\Rep_n(\Lambda,\coeffs)$ and $H^*\Sh_n(\Lambda,\coeffs)$ are equivalent.
\end{cor}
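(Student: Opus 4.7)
The plan is to observe that this is a formal consequence of Conjecture~\ref{conj:main} via the general principle that an $A_\infty$ (quasi-)equivalence of $A_\infty$ categories descends to an honest equivalence of their cohomology categories. No non-trivial geometric or homological input beyond the conjecture itself is needed, so there is no real obstacle: the corollary is meant to unpack what is already built into the notion of $A_\infty$ equivalence.

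First, I would recall the definition: an $A_\infty$ functor $F \colon \cC \to \cD$ is an $A_\infty$ equivalence when (a) for each pair of objects $X,Y \in \cC$, the linear term $F_1 \colon \Hom_\cC(X,Y) \to \Hom_\cD(FX,FY)$ is a quasi-isomorphism of cochain complexes, and (b) $F$ is essentially surjective on cohomology, meaning every object of $\cD$ is isomorphic in $H^*\cD$ to $FX$ for some $X \in \Ob(\cC)$.

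Next, I would explain that the passage to cohomology is functorial for $A_\infty$ functors: the operation $H^*$ sends an $A_\infty$ category $\cC$ to the ordinary graded category $H^*\cC$ with the same objects and with morphism spaces $H^*\Hom_\cC(X,Y)$, and composition induced by $m_2$. Any $A_\infty$ functor $F$ induces a graded functor $H^*F$ via $H^*(F_1)$; the higher components $F_k$ for $k \geq 2$ are null-homotopic contributions at the chain level and drop out on cohomology. Applying $H^*$ to the $A_\infty$ equivalence $\Rep_n(\Lambda,\coeffs) \stackrel{\sim}{\to} \Sh_n(\Lambda,\coeffs)$ provided by Conjecture~\ref{conj:main} yields a functor $H^*\Rep_n(\Lambda,\coeffs) \to H^*\Sh_n(\Lambda,\coeffs)$ which, by conditions (a) and (b) above, is fully faithful and essentially surjective, hence an equivalence of (ordinary) categories.

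Since the deduction is mechanical, I would keep the written proof to a single short paragraph citing the definition of $A_\infty$ equivalence and invoking Conjecture~\ref{conj:main}. The only conceivable sticking point is a bookkeeping one, namely specifying the flavor of $A_\infty$ equivalence being used (strict quasi-equivalence versus existence of a quasi-inverse $A_\infty$ functor); either version suffices, as both are well known to induce equivalences on $H^*$, and the categories $\Rep_n$ and $\Sh_n$ as defined here are cohomologically unital, so no additional hypotheses are required.
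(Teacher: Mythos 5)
Your proposal is correct and matches the paper's intent exactly: the paper offers no independent argument for Corollary~\ref{cor:main} beyond the one-line remark that it is ``immediately implied'' by Conjecture~\ref{conj:main}, and your write-up simply unpacks why that implication is formal (an $A_\infty$ equivalence is, by definition, a quasi-isomorphism on hom complexes plus essential surjectivity on $H^*$, so applying $H^*$ yields a fully faithful, essentially surjective functor of ordinary graded categories). You are also right to flag that this remains conditional on the conjecture; the paper itself only establishes the special case of Theorem~\ref{thm:princ} for the links $\Lambda_m$, and that proof proceeds by direct computation rather than through Conjecture~\ref{conj:main}.
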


In this paper, we will not attempt to prove Conjecture~\ref{conj:main} or even Corollary~\ref{cor:main} in general. However, we will 
prove the following special case of Corollary~\ref{cor:main}.

\begin{thm}
For $m \geq 1$, 
\label{thm:princ}
let $\Lambda_m$ be the Legendrian $(2,m)$ torus link given by the rainbow closure of the braid $\sigma_1^m\in B_2$, as shown in Figure~\ref{fig:2mknot}, equipped with its standard binary Maslov potential. Then the cohomology categories $H^*\Rep_n(\Lambda,\coeffs)$ and $H^*\Sh_n(\Lambda,\coeffs)$ are equivalent.
\end{thm}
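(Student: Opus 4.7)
The plan is to prove the theorem by explicit combinatorial computation of both categories, exploiting the fact that the front projection of $\Lambda_m$ has only two strands, $m$ crossings from the braid $\sigma_1^m$, and two pairs of cusps. Since only cohomology categories are at issue, I can work at the $H^*$ level throughout and need not track higher $A_\infty$ operations.

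First, I would write down Chekanov's combinatorial model for $(\A_{\Lambda_m}, \partial_{\Lambda_m})$ explicitly: the generators are the crossing chords $a_1,\dots,a_m$ together with the cusp chords, and the differential is a sum of monomials counting immersed polygons in the front. The binary Maslov potential forces all generators into two consecutive degrees, so an $n$-dimensional representation reduces to a tuple $(A_1,\dots,A_m) \in \Mat_n(\coeffs)^m$ (together with cusp data) satisfying explicit matrix equations. In parallel, the stratification of $\R^2$ by the front lets one describe objects of $\Sh_n(\Lambda_m,\coeffs)$ as families of $n$-dimensional vector spaces on each region connected by maps at the arcs and crossings, subject to the microlocal rank constraint. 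In both categories the combinatorics is simple enough that objects and morphisms admit a direct presentation.

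I would then construct a functor $\Phi \colon \Rep_n(\Lambda_m,\coeffs) \to \Sh_n(\Lambda_m,\coeffs)$ generalizing the rank-one assignment of \cite{NRSSZ}: send $\rho$ to the sheaf whose stalks realize the Maslov filtration and whose microlocal transition matrices across the $i$-th crossing are read off from $\rho(a_i)$. Checking that $\Phi$ induces a bijection on isomorphism classes of objects reduces to matching the gluing equations of the sheaf with $\rho \circ \partial_{\Lambda_m} = 0$. For morphism spaces I would give a bar-type description of $H^*\Hom_{\Rep_n}(\rho_1,\rho_2)$ built from the DGA generators and the two representations, and an analogous Mayer--Vietoris description of $\Ext^*_{\Sh_n}(\Phi\rho_1,\Phi\rho_2)$ adapted to the front stratification, then argue that the two complexes agree generator by generator. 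An induction on $m$, adding one crossing at a time, should reduce the core comparison to a single Legendrian Reidemeister-type local model handled once and for all.

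The hardest part, I expect, is the morphism comparison when $n \geq 2$. In the rank-one case the NRSSZ equivalence already packages this delicate argument using the Reidemeister invariance of $\Aug_+$, but for higher $n$ the non-commutativity of $\Mat_n(\coeffs)$ means the morphism spaces are genuine $\Mat_n(\coeffs)$-bimodules, so dimension counts and local-to-global assembly have to be reorganized accordingly. I would approach this by working locally at each crossing and cusp, showing that the local representation data and the local microlocal sheaf data are controlled by the same matrix variety, and then using the positive-braid structure of $\sigma_1^m$ (which guarantees that these local models can be chained together without obstruction) to assemble the local equivalences globally. If this approach fails to capture the composition maps correctly, a fallback would be to factor $\Phi$ through a quiver-representation category attached to $\Lambda_m$ whose morphisms match those on each side by explicit calculation.
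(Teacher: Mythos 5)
Your overall plan — explicitly compute both categories from the combinatorics of the front/Lagrangian projection, describe objects by $m$ matrices, and build an explicit functor — is in the same spirit as what the paper does, and the object-level identification you sketch (matching the gluing equations of the sheaf against $\rho\circ\partial=0$, which in the paper becomes the condition that $P_m(A_1,\dots,A_m)$ be invertible) is essentially correct.  The paper's functor on objects is $(A_1,\dots,A_m)\mapsto (A_1^T,\dots,A_m^T)$, and on degree-zero and degree-one morphisms it inserts transposes and a sign; you would need to discover this normalization, but that is a computation, not an idea.

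The genuine gap in your proposal is that you never address why the higher $\Ext$ groups in $\Sh_n(\Lambda_m,\coeffs)$ vanish.  On the $\Rep_n$ side the morphism complex is concentrated in degrees $0,1,2$ for degree reasons, and $H^2=0$ follows from a short computation (the differential on degree-one generators surjects onto degree two).  On the sheaf side, however, $\Ext^i(\cF,\cG)$ could a priori be nonzero for all $i\geq 2$; establishing $\Ext^{\geq 2}=0$ occupies the paper's entire Section~\ref{sec:vanishing-ext_2}, via a \v Cech argument with a hexagonal tiling adapted to the front, a combinatorial ``peel off the $\rotatey$ subgraphs'' surjectivity argument, and a local-to-global spectral sequence combined with a model computation for unknots to show $R^q\sHom(\cF,\cG)=0$ for $q\geq1$.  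Matching $H^0$ and $H^1$ alone, as you propose, does not yield an equivalence of cohomology categories without this vanishing; this is a real missing step, not a detail.

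A secondary issue is the proposed induction on $m$ ``adding one crossing at a time, reduced to a single Legendrian Reidemeister-type local model.''  Different values of $m$ give non-isotopic links, so there is no Reidemeister move relating $\Lambda_m$ to $\Lambda_{m+1}$ and no clear inductive step of the form you describe.  The paper instead encodes the crossing structure in the recursively defined polynomials $P_m$, $Q_m$ and handles the global object and morphism computations directly; the recursion lives in the algebra, not in a sequence of Legendrian isotopies.  Your ``fallback'' via a quiver category is closer to what actually happens on the sheaf side (objects become configurations of vector spaces and linear maps over the stratification), but even that route still needs the $\Ext^{\geq 2}$ vanishing and a correct treatment of $\Ext^1$ as Yoneda extensions, which the paper carries out explicitly.
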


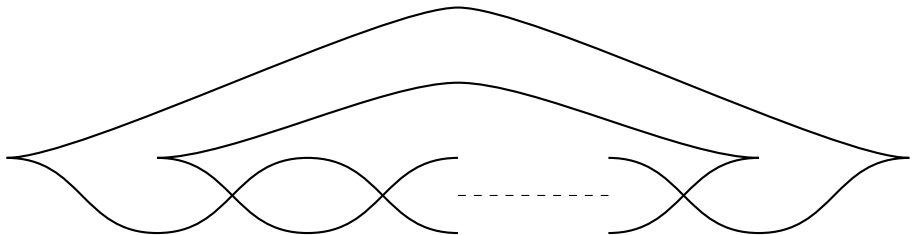
\begin{figure}[ht!]
  \centering
\begin{tikzpicture}
    \draw [thick] (0,1) .. controls +(1,0) and +(-1,0)
    .. (2,0) ..  controls +(1,0) and +(-1,0) .. (4,1); \draw [thick] (6,1)  .. controls +(1,0) and +(-1,0) .. (8,0) .. controls +(1,0) and +(-1,0) ..  (10,1)  .. controls +(-1,0) and +(1,0) ..  (4,3)  .. controls +(-1,0) and +(1,0) ..  (-2,1)  .. controls +(1,0) and +(-1,0) ..  (0,0) .. controls +(1,0) and +(-1,0) .. (2,1) ..  controls
    +(1,0) and +(-1,0) .. (4,0);
\draw [thick] (0,1) .. controls +(1,0) and +(-1,0) .. (4,2) ..  controls
    +(1,0) and +(-1,0) .. (8,1).. controls +(-1,0) and +(1,0) .. (6,0);
\path
(4,0.5) edge [dashed] (6,0.5);
  \end{tikzpicture}
  
  \caption{The front projection of the Legendrian link $\Lambda_m$, given as the rainbow closure of the $2$-braid $\sigma_1^m$.}
  \label{fig:2mknot}
\end{figure}

To prove Theorem~\ref{thm:princ}, we will explicitly compute the representation and (cohomological) sheaf categories associated to $\Lambda_m$ in a fair amount of detail, and show that they are identical. The computation on the sheaf side is likely to seem excessively detailed to a reader who is comfortable with the underlying sheaf theory. However, one of the goals of this paper is to break down the abstract sheaf calculation for $\Lambda_m$ into computations that essentially reduce to elementary linear algebra, with the hope of providing a concrete entry point into the sheaf theory for contact geometers.

\begin{rem}
Ideally, we would prove that there is an $A_\infty$ equivalence between $\Rep_n(\Lambda_m)$ and $\Sh_n(\Lambda_m)$ themselves, instead of only working at the level of cohomology.  However, our computation does not descend to this level because we compute $\Ext$ groups in $H^*(\Sh_n(\Lambda_m))$ in terms of extensions rather than working directly with complexes in $\Sh_n(\Lambda_m)$. One could hope to prove the desired equivalence by computing the appropriate Hochschild cohomology groups of these categories: if they are formal then the dg and $A_\infty$ structures are irrelevant for these examples, and if not then the deformation theory of these categories may still be simple enough to deduce their equivalence, by the sort of strategy outlined in \cite{Seidel-ICM}.
\end{rem}

\begin{rem}
It should be possible to define the representation category $\Rep_n$ for Legendrian submanifolds in arbitrary $1$-jet spaces. This would follow the definition of the positive augmentation category $\Aug_+$ for these Legendrians, which as of now has not been completely written down, though see \cite{EL} for work in this direction. We would then expect Conjecture~\ref{conj:main} to hold for Legendrians in all $1$-jet spaces.

Note also that there is no particular reason to separate sheaves by their microlocal rank, just as there is no reason to only consider $n$-dimensional representations of $(\A_\Lambda,\partial_\Lambda)$ for a single value of $n$.  One could consider the category $\Sh_{<\infty}(\Lambda,\coeffs)$ of all sheaves with finite microlocal rank, and similarly build a category $\Rep_{<\infty}(\Lambda,\coeffs)$ of finite-dimensional representations.  The computations in the present paper should extend to this more general setting with no extra difficulties other than careful bookkeeping.  However, we will stick to comparing $\Rep_n$ and $\Sh_n$ for fixed $n$, as this is already enough to motivate Conjecture~\ref{conj:main} and suggest that our construction of $\Rep_n$ is the correct one.
\end{rem}

\subsection{Geometric motivation}

Here we discuss some geometric intuition behind Conjecture~\ref{conj:main}. Our discussion is intended to provide some heuristic motivation and is consequently a sketch omitting full technical details.

First note that an exact Lagrangian filling $L$ of a Legendrian submanifold $\Lambda$ induces an augmentation of the DGA for $\Lambda$. By arguments of Seidel and Ekholm \cite{Ekholm_FloerlagCOnt}, there is an exact triangle relating:
the wrapped Floer homology of $L$, $\HW(L)$, as defined in \cite{WrappedFuk}; the linearized contact homology of $\Lambda$; and the homology of $L$, computed as the infinitesimally wrapped Floer homology of $L$.
Hence when $\HW(L)$ vanishes, we have the so-called Seidel--Ekholm--Dimitroglou-Rizell isomorphism between linearized contact homology and $H_*(L)$. This occurs for instance when $L$ is in the symplectization of the contact manifold, or more generally $L$ lies in a Weinstein filling and avoids its Lagrangian skeleton, see \cite[Proposition 7.6]{generation}. 
In \cite[Theorem 4]{EL} it is shown that when $\HW(L)=0$ this isomorphism lifts to the level of categories, leading an equivalence between some version of the augmentation category of $\Lambda$ and the infinitesimally wrapped Fukaya category.

On the sheaf side, consider the following example: 
let $N$ be a submanifold of a manifold $Q$ with smooth boundary $\partial \overline{N}$ such that $N=\{f > 0\}$ and $\partial \overline{N}=\{f=0\}$ for a smooth function $f$. The boundary at infinity of the microsupport of the constant sheaf on $N$ (i.e.  $i_*\C_N$, where $i :\thinspace N \to Q$ is inclusion) is the cosphere bundle of $\partial \overline{N}$, and its derived self hom space is the de Rham complex $\Omega^*(N)$ (see \cite[Lemma 4.4.1]{NaZa}).   The graph of $d(\ln f\vert_N)$ is a Lagrangian filling of this cosphere bundle, and it has the topology of $N$, so its Floer homology is quasi-isomorphic to the hom space in the sheaf category. This is a particularly relevant example: it follows from work of Nadler--Zaslow that such sheaves generate the full category $\Sh(Q)$, which allows them to prove that the category of sheaves corresponds to the derived infinitesimally wrapped category (see \cite{NaZa} and \cite{Nadler_brane}). 

Combining the two sides, we expect that the augmentation and sheaf categories for $\Lambda$ should be equivalent when $\HW = 0$. This last condition is why Conjecture~\ref{conj:main} is stated for $\Lambda$ in $\mathbb{R}^3$ (which might more generally be replaced by a $1$-jet space), as opposed to a unit cotangent bundle $ST^*Q$ as is more natural for the sheaf picture.

Now instead of exact Lagrangian fillings of $\Lambda$, suppose that we consider exact Lagrangian fillings $L$ equipped with rank $n$ local systems. On the sheaf side, this leads to the category $\Sh_n(\Lambda,\coeffs)$. On the DGA side, the filling $L$ produces a ``universal'' augmentation from the DGA $(\A_\Lambda,\partial_\Lambda)$ to the group ring $\Z[\pi_1(L)]$; this is the standard augmentation corresponding to $L$, which counts holomorphic disks with boundary on $L$ and positive end at a Reeb chord of $\Lambda$, but now each such disk contributes the homotopy class of its boundary. Composing this with a rank $n$ local system on $L$ leads to an $n$-dimensional representation of $(\A_\Lambda,\partial_\Lambda)$. Collecting these representations into a category yields the representation category $\Rep_n(\Lambda,\coeffs)$. As before, one could then guess that the representation and sheaf categories should be equivalent, and this is the content of Conjecture~\ref{conj:main}.

\begin{rem}
One caveat: the discussion above would produce a correspondence between derived categories, whereas Conjecture~\ref{conj:main} is for the original $A_\infty$ categories before triangulation. However, the main result of \cite{NRSSZ} establishes Conjecture~\ref{conj:main} for $n=1$ without passing to the derived categories, and we believe that this should continue to hold for arbitrary $n$. We note that the methods of \cite{NRSSZ} do not immediately generalize to the setting $n \geq 2$; one key difference is that the local computations in that paper rely on a classification of simple representations of a particular quiver algebra that does not extend to higher rank representations.
\end{rem}

One should be able to define an explicit $A_\infty$ functor from $\Rep$ to $\Sh$ yielding the equivalence in Conjecture~\ref{conj:main}. 
This builds on a construction of Viterbo for Lagrangians in cotangent bundles, 
and a map along these lines has been suggested independently by F.\ Bourgeois and V.\ Shende.
Roughly speaking, the idea is as follows: given an $n$-dimensional representation $\rho$ of the DGA of a Legendrian $\Lambda \subset J^1(Q) \subset ST^*(Q\times\R)$, for any $q_0 \in Q\times\R$ one can consider the linearized contact homology complex generated by Reeb chords from $\Lambda$ to the Legendrian cotangent fiber over $q_0$, linearized with respect to $\rho$. Then we map $\rho$ to a sheaf whose stalk over $q_0$ is this complex, which has microsupport on $\Lambda$ and microlocal rank $n$.

Note that this construction works for Legendrians in $ST^*(Q\times\R)$ and not just $J^1(Q)$. However, to extend the map on objects to a map on morphisms, one again needs some vanishing condition on $\HW$, as is guaranteed e.g.\ by restricting to $J^1(Q)$.
Without this vanishing, as an example, the conormal in $ST^*\R^2$ of a point in $\R^2$ is the microsupport of the skyscraper sheaf on the point whose hom space is $1$-dimensional (corresponding to the homology of the disk filling), whereas the corresponding hom space in the augmentation category is $2$-dimensional (the homology of $S^1$); the difference comes from the fact that the cotangent fiber has nonzero wrapped homology in the sector $T^*D^2$, see \cite{GPS}. 

Much of the preceding discussion is speculative both on the analytic side (for general Weinstein manifolds, thought of as the cotangent of arboreal singularities) and on the algebraic side. Note that recently the sheaf formulation of wrapped Floer homology (as suggested in \cite{wrappedsheaves} for an approach) has been used in \cite[Section 6.4]{GPS_Morsecat} to prove an equivalence between some sheaf category and modules over an algebra which is conjectured in \cite{EL} to be related to some version of the Chekanov--Eliashberg algebra. It is not currently fully clear to us what the precise relation is between Conjecture \ref{conj:main} and the results from \cite{GPS_Morsecat}.

\subsection{Organization}

The remainder of the paper is organized as follows. In Section~\ref{sec:repr-categ-legendr} we recall the relevant construction from \cite{CDGG}, and give details of the construction of $\Rep_n$ and its invariance under Legendrian isotopy (following \cite{NRSSZ}). Then in Section~\ref{sec:repr-categ-2} we proceed to compute this category for a family of Legendrian $(2,m)$ torus links. In Section~\ref{sec:category-shlambda} we recall the construction of the categories $\Sh(\Lambda)$ and the definition of microlocal rank. In Section~\ref{sec:sheaf-comp} we compute most of the cohomology of the sheaf category for the same family of Legendrian knots, leaving the vanishing of $\Ext^2$ and higher to Section~\ref{sec:vanishing-ext_2}. Finally, in Section~\ref{sec:constr-equiv} we gather all of these computations together and prove Theorem \ref{thm:princ}.

\subsection*{Acknowledgments}
During our exploration of the sheaf category we benefited greatly from conversations with Steven Boyer, St\'ephane Guillermou, Dan Rutherford, Vivek Shende, David Treumann, Nicolas Vichery, and Eric Zaslow; we also thank the referee for helpful feedback. The first author is partially supported by the ANR projects COSPIN (ANR-13-JS01-0008-01) and QUANTACT (ANR-16-CE40-0017). The second author is partially supported by NSF grants DMS-1406371 and DMS-1707652. Some of the early work for this paper stemmed from discussions at the 2016 workshop ``Interactions of gauge theory with contact and symplectic topology in dimensions 3 and 4'' at the Banff International Research Station.

\section{The representation category of a Legendrian link}
\label{sec:repr-categ-legendr}

In this section we describe the representation category $\Rep_n(\Lambda,\coeffs)$ associated to a Legendrian link $\Lambda$ and prove some elementary properties of this $A_\infty$ category. The construction of the representation category is due to \cite{CDGG}, who mention it briefly in the context of a more general discussion of ``noncommutative augmentation categories''. Since this particular category is of central importance to this paper, we will review its definition in a way that we hope can be read in a relatively self-contained way. However, we will assume that the reader is familiar with Chekanov--Eliashberg differential graded algebras associated to Legendrian links, about which there is now an extensive literature; a particular treatment that is especially convenient for our discussion can be found in \cite{NRSSZ}.

\begin{rem}
The portion of \cite{CDGG} that is most relevant to this paper is \cite[Appendix A]{CDGG}, which discusses noncommutative augmentation categories associated to Legendrian links. There are two categories proposed there that both generalize the positive augmentation category $\Aug_+$ to $n$-dimensional representations, corresponding to two general constructions of $A_\infty$ categories (``Case I'', ``Case II'') presented in \cite[\S 4]{CDGG}. Case II produces a category denoted in \cite{CDGG} by $\Rep_+(\Lambda,n)$, and this uses the fact that $\Mat_n(\coeffs)$ is a Hermitian algebra; however, this category is not unital (see \cite[Remark A.3]{CDGG}) and we do not consider it here. Instead we use the Case I category denoted in \cite{CDGG} by $\Rep_+(\Lambda,S)$, in the special case where $S = \Mat_n(\coeffs)$. The treatment of $\Rep_+(\Lambda,S)$ in \cite{CDGG} is rather cursory and we will expand on its definition below.
\end{rem}

\subsection{Twisting by noncommutative augmentations}
\label{ssec:Ainfty}
Before defining the representation category, we review a general technique given in \cite{CDGG} for producing an $A_\infty$ algebra by dualizing a Chekanov--Eliashberg differential graded algebra (or, more generally, a semi-free DGA in the sense of \cite{NRSSZ}) equipped with an augmentation. When the augmentation takes values in a field $\coeffs$, this story is well-studied; in the context of Legendrian knots, see e.g. \cite{CKESW,BC,NRSSZ}. In \cite[section 4.1]{CDGG}, this dualizing procedure is generalized to the case when the augmentation takes values in a possibly noncommutative algebra over $\coeffs$, and this is the version that we review here.

Let $\coeffs$ be a field, and $\{c_1,\ldots,c_p\}$ a finite set. For an algebra $A$ over $\coeffs$ (which for us will typically be $\Mat_n(\coeffs)$), we will denote by $M_{A}$ the free $A$-$A$-bimodule with generating set $\{c_1,\ldots,c_p\}$. We assume that $M_{A}$ is equipped with a grading coming from assigning degrees to each of the $c_i$'s and that elements of the coefficient ring have degree $0$. We will consider the following tensor algebra:
\[
\A_A = \mathcal{T}_A(M_{A}) = \bigoplus_{n=0}^\infty M_{A}^{\boxtimes n},
\]
where $\boxtimes = \otimes_A$ (with $M_A^{\boxtimes 0} = A$) and multiplication in $\A_A$ is given by concatenation.

The grading on $M_{A}$ induces a grading on $\A_A$. A differential $\partial$ on $\A_A$ is a degree $-1$ map satisfying the (signed) Leibniz rule $\partial(xy) = (\partial x)y+(-1)^{|x|}x(\partial y)$ along with $\partial(\alpha) = 0$ for all $\alpha\in A$. This differential is called \textit{augmented} if all constant terms in $\partial$ vanish: more precisely, $\partial$ maps $\A_A$ to $\bigoplus_{n=1}^\infty M_{A}^{\boxtimes n} \subset \A_A$.

If $\A_A$ is equipped with an augmented differential $\partial$, we can dualize this differential graded algebra to an $A_\infty$ algebra.
Let $M^\vee$ denote the dual of $M_{A}$, $M^\vee = \Hom_{A-A}(M,A)$. An element of $M^\vee$ is uniquely determined by where it sends $c_1,\ldots,c_p$ in $A$, and so we may identify $M^\vee$ with the free $A$-module generated by $c_1^\vee,\ldots,c_p^\vee$, where if $\phi \in M^\vee$ satisfies $\phi(c_i) = a_i$ then $\phi$ is identified with $\sum a_i c_i^\vee \in M^\vee$.  We assign each generator the grading $|c_i^\vee| = |c_i|+1$, where $|c_i|$ is the grading of $c_i$ as an element of $\A$.

Given an augmented differential $\partial :\thinspace \A_A \to \A_A$, we can dualize the $k$-th order part of $\partial$ to obtain a $\coeffs$-multilinear map
\[
\mu_k :\thinspace (M^\vee)^{\otimes k} \to M^\vee,
\]
where the tensor product is over $\coeffs$ and not $A$. 

A $k$-th order term in $\partial(c_i)$,
\[
\partial(c_i) = \alpha_k c_{i_k} \alpha_{k-1} c_{i_{k-1}} \cdots \alpha_1 c_{i_1} \alpha_0 + \cdots
\]
with $\alpha_j \in A$, contributes a term to $\mu_k$:
\[
\mu_k(a_{i_1}c_{i_1}^\vee,\ldots,a_{i_k}c_{i_k}^\vee) = (-1)^\sigma(\alpha_k a_{i_k} \alpha_{k-1} a_{i_{k-1}} \cdots \alpha_1 a_{i_1} \alpha_0) c_i^\vee + \cdots,
\]
where the sign is determined by
\begin{equation} \label{eq:sigma}
\sigma = \frac{k(k-1)}{2} + \left(\sum_{p<q} |c_{i_p}^\vee||c_{i_q}^\vee|\right) + |c_{i_2}^\vee| + |c_{i_4}^\vee| + \cdots.
\end{equation}
Since $\partial^2=0$, the $\mu_k$ operations satisfy the $A_\infty$ relations; here we use the $A_\infty$ sign conventions of \cite[\S 2.3]{NRSSZ} (in particular, the Koszul sign rule) rather than those of \cite{CDGG}, and hence the signs are taken from \cite[\S 3.1]{NRSSZ}.

\begin{prop}
$(M^\vee,\{\mu_k\})$ is an $A_\infty$ algebra over $\coeffs$.
\end{prop}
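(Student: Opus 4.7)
The plan is to derive the $A_\infty$ relations for $\{\mu_k\}$ directly from $\partial^2 = 0$ on $\A_A$ by dualization. First I would verify that each $\mu_k$ is well-defined as a $\coeffs$-multilinear map on $(M^\vee)^{\otimes k}$: the prescribed formula sends a simple tensor $a_{i_1}c_{i_1}^\vee \otimes \cdots \otimes a_{i_k}c_{i_k}^\vee$ to an element of $M^\vee$ whose dependence on each scalar $a_{i_j} \in A$ enters only through the concatenation $\alpha_k a_{i_k}\alpha_{k-1} a_{i_{k-1}}\cdots \alpha_1 a_{i_1}\alpha_0$, which is $\coeffs$-linear in each $a_{i_j}$. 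Linearity in each slot of $(M^\vee)^{\otimes k}$ over $\coeffs$ (though not over $A$) then follows.

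The next step is the combinatorial match. Fix a generator $c_i$ and expand
\[
0 = \partial^2(c_i) = \sum \partial\bigl(\alpha_k c_{j_k}\alpha_{k-1}\cdots c_{j_1}\alpha_0\bigr),
\]
where the outer sum runs over the monomial terms of $\partial(c_i)$. By the signed Leibniz rule, the inner application of $\partial$ expands as a sum over the positions $r \in \{1,\dots,k\}$ at which $\partial$ acts on the letter $c_{j_r}$, producing a length-$\ell$ subword in its place and a total monomial of length $N := k+\ell-1$. Reading each such contribution through the identification $c_j \mapsto c_j^\vee$, it matches (with an appropriate scalar coefficient in $A$) precisely a term of the form $\mu_k\bigl(\mathrm{id}^{\otimes (r-1)}\otimes \mu_\ell \otimes \mathrm{id}^{\otimes (k-r)}\bigr)$ evaluated on the dual generators of the word. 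Grouping contributions to the coefficient of any fixed dual monomial, the vanishing of $\partial^2(c_i)$ translates exactly into the $A_\infty$ relation
\[
\sum_{k+\ell = N+1}\sum_{j=0}^{k-1} (-1)^{\dagger}\,\mu_k\bigl(\mathrm{id}^{\otimes j}\otimes \mu_\ell \otimes \mathrm{id}^{\otimes (k-j-1)}\bigr) = 0,
\]
where $\dagger$ is the Koszul-style $A_\infty$ sign from \cite[\S 3.1]{NRSSZ}.

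The main obstacle will be the sign bookkeeping. The sign $\sigma$ defined in \eqref{eq:sigma} and the degree shift $|c_i^\vee| = |c_i|+1$ are tailored precisely so that dualization intertwines the signs from the two applications of the Leibniz rule with the Koszul $A_\infty$ signs. I would handle this in two stages. First, in the special case $A = \coeffs$, the claim reduces to the classical bar-cobar duality for augmented DGAs, and the matching of signs is already carried out in \cite[\S 3.1]{NRSSZ}. Second, in the general noncommutative case, the elements $\alpha_j \in A$ and $a_{i_j} \in A$ all have degree $0$, so they neither contribute signs under the Koszul rule nor shift the grading of any adjacent generator. Hence the passage from $A = \coeffs$ to a general $A$ alters only the monomial factor of each contribution, not its sign. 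This reduces the full sign check to one that has already appeared in the literature, and assembling these ingredients yields the $A_\infty$ axioms.
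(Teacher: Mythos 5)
Your argument is essentially the content of the proof the paper outsources to a citation: the paper's own proof simply asserts that the claim ``follows exactly as in \cite[Theorem~4.11]{CDGG}, except that the signs have been determined using \cite[\S 3.1]{NRSSZ},'' and your sketch reconstructs what that cited theorem would do (dualize $\partial^2=0$ via the two-stage Leibniz expansion, match the degree-$0$ coefficients in $A$ so that they shift neither grading nor Koszul signs, and so reduce the sign check to the $A=\coeffs$ case already done in \cite[\S 3.1]{NRSSZ}). In short, your approach agrees with the paper's, just with the intermediate steps spelled out rather than cited.
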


\begin{proof}
This follows exactly as in \cite[Theorem~4.11]{CDGG}, except that the signs have been determined using \cite[\S 3.1]{NRSSZ}.
\end{proof}

The remainder of this subsection is devoted to a description of how to construct an augmented differential $\partial$ on $\A_A$ (and thus an $A_\infty$ algebra) from a general differential graded algebra equipped with an augmentation. (When $A = \coeffs$, this story is well-known, cf.\ \cite{Chekanov}). To this end, let $R$ be a second $\coeffs$-algebra, producing a tensor algebra $\A_R$ in the same way as the definition of $\A_A$, and suppose that we have a differential $\partial$ on $\A_R$ satisfying the Leibniz rule and sending $R \subset \A_R$ to $0$. We will consider augmentations to the first $\coeffs$-algebra $A$:

\begin{dfn}
An \textit{augmentation} of $(\A_R,\partial)$ to $A$ is a DGA map
\[
\epsilon:(\A_R,\partial)\rightarrow (A,0),
\]
i.e., a graded $\coeffs$-algebra map $\epsilon:\A_R\rightarrow A$ such that $\epsilon\circ\partial=0$.
\end{dfn}

\begin{rem}
To be specific, the case that will be of interest to us is when $(\A_R,\partial)$ is the Chekanov--Eliashberg DGA of a Legendrian link $\Lambda$ in $\R^3$ (more precisely, the fully noncommutative DGA with multiple base points, tensored with $\coeffs$). In this case the generating set $\{c_1,\ldots,c_p\}$ is the set of Reeb chords of the link, and the ring $R$ is the group ring $\coeffs[F_q]$ where $F_q$ is the free group with $q$ generators.  In other words, $\A_R$ is generated as a $\coeffs$-algebra by the Reeb chords $c_1,\ldots,c_p$ of $\Lambda$ along with elements $t_1^{\pm 1},\ldots,t_q^{\pm 1}$, where $t_i$ and $t_j$ do not commute for $i\neq j$. As usual for Legendrian contact homology, the grading on $\A_R$ comes from giving Reeb chords degrees corresponding to their Conley--Zehnder indices, and the differential $\partial :\thinspace \A_R \to \A_R$ counts holomorphic disks with boundary on $\R\times\Lambda$.

Furthermore, we will be primarily interested in the case $A = \Mat_n(\coeffs)$; then we will call an augmentation of $(\A_R,\partial)$ to $A$ a \textit{representation} of $\A_R$, and we will usually use $\rho$ instead of $\epsilon$ to denote the representation. See Definition~\ref{def:rep} below.
\end{rem}

Now suppose that we have a differential graded algebra $(\A_R,\partial)$ equipped with an augmentation $\epsilon$ to $A$. Then $\partial$ induces a differential $\partial_A$ on $\A_A$ by replacing all occurrences of $\gamma \in R$ in $\partial$ by $\epsilon(\gamma) \in A$, and $\epsilon$ then induces an augmentation of $(\A_A,\partial_A)$ to $A$. One can then twist $\partial_A$ by this augmentation to produce an augmented differential on $\A_A$.

We now work out this scheme in more detail.
Define a $\coeffs$-algebra map $\phi_\epsilon:\A_R\rightarrow \A_A$ by its action on generators:
\begin{align*}
\phi_\epsilon(c_i) &= c_i+\epsilon(c_i) & i&=1,\ldots,p \\
\phi_\epsilon(\gamma) &= \epsilon(\gamma) & \gamma &\in R \subset \A_R.
\end{align*}

We can then define a map $\partial_\epsilon :\thinspace \A_A \to \A_A$ by
\begin{align*}
\partial_\epsilon(c_i) &= \phi_\epsilon(\partial c_i) & i &= 1,\ldots,p \\
\partial_\epsilon(\alpha) &= 0 & \alpha &\in A,
\end{align*}
where $\partial_\epsilon$ is extended to all of $\A_A$ by the Leibniz rule.
We now have the following proposition, which by the preceding discussion allows us to produce an $A_\infty$ algebra from $(\A_A,\partial_\e)$.

\begin{prop}
  The map $\partial_\epsilon$ is an augmented differential on $\A_A$. \label{prop:augmented}
\end{prop}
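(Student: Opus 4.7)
The plan is to verify in turn that $\partial_\epsilon$ has degree $-1$, satisfies the Leibniz rule, squares to zero, and has no constant term. The first two properties are essentially tautological: since $\epsilon$ preserves degree, $\phi_\epsilon(c_i) = c_i + \epsilon(c_i)$ is a homogeneous element of degree $|c_i|$, so $\partial_\epsilon(c_i) = \phi_\epsilon(\partial c_i)$ has degree $|c_i|-1$; and the Leibniz rule holds by the very way $\partial_\epsilon$ is extended to $\A_A$. The content is in $\partial_\epsilon^2 = 0$ and the vanishing of constant terms.

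For the augmented property, I would observe that any monomial $\gamma_0 c_{i_1} \gamma_1 \cdots c_{i_k} \gamma_k$ appearing in $\partial c_i$ (with $\gamma_j \in R$) is sent by $\phi_\epsilon$ to $\epsilon(\gamma_0)(c_{i_1}+\epsilon(c_{i_1}))\epsilon(\gamma_1)\cdots (c_{i_k}+\epsilon(c_{i_k}))\epsilon(\gamma_k)$, whose projection onto the summand $M_A^{\boxtimes 0} = A$ is exactly $\epsilon$ applied to the original monomial. Summing over monomials, the constant term of $\partial_\epsilon(c_i) = \phi_\epsilon(\partial c_i)$ equals $\epsilon(\partial c_i) = 0$ by the augmentation condition on $\epsilon$.

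The main step is $\partial_\epsilon^2 = 0$, and I would handle it not by a direct unsigned computation (where the most annoying piece would be bookkeeping signs under the Leibniz extension) but via a conjugation argument. Introduce the graded $\coeffs$-algebra automorphism $\Phi_\epsilon \colon \A_A \to \A_A$ defined on generators by $\Phi_\epsilon(c_i) = c_i + \epsilon(c_i)$ and $\Phi_\epsilon|_A = \mathrm{id}_A$, whose inverse sends $c_i$ to $c_i - \epsilon(c_i)$. Also introduce the surjective DGA map $\pi_\epsilon \colon \A_R \to \A_A$ that replaces every $\gamma \in R$ appearing in a word by $\epsilon(\gamma) \in A$ while fixing each $c_i$; this transports $\partial$ to a well-defined differential $\partial_A := \pi_\epsilon \circ \partial \circ (\text{section})$ on $\A_A$ characterized by $\partial_A \circ \pi_\epsilon = \pi_\epsilon \circ \partial$, and $\partial_A^2 = 0$ follows immediately from $\partial^2 = 0$ together with surjectivity of $\pi_\epsilon$.

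Finally, I would check on generators that $\partial_\epsilon = \Phi_\epsilon \circ \partial_A \circ \Phi_\epsilon^{-1}$: indeed $\Phi_\epsilon^{-1}(c_i) = c_i - \epsilon(c_i)$, so $\partial_A \Phi_\epsilon^{-1}(c_i) = \partial_A(c_i) = \pi_\epsilon(\partial c_i)$, and then $\Phi_\epsilon \pi_\epsilon(\partial c_i) = \phi_\epsilon(\partial c_i) = \partial_\epsilon(c_i)$, since $\Phi_\epsilon \circ \pi_\epsilon = \phi_\epsilon$ from the definitions. Both sides are derivations vanishing on $A$, so they agree everywhere. Then
\[
\partial_\epsilon^2 = \Phi_\epsilon \circ \partial_A^2 \circ \Phi_\epsilon^{-1} = 0,
\]
with no further sign manipulation needed. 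The only point requiring mild care is verifying that $\partial_A$ is well-defined and intertwines $\partial$ via $\pi_\epsilon$, which is where the augmentation condition $\epsilon \circ \partial = 0$ is implicitly used in combination with the fact that $\partial$ acts trivially on $R \subset \A_R$. This is the main (and only real) obstacle, and it is essentially formal.
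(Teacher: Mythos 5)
Your proof is correct. Where the paper argues in one stroke---observing that $\partial_\epsilon \circ \phi_\epsilon = \phi_\epsilon \circ \partial$ (checked on generators, both sides being compatible with products), then deducing $\partial_\epsilon^2(c_i) = \partial_\epsilon^2(\phi_\epsilon(c_i)) = \phi_\epsilon\partial^2(c_i) = 0$ and invoking Leibniz---you factor the algebra map $\phi_\epsilon = \Phi_\epsilon \circ \pi_\epsilon$ into a surjective ``substitution'' map $\pi_\epsilon\colon\A_R\to\A_A$ and a graded automorphism $\Phi_\epsilon$ of $\A_A$, then exhibit $\partial_\epsilon$ as the conjugate $\Phi_\epsilon\circ\partial_A\circ\Phi_\epsilon^{-1}$ of the ``pushed-forward'' differential $\partial_A$. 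Your version makes the proof of $\partial_\epsilon^2 = 0$ structurally transparent: $\partial_A^2 = 0$ follows from surjectivity of $\pi_\epsilon$, and conjugation by an automorphism preserves the property of squaring to zero, so no further generator-level bookkeeping is needed. The paper's argument reaches the same conclusion slightly faster at the cost of a small hidden step (the first equality $\partial_\epsilon^2(c_i) = \partial_\epsilon^2(\phi_\epsilon(c_i))$ uses that $\phi_\epsilon(c_i)-c_i\in A$ lies in the kernel of $\partial_\epsilon$, since $\phi_\epsilon$ is neither injective nor surjective). Both proofs identify the constant term of $\partial_\epsilon(c_i)$ as $\epsilon(\partial c_i) = 0$ in the same way. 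In short, same underlying insight---$\phi_\epsilon$ intertwines $\partial$ and $\partial_\epsilon$---but your decomposition into a surjection followed by an automorphism isolates exactly where the two ingredients (the DGA relation $\partial^2=0$ and the shift by $\epsilon$) enter, which some readers may find clarifying even if it is a bit longer.
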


\begin{proof}
First note that $\partial_\epsilon \circ \phi_\epsilon = \phi_\epsilon \circ \partial$: the two sides agree on generators of $\A_A$ by construction, and both sides behave the same way under products. It follows that $\partial_\epsilon^2(c_i) = \partial_\epsilon^2(\phi_\epsilon(c_i)) = \phi_\epsilon \partial^2(c_i) = 0$, and so $\partial_\epsilon^2 = 0$ on all of $\A_A$ since $\partial_\epsilon$ satisfies the Leibniz rule. Furthermore, the constant term of $\partial_\epsilon c_i$ is by construction $\epsilon(\partial c_i)$, which is $0$ since $\epsilon$ is an augmentation; thus $\partial_\epsilon$ is augmented.
\end{proof}

In what follows, we will need to compare the $A_\infty$ structures arising from equivalent (``stable tame'') DGAs. To do this, we observe that the above construction of an augmented differential $\partial_\e$ from an augmentation $\e$ is natural, in the following sense. Suppose that we have a DGA map $f :\thinspace (\A_R,\partial) \to (\A'_R,\partial)$, where $\A_R,\A'_R$ are both tensor algebras but with possibly different generating sets $\{c_1,\ldots,c_p\}, \{c_1',\ldots,c_{p'}'\}$; in particular, $f$ restricts to the identity map on $R$. Further suppose that we have an augmentation $\epsilon'$ of $(\A_R',\partial)$ to $A$; this induces an augmentation $\epsilon = \epsilon' \circ f$ of $(\A_R,\partial)$ to $A$. We construct maps $\phi_\e :\thinspace \A_R \to \A_A$, $\phi_{\e'} :\thinspace \A_R' \to \A_A'$, $\partial_\e :\thinspace \A_A \to \A_A$, $\partial_{\e'} :\thinspace \A_A' \to \A_A'$ as above, where $\A_A,\A_A'$ have the same generating sets as $\A_R,\A'_R$ respectively.

Define a $\coeffs$-algebra morphism $f_A :\thinspace \A_A \to \A_A'$ by
\begin{align*}
f_A(c_i) &= \phi_{\e'}f(c_i)-\e(c_i) & i&=1,\ldots,p \\
f_A(\alpha) &= \alpha & \alpha &\in A.
\end{align*}
We claim that $f_A$ intertwines the differentials $\partial_\e$ and $\partial_{\e'}$.
Indeed, note that we have $f_A \circ \phi_\e = \phi_{\e'} \circ f$ since these two compositions agree on generators of $\A_R$:
\[
f_A \phi_\e(c_i) = f_A(c_i+\e(c_i)) = \phi_{\e'}f(c_i)-\e(c_i)+\e(c_i) = \phi_{\e'}f(c_i)
\]
and $f_A \phi_\e(\gamma) = \e(\gamma) = \phi_{\e'} (\gamma) = \phi_{\e'}f(\gamma)$ for $\gamma \in R$. As in the proof of Proposition~\ref{prop:augmented}, we have $\partial_{\e'} \circ \phi_{\e'} = \phi_{\e'} \circ \partial$, whence
\[
f_A \partial_\e(c_i) = f_A \phi_\e \partial (c_i) = \phi_{\e'} f \partial(c_i) = \phi_{\e'} \partial f(c_i) = \partial_{\e'} \phi_{\e'} f(c_i) = \partial_{\e'} f_A(c_i),
\]
and we conclude that $f_A \circ \partial_\e = \partial_{\e'} \circ f_A$.

Now the constant term in $f_A(c_i)$ is $\e'f(c_i)-\e(c_i) = 0$, and so the map
$f_A$ has no constant terms: that is, the component of $f_A\vert_{M_A}$ mapping to $(M_A')^{\boxtimes 0}$ is $0$. Thus we can write $f_A\vert_{M}
=f_1\oplus f_2\oplus\cdots\oplus f_k\oplus\cdots$ where each $f_k$ has image in $(M_A')^{\boxtimes k}$. Dualizing all the $f_i$ as in the construction of the $\mu_k$, we get a family of maps $\{F_k\}_{k\in \mathbb{N}}$ which forms an $A_\infty$ algebra morphism $((M')^\vee,\{\mu_k'\}) \to (M^\vee,\{\mu_k\})$. The whole construction is functorial, so that if $f$ is an isomorphism then it induces a isomorphism of $A_\infty$ algebras.
\subsection{The representation category}

Following \cite{BC,CDGG}, one can expand the construction from Section~\ref{ssec:Ainfty} to construct an $A_\infty$ category whose objects are augmentations $\epsilon :\thinspace \A \to A$, such that the self-homs of $\epsilon$ form the $A_\infty$ algebra given above. In the case where $A = \Mat_n(\coeffs)$, this produces an $A_\infty$ category that we might term the ``negative representation category'', following the terminology of \cite{NRSSZ}. We will instead be interested in a ``positive'' version of the representation category, which requires a bit more work to set up and which we will now describe. Throughout this subsection, we assume that $A = \Mat_n(\coeffs)$, though the discussion works for more general algebras as well. In the case $n=1$, the representation category $\Rep_1(\Lambda,\coeffs)$ will precisely be the positive augmentation category $\Aug_+(\Lambda,\coeffs)$ from \cite{NRSSZ}.

\begin{dfn}
Let $(\A,\partial)$ be the Chekanov--Eliashberg DGA of a Legendrian link $\Lambda$ equipped with base points. An $n$-dimensional \textit{representation} of $(\A,\partial)$ is an augmentation $\rho :\thinspace \A \to \Mat_n(\coeffs)$, meaning a homomorphism of graded $\coeffs$-algebras (here $\Mat_n(\coeffs)$ lives entirely in grading 0) with $\rho\circ\partial=0$.
\label{def:rep}
\end{dfn}

\begin{rem}
One can more generally consider DGA maps from $(\A,\partial)$ to $(\End V,\delta)$, where $V$ is a graded $\coeffs$-vector space and $\delta$ is induced from a differential on $V$. The representations that we consider in this paper correspond to $\delta=0$ and $V = \coeffs^n$ supported in degree $0$. The general DGA maps are of interest when studying augmentations of satellites \cite{LR}; one can similarly construct a category out of these, but to simplify the exposition we omit the details here.
\end{rem}

Given a Legendrian link $\Lambda$ with base points $\ast_1,\dots,\ast_q$, we can now define the representation category $\Rep_n(\Lambda,\coeffs)$, fleshing out the definition in \cite{CDGG}. 
Let $c_1,\ldots,c_p$ be the Reeb chords of $\Lambda$, and let $(\A,\partial)$ be the Chekanov--Eliashberg DGA of $\Lambda$.  Note again that $\A$ has invertible generators $t_1^{\pm1},\dots,t_q^{\pm1}$ for each base point which do not commute with the $c_i$ or with each other, except that $t_i t_i^{-1} = t_i^{-1} t_i = 1$.

\begin{dfn} \label{def:rep_n}
The representation category $\Rep_n(\Lambda,\coeffs)$ is the $A_\infty$ category over $\coeffs$ defined as follows.
\begin{enumerate}
\item
The objects of $\Rep_n(\Lambda,\coeffs)$ are $n$-dimensional representations of $(\A,\partial)$.
\item
Given two objects $\rho_1,\rho_2$, the morphism space $\Hom(\rho_1,\rho_2)$ is the free graded $\Mat_n(\coeffs)$-module generated by $c_1^\vee,\ldots,c_p^\vee,x_1^\vee,y_1^\vee,\dots,x_q^\vee,y_q^\vee$, the formal duals to the Reeb chords of $\Lambda$ along with $2q$ additional generators.  These have gradings $|c_i^\vee| = |c_i|+1$, $|x_j^\vee|=1$, and $|y_j^\vee|=0$.
\item
We have $\coeffs$-multilinear maps
\[
\mu_k :\thinspace \Hom(\rho_{k-1},\rho_k) \otimes \cdots \otimes \Hom(\rho_1,\rho_2) \otimes \Hom(\rho_0,\rho_1) \to \Hom(\rho_0,\rho_k)
\]
defined by dualizing the augmented differential of the $(k+1)$-copy of $\Lambda$ with respect to the pure augmentation $\epsilon=(\rho_0,\ldots,\rho_k)$, as in the previous subsection.
\end{enumerate}
\end{dfn}
To be more precise, we construct $\mu_k$ by using the \emph{Lagrangian projection $(k+1)$-copy} $\Lambda^{k+1}_f$ of $\Lambda$ as described in \cite[\S 4.2.2]{NRSSZ}. Here we choose $q$ base points $\ast_1,\ldots,\ast_q$ on $\Lambda$ with at least one base point on each component of $\Lambda$, and then choose a Morse function $f$ on $\Lambda$ with $2q$ critical points consisting of $1$ minimum and $1$ maximum in a small neighborhood of each $\ast_j$. We then use $f$ to produce $k+1$ parallel copies of $\Lambda$ by perturbing $\Lambda$ in the normal direction according to $f$.
  
Each $x_j^\vee$ and $y_j^\vee$ correspond to crossings between different components of the $(k+1)$-copy near $\ast_j$ at the maximum and minimum, respectively.  Labeling the copies $\Lambda_1,\dots,\Lambda_{k+1}$ from top to bottom and letting $z$ be any of $c_i,x_j,y_j$, we let $z^{i,j}$ denote the corresponding Reeb chord of $\Lambda_f^{k+1}$ from $\Lambda_j$ to $\Lambda_i$.  Each $\Lambda_i$ has base points $\ast_1^i,\dots,\ast_q^i$.

The DGA of $\Lambda^{k+1}_f$ is described explicitly by \cite[Proposition~4.14]{NRSSZ}, which we restate below as Proposition~\ref{prop:copydga}, and the pure augmentation $\epsilon$ satisfies $\epsilon(t_j^i) = \rho_i(t_j)$, $\epsilon(c_l^{i,j}) = \rho_i(c_l)$ if $i=j$, and $\epsilon(z)=0$ for all other generators.  Twisting the differential of this DGA by the pure augmentation $\epsilon$, each $k$-th order term of the form
\[ \partial^\epsilon(w^{1,k+1}) = \alpha_kz_k^{1,2}\alpha_{k-1}z_{k-1}^{2,3}\cdots\alpha_1z_1^{k,k+1}\alpha_0 + \dots \]
then contributes
\[ \mu_k(a_{1}z_1^\vee,\dots,a_{k}z_k^\vee) = (-1)^\sigma(\alpha_ka_k\alpha_{k-1}a_{k-1}\cdots \alpha_1 a_{1}\alpha_0)w^\vee + \cdots \]
with the sign $(-1)^\sigma$ as in equation~\eqref{eq:sigma}.

\begin{prop} \label{prop:its-a-category}
$(\Rep_n(\Lambda,\coeffs),\{\mu_k\})$ is a unital $A_\infty$ category over $\coeffs$.
\end{prop}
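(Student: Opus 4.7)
The plan is to adapt the argument in \cite[\S 4]{NRSSZ} that establishes the analogous statement for $\Aug_+(\Lambda,\coeffs) = \Rep_1(\Lambda,\coeffs)$, upgrading each step to allow representations valued in $A = \Mat_n(\coeffs)$. Most of the noncommutative bookkeeping required for this upgrade has already been set up in Section~\ref{ssec:Ainfty}: the observation is that twisting the DGA of $\Lambda^{k+1}_f$ by the pure augmentation $\epsilon = (\rho_0,\ldots,\rho_k)$ with values in $A$ produces, via Proposition~\ref{prop:augmented}, an augmented differential $\partial_\epsilon$, and hence, via the dualization procedure of Section~\ref{ssec:Ainfty}, an $A_\infty$ algebra structure on the dual of the free $A$-bimodule spanned by the Reeb chords and Morse generators of $\Lambda^{k+1}_f$. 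The $\mu_k$ of the proposition are then read off from this $A_\infty$ algebra by extracting the directed components described in the excerpt.

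To verify the $A_\infty$ relations on a composable sequence $\rho_0,\ldots,\rho_k$, I would exploit the link grading on the DGA of $\Lambda^{k+1}_f$ indexed by ordered pairs in $\{1,\ldots,k+1\}^2$. Expanding $\partial_\epsilon^2(w^{1,k+1}) = 0$ and extracting the coefficient of a monomial of the form
\[ \alpha_k z_k^{1,2} \alpha_{k-1} z_{k-1}^{2,3} \cdots \alpha_1 z_1^{k,k+1} \alpha_0 \]
should produce, after dualization and the sign rule~\eqref{eq:sigma}, exactly the $A_\infty$ relation for the $\mu_\bullet$ operations applied to these arguments. This is the same extraction carried out in \cite{NRSSZ} for $n=1$; the only new feature is that the coefficients $\alpha_i$ now lie in $\Mat_n(\coeffs)$ and do not commute, but the noncommutative dualization in Section~\ref{ssec:Ainfty} was arranged precisely so that the order in which these coefficients appear is preserved throughout.

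For unitality, the unit $e_\rho \in \Hom(\rho,\rho)$ would be defined as $\sum_{j=1}^q \Id_n \cdot y_j^\vee$, using the degree $0$ generators dual to the Morse minima near the base points of $\Lambda$. Verifying that $e_\rho$ is a strict unit --- that is, $\mu_2(e_\rho,\alpha) = \alpha$, $\mu_2(\alpha, e_\rho) = (-1)^{|\alpha|}\alpha$, and $\mu_k(\ldots, e_\rho, \ldots) = 0$ for $k \neq 2$ --- reduces to a local disk count near each base point in $\Lambda^{k+1}_f$. The relevant disks are exactly those already enumerated in \cite[\S 4]{NRSSZ} for the $n = 1$ case, and because $\Id_n$ is central in $\Mat_n(\coeffs)$ the noncommutativity of $A$ contributes nothing new to this step.

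The main obstacle is the sign bookkeeping in the second step: confirming that the signs produced by~\eqref{eq:sigma} in the noncommutative setting match, term by term, those appearing in the $A_\infty$ axiom. The sign conventions in Section~\ref{ssec:Ainfty} were chosen to mirror those of \cite[\S 3.1]{NRSSZ}, so I expect the verification to reduce to the same calculation already done there, but one must check that the noncommutativity of $\Mat_n(\coeffs)$ introduces no additional sign corrections beyond what is already encoded in the formula defining $\mu_k$.
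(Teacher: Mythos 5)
Your proposal follows the same overall route as the paper: adapt the NRSSZ machinery of consistent DGA sequences to the matrix-coefficient setting, as carried out in \cite[\S A.2.2]{CDGG}, and read the $A_\infty$ relations off of $\partial_\epsilon^2 = 0$ in the twisted $(k+1)$-copy DGA. The paper's actual proof is essentially a citation to \cite{CDGG} and \cite{NRSSZ} for both the $A_\infty$ relations and unitality, so your sketch is a more explicit version of the same argument rather than a different one.

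There is, however, a sign error in your description of the unit. With the sign conventions the paper uses, the $\mu_2$ computation (carried out explicitly in the proof of Proposition~\ref{prop:iso-criterion}) gives $\mu_2(uy_k^\vee, u'y_k^\vee) = -u'u\,y_k^\vee$, so the unit in $\Hom(\rho,\rho)$ is $e_\rho = -\sum_{j=1}^q y_j^\vee$, not $+\sum_{j=1}^q \Id_n \cdot y_j^\vee$. With your choice, $\mu_2(e_\rho,f) = -f$ and the element fails to be a unit. Your stated unitality axiom $\mu_2(\alpha,e_\rho) = (-1)^{|\alpha|}\alpha$ also does not match the convention in \cite[\S 3.1]{NRSSZ} that the paper adopts, where the strict unit acts without an extra sign on either side. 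Finally, you should be aware that closedness of $e_\rho$ (that is, $\mu_1(e_\rho)=0$) is not completely automatic; the paper cites \cite[Theorem~5.5]{EES} for this, and one can check it directly here by setting $\rho_1=\rho_2=\rho$ and $u_i = -\Id$ in the formula for $\mu_1(\sum u_iy_i^\vee)$ in the proof of Proposition~\ref{prop:iso-criterion}.
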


\begin{proof}
This category is defined as $\Rep_+(\Lambda,\Mat_n(\coeffs))$ in \cite[\S A.2.2]{CDGG} (but with sign conventions as in \cite{NRSSZ}).  There it is explained how the construction of a consistent sequence of DGAs from \cite{NRSSZ} adapts to the case where the objects are representations instead of just augmentations; then we can pass from this sequence to $\Rep_n(\Lambda,\coeffs)$ by \cite[Definition~3.16]{NRSSZ}, and it is an $A_\infty$ category by \cite[Proposition~3.17]{NRSSZ}. As observed in \cite[\S A.2.2]{CDGG}, the existence of a unit $e_\rho = -\sum_{j=1}^q y_j^\vee$ in each $\Hom(\rho,\rho)$ essentially follows from \cite[Proposition~3.28]{NRSSZ}; this unit is closed as a consequence of \cite[Theorem~5.5]{EES}.
\end{proof}

\begin{prop} \label{prop:invariance}
Fix $n$ and $\coeffs$.  Up to $A_\infty$ equivalence, the $A_\infty$ category $\Rep_n(\Lambda,\coeffs)$ does not depend on the choice of Morse function $f$.  Moreover, if $\Lambda$ and $\Lambda'$ are Legendrian isotopic, then $\Rep_n(\Lambda,\coeffs)$ and $\Rep_n(\Lambda',\coeffs)$ are $A_\infty$ equivalent.
\end{prop}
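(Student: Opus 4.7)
The plan is to mimic the proof of the analogous statement for $\Aug_+(\Lambda,\coeffs)$ in \cite[\S 3.4]{NRSSZ}, feeding the relevant DGA morphisms into the naturality construction of Section~\ref{ssec:Ainfty} in place of its augmentation-valued predecessor. The key observation is that the underlying DGAs of the Lagrangian projection $(k+1)$-copies are completely independent of $n$; the only dependence on $n$ enters through the coefficient algebra $A = \Mat_n(\coeffs)$ in which we take augmentations, and the dualization procedure of Section~\ref{ssec:Ainfty} was already formulated to accept arbitrary such $A$.

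For independence of the Morse function, any two admissible choices $f_0, f_1$ can be joined by a generic one-parameter family $f_t$, yielding a Legendrian isotopy of the $(k+1)$-copies $\Lambda^{k+1}_{f_t}$ through the space of Legendrian links while keeping $\Lambda$ itself fixed. This reduces the first assertion to the second. For Legendrian isotopy invariance, a Legendrian isotopy from $\Lambda$ to $\Lambda'$ extends, for each $k\ge 0$, to a Legendrian isotopy of the corresponding $(k+1)$-copies, and Legendrian invariance of the fully noncommutative Chekanov--Eliashberg DGA produces a stable tame isomorphism
\[
\Phi_k : (\A_{\Lambda^{k+1}_f}, \partial) \xrightarrow{\sim} (\A_{(\Lambda')^{k+1}_{f'}}, \partial)
\]
of DGAs over $R^{\otimes(k+1)}$, where $R = \coeffs[F_q]$. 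As in \cite{NRSSZ}, one arranges the $\Phi_k$ to be compatible with the natural inclusions between different $(k+1)$-copies, so that they assemble into a morphism of consistent sequences of DGAs.

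Given representations $\rho_0, \ldots, \rho_k$ of $(\A_\Lambda, \partial)$, pulling back through $\Phi_0^{-1}$ produces representations $\rho_0', \ldots, \rho_k'$ of $(\A_{\Lambda'}, \partial)$, and applying the naturality discussion at the end of Section~\ref{ssec:Ainfty} to $\Phi_k$ and the pure representation $\e' = (\rho_0', \ldots, \rho_k')$ yields an $A_\infty$ morphism between the dualized morphism complexes for $(\rho_0, \ldots, \rho_k)$ and $(\rho_0', \ldots, \rho_k')$. Because the $\Phi_k$ form a morphism of consistent sequences, these morphisms assemble, via the passage from consistent sequences to $A_\infty$ categories in \cite[Definition~3.16]{NRSSZ}, into an $A_\infty$ functor $\Rep_n(\Lambda,\coeffs) \to \Rep_n(\Lambda',\coeffs)$; invertibility of $\Phi_k$ up to stabilization makes this functor an $A_\infty$ quasi-equivalence, and the functor is essentially surjective since $\Phi_0$ gives a bijection between representations of $\A_\Lambda$ and of $\A_{\Lambda'}$ up to equivalence.

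The main obstacle I anticipate is the book-keeping around stabilization: a stable tame isomorphism is not literally a DGA isomorphism, so the naturality construction of Section~\ref{ssec:Ainfty}, which gave a strict $A_\infty$ isomorphism, must be upgraded to show that adjoining a canceling pair of generators produces an $A_\infty$ quasi-isomorphism rather than an isomorphism of dualized complexes. This is carried out for augmentations in \cite{NRSSZ} by identifying the dualized contribution of such a canceling pair as an acyclic summand, and the same verification goes through verbatim here: the differential of the added pair is linear in the new generators and has no coefficients coming from $\A_R$, so the argument is insensitive to whether the augmentation target is $\coeffs$ or $\Mat_n(\coeffs)$. The only other point requiring vigilance is that the unit $e_\rho = -\sum_j y_j^\vee$ is preserved up to $A_\infty$ homotopy by the resulting functor, which follows because $\Phi_k$ is the identity on the tensor factors generating the $y_j^\vee$.
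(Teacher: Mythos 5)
Your overall route — passing through stable tame isomorphisms of DGAs of $(k+1)$-copies, feeding them into the naturality construction of Section~\ref{ssec:Ainfty}, and treating stabilization as the delicate point — matches the paper's proof, which follows \cite[Theorem~4.20, Propositions~4.21 and 4.22]{NRSSZ}. Your discussion of the Legendrian isotopy invariance half, including the observation that the dualization scheme already accepts arbitrary coefficient algebras $A$ and that the stabilization bookkeeping carries over verbatim, is essentially the argument the paper outlines. (The paper also flags that the noncommutativity of $\Mat_n(\coeffs)$ introduces ``a small amount of extra bookkeeping'' absent from the $\Aug_+$ case; your remark about the construction being formulated for general $A$ only partially covers this.)

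The genuine issue is the reduction of Morse function independence to Legendrian isotopy invariance. You propose joining $f_0$ and $f_1$ by a generic family $f_t$ and treating the induced family of $(k+1)$-copies as a Legendrian isotopy. This does not cleanly reduce to the isotopy-invariance statement you establish: that statement (following \cite[Proposition~4.21]{NRSSZ}) is proved under the hypothesis that the Morse functions on $\Lambda$ and $\Lambda'$ correspond under an ambient isotopy of the links and each have exactly one local maximum per component, whereas for fixed $\Lambda$ with $f_0 \neq f_1$ (possibly with different critical sets, hence different generator sets $x_j^\vee, y_j^\vee$) no such isotopy exists. More importantly, the consistency of the resulting stable tame isomorphisms across $k$ is exactly what has to be arranged by hand, and a generic one-parameter family $f_t$ — which passes through birth-death events and changes the number of base-point generators — does not obviously produce a consistent sequence of DGA morphisms in the sense of \cite[Definition~3.19]{NRSSZ}. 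The paper instead follows \cite[Proposition~4.22]{NRSSZ} and decomposes the change of Morse function into two elementary operations (replacing a single base point with multiple nearby ones, and pushing a base point through a crossing), each of which is shown directly to induce a consistent sequence and hence an $A_\infty$ equivalence. You would need to either reproduce that decomposition or supply an argument that the generic family respects the consistency structure, which as written you do not.
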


\begin{proof}
This follows exactly as in \cite[Theorem~4.20]{NRSSZ}; we outline the steps here.

First, following \cite[Proposition~4.21]{NRSSZ}, we suppose that $\Lambda$ and $\Lambda'$ are related by a Legendrian isotopy carrying the Morse function $f$ on $\Lambda$ to $f'$ on $\Lambda'$, and that $f$ (and hence $f'$) has exactly one local maximum on each component.  Then the Legendrian isotopy relating these links induces a stable tame isomorphism of DGAs, which induces a consistent family of DGA morphisms (in the sense of \cite[Definition 3.19]{NRSSZ})  as shown in \cite[Proposition 3.29]{NRSSZ}.  Using the construction of $A_\infty$ morphisms at the end of Section~\ref{ssec:Ainfty}, we produce an $A_\infty$ equivalence from this consistent family by following the proofs of \cite[Propositions~3.29~and~4.21]{NRSSZ} verbatim: tame isomorphisms of DGAs are easily seen to give $A_\infty$ equivalences, and then stabilization is treated by considering the functor associated to the inclusion $i:\mathcal{A}\rightarrow S(\mathcal{A})$ and doing the same verification by hand as in \cite[Theorem 2.14]{BC} and \cite[Proposition 4.21]{NRSSZ}.

Next, we repeat \cite[Proposition~4.22]{NRSSZ} to argue that for fixed $\Lambda$, the category $\Rep_n(\Lambda,\coeffs)$ is independent of the choice of $f$.  More precisely, each of two operations induces a consistent sequence of DGA morphisms, and hence an $A_\infty$ functor which is readily shown to be an equivalence.  These operations are replacing a single base point with multiple base points along a small arc of $\Lambda$, and pushing a base point through a crossing of $\Lambda$.  A small amount of extra bookkeeping is needed in this setting because the coefficients do not commute in general, unlike in the case of $\Aug_+(\Lambda,\coeffs)$, but the adaptation is straightforward.
\end{proof}

We conclude this section with a discussion of isomorphism in the representation category.
Here isomorphism of objects is defined as usual: $\rho_1,\rho_2$ are isomorphic if they are isomorphic in the cohomology category $H^*\Rep_n(\Lambda,\coeffs)$, in the sense that there are closed morphisms between $\rho_1$ and $\rho_2$ that compose to give the identity in cohomology.

\begin{prop} \label{prop:iso-criterion}
Let $\rho_1,\rho_2: \A \to \Mat_n(\coeffs)$ be two representations of the Chekanov--Eliashberg DGA $(\A,\partial)$.  If there is an invertible $M \in \Mat_n(\coeffs)$ such that $\rho_2 = M^{-1}\rho_1 M$, then $\rho_1$ is isomorphic to $\rho_2$.

Conversely, suppose that $\Lambda$ has a single base point and no Reeb chords in degree $-1$ or $-2$. If $\rho_1$ is isomorphic to $\rho_2$, then there is an invertible $M$ such that $\rho_2 = M^{-1}\rho_1M$.
\end{prop}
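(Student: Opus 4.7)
The plan is to identify morphisms in $\Rep_n(\Lambda,\coeffs)$ with matrix-coefficient elements by means of the Lagrangian-projection $k$-copy DGA description (Proposition~\ref{prop:copydga}), so that both statements become algebraic calculations in the 2-copy and 3-copy DGAs.

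For the forward direction, suppose $\rho_2 = M^{-1}\rho_1 M$ for some invertible $M \in \Mat_n(\coeffs)$. I would exhibit an explicit closed morphism and a closed inverse by taking
\[
\alpha = -\sum_{j=1}^q M\,y_j^\vee \in \Hom^0(\rho_1,\rho_2), \qquad \beta = -\sum_{j=1}^q M^{-1}\,y_j^\vee \in \Hom^0(\rho_2,\rho_1),
\]
designed to mimic the units $e_\rho = -\sum_j y_j^\vee$. Using Proposition~\ref{prop:copydga} to expand the differential of the 2-copy DGA, twisting by the pure augmentation $(\rho_1,\rho_2)$, and dualizing as in equation~\eqref{eq:sigma}, every contribution to $\mu_1(\alpha)$ comes from a monomial $\alpha_1\,y^{1,2}\,\alpha_0$ appearing in some $\partial^\epsilon w$ and takes the form $\pm\alpha_1 M \alpha_0 \cdot w^\vee$, where $\alpha_0,\alpha_1$ are monomials in the augmented values $\rho_i(t_j), \rho_i(c_l)$. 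The conjugation hypothesis lets me slide $M$ past each factor, converting $\rho_2$-factors on the right of $M$ into $\rho_1$-factors on the left, making the contributions cancel in pairs; hence $\mu_1(\alpha) = 0$, and symmetrically $\mu_1(\beta) = 0$. A parallel 3-copy calculation, restricted to the $y_j^\vee$ components, yields $\mu_2(\beta,\alpha) = -\sum_j M^{-1}M\,y_j^\vee = e_{\rho_1}$ and $\mu_2(\alpha,\beta) = e_{\rho_2}$ up to $\mu_1$-exact terms, so $\alpha$ and $\beta$ are mutually inverse in $H^*\Rep_n(\Lambda,\coeffs)$.

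For the converse, assume $\Lambda$ has a single base point $*_1$ and no Reeb chords in degree $-1$ or $-2$. Then every $c_i^\vee$ lies in degree $\geq 1$, so the only generator of $\Hom(\rho_1,\rho_2)$ in degree $0$ is $y_1^\vee$, and there are no generators in degree $-1$. Consequently,
\[
H^0\Hom(\rho_1,\rho_2) = \{\,M \in \Mat_n(\coeffs) : \mu_1(My_1^\vee) = 0\,\} \cdot y_1^\vee,
\]
and analogously for $\Hom(\rho_2,\rho_1)$. Let $\alpha = M y_1^\vee$ and $\beta = N y_1^\vee$ represent the closed morphisms realizing the isomorphism and its inverse. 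Unpacking $\mu_1(M y_1^\vee) = 0$ by the same 2-copy dualization as in the forward direction yields one equation $\rho_1(g)\,M = M\,\rho_2(g)$ for each algebra generator $g$ of $\A$ (one from the coefficient of each $c_l^\vee$, and one from the coefficient of $x_1^\vee$ for the base-point generator $t_1$). These equations together assert that $M$ intertwines $\rho_1$ and $\rho_2$ on all of $\A$. The degree-$0$ part of the relation $\mu_2(\beta,\alpha) = e_{\rho_1}$ in $H^0$ simultaneously forces $NM = I$ (and symmetrically $MN = I$), giving invertibility of $M$ and hence $\rho_2 = M^{-1}\rho_1 M$.

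The main obstacle in both directions is the explicit calculation of $\mu_1(My_1^\vee)$: one must identify every generator $w$ of the 2-copy DGA whose twisted differential contains a monomial with exactly one $y^{1,2}$ factor, and track the signs from equation~\eqref{eq:sigma}. Proposition~\ref{prop:copydga} makes this manageable by providing an explicit substitution rule for $\partial$ on the 2-copy in terms of $\partial$ on $\A$: a single $y^{1,2}$ arises by inserting $y^{1,2}$ into exactly one position of a monomial appearing in some $\partial c_l$ (or of the base-point relation at $*_1$), so the resulting $\mu_1$-output is precisely of the form $\sum_l \bigl(\rho_1(c_l)M - M\rho_2(c_l)\bigr)c_l^\vee + \bigl(\rho_1(t_1)M - M\rho_2(t_1)\bigr)x_1^\vee$, from which both directions follow.
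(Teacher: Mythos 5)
Your proposal follows essentially the same route as the paper: compute $\mu_1$ on $\Hom^0$ from the $2$-copy DGA via Proposition~\ref{prop:copydga}, compute $\mu_2$ from the $3$-copy to see that degree-$0$ compositions multiply the matrix coefficients of the $y_j^\vee$, and use the hypothesis that there are no Reeb chords in degree $-1$ or $-2$ to guarantee both that $\Hom^0$ is spanned by the $y_j^\vee$ and that $\Hom^{-1}=0$ (so that cohomological equality of units can be promoted to equality on the nose). One small slip: the $x_1^\vee$-coefficient of $\mu_1(My_1^\vee)$ is $\rho_1(t_1)^{-1}M\rho_2(t_1)-M$ rather than $\rho_1(t_1)M-M\rho_2(t_1)$, and the $c_l^\vee$-coefficient is $M\rho_2(c_l)-\rho_1(c_l)M$; these differ in form and sign from what you wrote, but the vanishing conditions they impose are equivalent, so the conclusion $\rho_2 = M^{-1}\rho_1 M$ is unaffected.
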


\begin{rem}
In the case $n=1$ of Proposition~\ref{prop:iso-criterion}, we see that if $\Lambda$ has one base point and no Reeb chords in degree $-1$ or $-2$, then no two distinct augmentations are isomorphic in $\Aug_+(\Lambda,\coeffs)$, cf.\ \cite[Proposition~5.17]{NRSSZ}.
\end{rem}

In the proof of Proposition~\ref{prop:iso-criterion} and in subsequent sections we will make use of a certain \emph{link grading} $r\times c$ from \cite[Definition~4.12]{NRSSZ}, which we recall here.
\begin{dfn} \label{def:link-grading}
Let $\Lambda$ be an oriented, pointed Legendrian link with base points $\ast_1,\dots,\ast_q$, at least one of which lies on each component of $\Lambda$.  We write $U_i$ for the component of $\Lambda \ssm \{\ast_1,\dots,\ast_q\}$ with initial endpoint $\ast_i$.  Then for each Reeb chord generator $c$ of the DGA $(\A,\partial)$, we define $r(c)$ and $c(c)$ to be the unique integers for which the upper and lower endpoints of $c$ lie on $U_{r(c)}$ and $U_{c(c)}$ respectively.  We also define $r(t_i)$ and $c(t_i)$ so that the base point $\ast_i$ is preceded by $U_{r(t_i)}$ and followed by $U_{c(t_i)}$.  (By definition $c(t_i)=i$.)
\end{dfn}

\begin{proof}[Proof of Proposition~\ref{prop:iso-criterion}]
In order to compute the differential $\mu_1$ on each $\Hom(\rho_i,\rho_j)$, we read off the DGA of $\Lambda_f^2$ from \cite[Proposition~4.14]{NRSSZ} (restated for convenience as Proposition~\ref{prop:copydga} below).  Letting $c_1,\dots,c_p$ denote the Reeb chords of $\Lambda_i$ and $(r,c)$ the link grading,
in $\Hom(\rho_1,\rho_2)$ we have
\begin{multline*}
\mu_1(u_iy_i^\vee) = \sum_{r(c_k)=i} u_i\rho_2(c_k)c_k^\vee - \sum_{c(c_k)=i} \rho_1(c_k)u_ic_k^\vee \\
+ \sum_{r(t_k)=i} \rho_1(t_k)^{-1} u_i \rho_2(t_k) x_k^\vee - \sum_{c(t_k)=i} u_i x_k^\vee.
\end{multline*}
(Here we have eliminated a sign $(-1)^{|a_k|}$ in front of $\rho_1(c_k)u_ic_k^\vee$ by observing that $\rho_1(c_k)=0$ if $|c_k| \neq 0$.)  Summing over all $i$, we have
\begin{multline*}
\mu_1\left(\sum_{i=1}^q u_iy_i^\vee\right) = \sum_{k=1}^p \left(u_{r(c_k)} \rho_2(c_k) - \rho_1(c_k)u_{c(c_k)}\right) c_k^\vee \\ 
+ \sum_{k=1}^q \left(\rho_1(t_k)^{-1} u_{r(t_k)} \rho_2(t_k) - u_{c(t_k)}\right) x_k^\vee,
\end{multline*}
so $f = \sum_i u_iy_i^\vee \in \Hom(\rho_1,\rho_2)$ is a degree-0 cycle if and only if $u_{r(z)}\rho_2(z) = \rho_1(z)u_{c(z)}$ for all $z = c_1,\dots,c_p, t_1,\dots,t_q$.  Similarly, $g = \sum_i u_i' y_i^\vee \in \Hom(\rho_2,\rho_1)$ is a degree-0 cycle if and only if $u'_{r(z)}\rho_1(z) = \rho_2(z)u'_{c(z)}$ for all $z$.

For the composition $\mu_2$, we examine the DGA of $\Lambda_f^3$: the only terms with degree 2 in the $y_k$ variables have the form $\partial y^{13}_k = y^{12}_k y^{23}_k$, from which we have $\mu_2(uy_i^\vee,u'y_j^\vee) = 0$ if $i\neq j$ and $\mu_2(uy_k^\vee,u'y_k^\vee) = -u'uy_k^\vee$.  Thus any two elements $f \in \Hom^0(\rho_1,\rho_2)$ and $g \in \Hom^0(\rho_2,\rho_1)$ of the form $f=\sum u_iy_i^\vee$ and $g=\sum u_i'y_i^\vee$ have compositions
\begin{align*}
\mu_2(g,f) &= -\sum_{i=1}^q u_iu_i'y_i^\vee \in \Hom^0(\rho_1,\rho_1), \\
\mu_2(f,g) &= -\sum_{i=1}^q u_i'u_iy_i^\vee \in \Hom^0(\rho_2,\rho_2),
\end{align*}
which are equal to the units $e_{\rho_j} = -\sum_{i=1}^q y_i^\vee$ if and only if $u_i' = u_i^{-1}$ for all $i=1,\dots,q$.

Putting these facts together, we see that the pair $(f,g)$ as above gives an isomorphism $\rho_1 \cong \rho_2$ if and only if there are invertible matrices $u_1,\dots,u_q$ such that
\begin{equation} \label{eq:isomorphism-criterion}
\rho_2(z) = u_{r(z)}^{-1} \rho_1(z) u_{c(z)}
\end{equation}
for all generators $z$ of $(\A,\partial)$.  If $\rho_1$ and $\rho_2$ are conjugate, with $\rho_2 = M^{-1}\rho_1M$, then we take $u_i=M$ for all $i$ to see that indeed $\rho_1 \cong \rho_2$.

For the converse, we suppose that $\Lambda$ has no Reeb chords in degree $-1$ or $-2$ and that the cycles $f \in \Hom^0(\rho_1,\rho_2)$ and $g \in \Hom^0(\rho_2,\rho_1)$ satisfy $[\mu_2(g,f)]=[e_{\rho_1}]$ and $[\mu_2(f,g)]=[e_{\rho_2}]$ in cohomology.  By assumption, each $\Hom^0(\rho_i,\rho_j)$ is spanned by the $y_i^\vee$, so we must have $f = \sum u_i y_i^\vee$ and $g = \sum u_i' y_i^\vee$ for some matrices $u_i$ and $u_i'$.  Moreover, we have $\Hom^{-1}(\rho_j,\rho_j)=0$ for all $j$, so any cycle in $\Hom^0(\rho_j,\rho_j)$ which is homologous to $e_{\rho_j}$ must in fact be equal to it.  In particular, we have $\mu_2(g,f) = e_{\rho_1}$ and $\mu_2(f,g) = e_{\rho_2}$, and it follows exactly as above that equation~\eqref{eq:isomorphism-criterion} holds.  The assumption that $\Lambda$ has exactly one base point implies that the link grading collapses to $r=c=1$; setting $M = u_1$, we conclude that $\rho_2 = M^{-1} \rho_1 M$ as claimed.
\end{proof}

When $\Lambda$ has multiple base points $\ast_1,\dots,\ast_q$, we can think of each representation $\rho: \A \to \Mat_n(\coeffs)$ as an action of $(\A,\partial)$ on $V^{\oplus q} = V_1 \oplus \dots \oplus V_q$, where $V = \coeffs^n$ and each $\rho(z)$ is an element of $\Hom(V_{c(z)}, V_{r(z)})$.  In other words, for each generator $z$ of $\A$ the endomorphism $\rho(z)$ is a $qn \times qn$ matrix, viewed as a $q\times q$ matrix of $n\times n$ blocks whose only nonzero block is in position $(r(z),c(z))$. For more on this perspective, see \cite[\S 3.2]{NRSSZ}.

In this case, if $\Lambda$ has no Reeb chords in degree $-1$ or $-2$, equation~\eqref{eq:isomorphism-criterion} says that two representations $\rho_1,\rho_2$ are isomorphic in $H^*\Rep_n(\Lambda,\coeffs)$ if and only if they are identified by some simultaneous change of bases for $V_1,\dots,V_q$, or equivalently iff they are conjugate by some invertible $qn\times qn$ matrix such that all off-diagonal $n\times n$ blocks are zero.  When $\Lambda$ is a knot with several base points, this seems like a much weaker notion of equivalence than in Proposition~\ref{prop:iso-criterion}, but this is explained by the fact that the additional base points introduce extra $t_i^{\pm 1}$ generators into the DGA and hence there are generally many more representations.

\section{The representation category of Legendrian \texorpdfstring{$(2,m)$}{(2,m)} torus links}
\label{sec:repr-categ-2}

In this section, we will use $\Lambda_m$ to denote the Legendrian $(2,m)$ torus link in $\R^3$ whose Lagrangian ($xy$) projection is shown in Figure~\ref{fig:2mknot-lagrangian}. This is Legendrian isotopic to the $\Lambda_m$ shown in Figure~\ref{fig:2mknot} in the Introduction, with the advantage that the Chekanov--Eliashberg DGA for this version of $\Lambda_m$ is rather simple. For the purposes of computing this DGA,
place base points $\ast_1,\ast_2$ along the loops to the right of $b_1,b_2$ (note that if $m$ is odd, this is one more base point than necessary).
\begin{figure}[ht!]
\centering
\begin{tikzpicture}[every path/.style=thick]
\draw (-4.4,0.75) .. controls +(0,1.75) and +(-2.5,2.5) .. (5,1) .. controls +(1,-1) and +(0,-0.5) .. (7,1) .. controls +(0,0.5) and +(0.9,0.9) .. (5.1,1.1);
\draw (-4.4,0.75) .. controls +(0,-0.5) and +(-0.7,0) .. (-3,-0.5) .. controls +(0.5,0) and +(-0.1,-0.1) .. (-2.1,-0.1);
\draw (-1.9,0.1) .. controls +(0.1,0.1) and +(-0.5,0) .. (-1,0.5) .. controls +(0.75,0) and +(-0.75,0) .. (1,-0.5);
\draw (3,-0.5) .. controls +(0.5,0) and +(-0.1,-0.1) .. (3.9,-0.1);
\draw (4.1,0.1) .. controls +(0.1,0.1) and +(-0.1,-0.1) .. (4.9,0.9);
\path (1.2,0) edge [dashed] (2.8,0);
\draw (-4.4,-0.75) .. controls +(0,-1.5) and +(-2.4,-2.4) .. (4.9,-1.1);
\draw (5.1,-0.9) .. controls +(0.9,0.9) and +(0,0.5) .. (7,-1) .. controls +(0,-0.5) and +(1,-1) .. (5,-1) .. controls +(-1,1) and +(0.6,0) .. (3,0.5);
\draw (-4.4,-0.75) .. controls +(0,0.4) and +(-0.1,-0.1) .. (-4.1,-0.1);
\draw (-3.9,0.1) .. controls +(0.1,0.1) and +(-0.5,0) .. (-3,0.5) .. controls +(0.75,0) and +(-0.75,0) .. (-1,-0.5) .. controls +(0.5,0) and +(-0.1,-0.1) .. (-0.1,-0.1);
\draw (0.1,0.1) .. controls +(0.1,0.1) and +(-0.5,0) .. (1,0.5);
\draw (-4,-0.4) node {$a_1$};
\draw (-2,-0.4) node {$a_2$};
\draw (0,-0.4) node {$a_3$};
\draw (4,-0.4) node {$a_m$};
\draw (5,0.6) node {$b_1$};
\draw (5,-1.4) node {$b_2$};
\draw (7,1) node {\huge$*$};
\draw (7.14,0.9) node {$\phantom{*}_1$};
\draw (7,-1) node {\huge$*$};
\draw (7.14,-1.1) node {$\phantom{*}_2$};
\end{tikzpicture}  
\caption{A Lagrangian projection of the Legendrian $(2,m)$ torus link $\Lambda_m$.}
\label{fig:2mknot-lagrangian}
\end{figure}
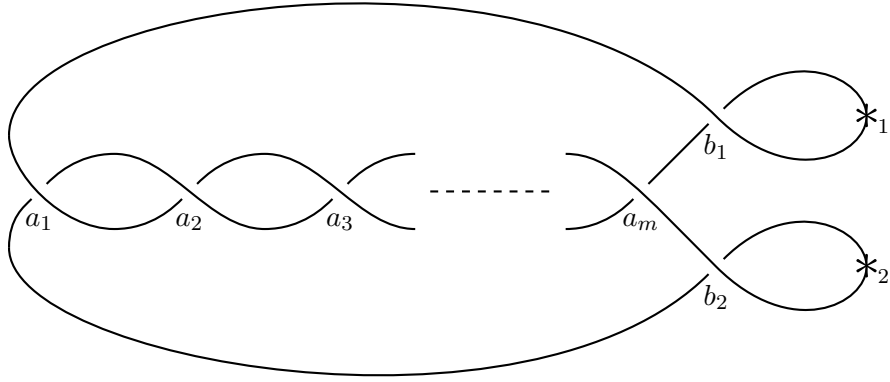

Let $a_1,a_2,\ldots$ denote a collection of noncommuting variables. Define two sequences of polynomials $\{P_m\}_{m=0}^\infty$ and $\{Q_m\}_{m=0}^\infty$ as follows:
\begin{align*}
P_0 &= 1 \\
P_1(a_1) &= a_1 \\
P_m(a_1,\ldots,a_m) &= P_{m-1}(a_1,\ldots,a_{m-1})a_m + P_{m-2}(a_1,\ldots,a_{m-2}) \\
Q_0 &= 1 \\
Q_1(a_1) &= -a_1 \\
Q_m(a_1,\ldots,a_m) &= -Q_{m-1}(a_2,\ldots,a_m)a_1 + Q_{m-2}(a_3,\ldots,a_m).
\end{align*}
Thus $P_2(a_1,a_2)=1+a_1a_2$, $Q_2(a_1,a_2) = 1+a_2a_1$, $P_3(a_1,a_2,a_3) = a_1+a_3+a_1a_2a_3$, $Q_3(a_1,a_2,a_3) = -a_1-a_3-a_3a_2a_1$, and so forth.

The Chekanov--Eliashberg DGA $(\A(\Lambda_m),\partial)$ for $\Lambda_m$ is then generated by $b_1,b_2$ in degree $1$ and $a_1,\ldots,a_m,t_1^{\pm 1},t_2^{\pm 1}$ in degree $0$, with differential given by:
\begin{align*}
\partial(b_1) &= t_1^{-1} + P_m(a_1,\ldots,a_m) \\
\partial(b_2) &= t_2 + Q_m(a_1,\ldots,a_m)
\end{align*}
and the differential of all degree $0$ generators is $0$.

\begin{rem}
If $m$ is even and $\Lambda_m$ is a $2$-component link, the grading on $\A(\Lambda_m)$ is not well-defined, but rather depends on a choice of Maslov potential on $\Lambda_m$: the general grading is $|b_1|=|b_2|=1$ and $|a_i| = (-1)^{i}\ell$ for some fixed $\ell\in\Z$. In this case, we choose a Maslov potential such that $\ell=0$. This corresponds to a choice of Maslov potential on the front projection from Figure~\ref{fig:2mknot} that is binary, cf.\ the sheaf computation in Section~\ref{ssec:sheaf-obj}.
\end{rem}

An object of the representation category $\Rep_n(\Lambda_m,\coeffs)$ is determined by where it sends the degree $0$ generators of $(\A(\Lambda_m),\partial)$: that is, it is given by a choice of matrices $A_1,\ldots,A_m,T_1,T_2 \in \Mat_n(\coeffs)$ such that
\begin{align*}
P_m(A_1,\ldots,A_m) &= - T_1^{-1} \\
Q_m(A_1,\ldots,A_m) &= -T_2.
\end{align*}
We now have the following result, generalizing Sylvester's determinant identity (which is the $m=2$ case).

\begin{prop}
Let $A_1,\ldots,A_m \in \Mat_n(\coeffs)$. Then 
\label{prop:sylvester}
$$\det(P_m(A_1,\ldots,A_m)) = (-1)^{mn}\det(Q_m(A_1,\ldots,A_m)).$$
\end{prop}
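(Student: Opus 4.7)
The plan rests on a palindromic symmetry. The first step is to verify, in the free algebra on the noncommuting variables $a_1,\ldots,a_m$, the identity
\[
Q_m(a_1,\ldots,a_m) = (-1)^m\, P_m(a_m,a_{m-1},\ldots,a_1).
\]
Setting $R_m(a_1,\ldots,a_m) := P_m(a_m,\ldots,a_1)$, the defining recursion for $P$ immediately gives $R_m = R_{m-1}(a_2,\ldots,a_m)\,a_1 + R_{m-2}(a_3,\ldots,a_m)$, from which one checks that $(-1)^m R_m$ satisfies the defining recursion for $Q_m$ and agrees with $Q_m$ for $m=0,1$. Specializing to $a_i=A_i$ and using $\det(-X)=(-1)^n\det X$, the proposition reduces to the claim $\det P_m(A_1,\ldots,A_m) = \det P_m(A_m,\ldots,A_1)$.

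Next, I would introduce the $mn \times mn$ block tridiagonal matrix
\[
M(A_1,\ldots,A_m) = \begin{pmatrix} A_1 & I & & & \\ -I & A_2 & I & & \\ & -I & \ddots & \ddots & \\ & & \ddots & A_{m-1} & I \\ & & & -I & A_m \end{pmatrix},
\]
and prove by induction on $m$ that $\det M(A_1,\ldots,A_m) = \det P_m(A_1,\ldots,A_m)$. For the inductive step, a block Schur complement along the last block row and column gives, when $A_m$ is invertible,
\[
\det M(A_1,\ldots,A_m) = \det(A_m)\cdot\det M(A_1,\ldots,A_{m-2},\, A_{m-1}+A_m^{-1}),
\]
because the rank-one correction $-vA_m^{-1}w$ only alters the $(m-1,m-1)$ block. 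Combining the inductive hypothesis with the (noncommutative) expansion
\[
P_{m-1}(A_1,\ldots,A_{m-2},A_{m-1}+A_m^{-1}) = P_{m-1}(A_1,\ldots,A_{m-1}) + P_{m-2}(A_1,\ldots,A_{m-2})A_m^{-1},
\]
and distributing $\det(A_m)$ by scalar multiplicativity, one recovers $\det P_m(A_1,\ldots,A_m)$. The non-invertible case follows from polynomiality of both sides in the entries of $A_m$.

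Finally, I would establish the palindromic identity for $M$ by a double conjugation. Let $\Pi$ be the block permutation matrix that reverses the order of the $m$ blocks, and let $D$ be the block-diagonal matrix whose $i$-th block is $(-1)^i I_n$. Conjugation by $\Pi$ transforms $M(A_1,\ldots,A_m)$ into a block tridiagonal matrix with diagonal blocks $A_m,A_{m-1},\ldots,A_1$ but with the signs on the super- and sub-diagonals interchanged; subsequent conjugation by $D$ flips those off-diagonal signs back, producing exactly $M(A_m,\ldots,A_1)$. Both conjugations preserve the scalar determinant, so $\det M(A_1,\ldots,A_m) = \det M(A_m,\ldots,A_1)$, which completes the proof.

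I do not foresee any serious obstacle: the argument is elementary, and the only nuisances are the sign bookkeeping that links the two recursions, the preservation of the noncommutative order of factors in the Schur complement step, and the routine polynomiality extension to non-invertible $A_m$.
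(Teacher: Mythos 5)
Your proof is correct, and it takes a genuinely different route from the paper. The paper observes that $\left(\begin{smallmatrix}-A_j & 1\\ 1 & 0\end{smallmatrix}\right)$ and $\left(\begin{smallmatrix}0 & 1\\ 1 & A_j\end{smallmatrix}\right)$ are mutual inverses, so multiplying the two product formulas~\eqref{eq:8} and~\eqref{eq:8a} and stacking them yields a block-triangular factorization
\[
\begin{pmatrix} 1 & 0 \\ P_{m-1} & P_m \end{pmatrix}
=
\begin{pmatrix} Q_m & Q_{m-1} \\ 0 & 1 \end{pmatrix}
\begin{pmatrix} 0 & 1 \\ 1 & A_1 \end{pmatrix}\cdots\begin{pmatrix} 0 & 1 \\ 1 & A_m \end{pmatrix},
\]
from which the determinant identity drops out immediately (each middle factor contributes $(-1)^n$). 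Your route instead isolates the palindromic symmetry as the key fact: you first establish the clean symbolic identity $Q_m(a_1,\ldots,a_m)=(-1)^m P_m(a_m,\ldots,a_1)$, reducing the claim to $\det P_m(A_1,\ldots,A_m)=\det P_m(A_m,\ldots,A_1)$, and then prove this by realizing $\det P_m$ as the determinant of a block tridiagonal ``continuant'' matrix $M$ and observing that block reversal followed by the alternating sign conjugation carries $M(A_1,\ldots,A_m)$ to $M(A_m,\ldots,A_1)$. Both arguments are elementary and sound; the paper's is shorter and recycles identities~\eqref{eq:8},~\eqref{eq:8a}, which it also needs for Proposition~\ref{PropObjSH} and the final proof of Theorem~\ref{thm:princ}, while your version makes the noncommutative palindromic structure of the continuants explicit (a pleasant conceptual bonus, at the cost of an induction, a Schur complement computation, and a polynomiality extension to cover singular $A_m$).
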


\begin{proof}
By the recurrence relation for $P_m$, we have the following matrix identities, where the matrices are presented in block form with each entry being an $n\times n$ matrix:
\begin{align}
\begin{pmatrix} P_{m-1}(A_1,\ldots,A_{m-1}) & P_m(A_1,\ldots,A_m) \end{pmatrix}
&=
\begin{pmatrix} 0 & 1 \end{pmatrix}
\begin{pmatrix} 0 & 1 \\ 1 & A_1 \end{pmatrix} \cdots
\begin{pmatrix} 0 & 1 \\ 1 & A_m \end{pmatrix} \label{eq:8} \\
\begin{pmatrix} Q_{m}(A_1,\ldots,A_{m}) & Q_{m-1}(A_2,\ldots,A_m) \end{pmatrix}
&=
\begin{pmatrix} 1 & 0 \end{pmatrix}
\begin{pmatrix} -A_m & 1 \\ 1 & 0 \end{pmatrix}  \cdots
\begin{pmatrix} -A_1 & 1 \\ 1 & 0 \end{pmatrix}. \label{eq:8a}
\end{align}
Multiplying \eqref{eq:8a} on the right by $\left( \begin{smallmatrix} 0 & 1 \\ 1 & A_1 \end{smallmatrix} \right) \cdots 
\left( \begin{smallmatrix} 0 & 1 \\ 1 & A_m \end{smallmatrix} \right)$
and combining with \eqref{eq:8} yields:
\begin{multline*}
\begin{pmatrix} 1 & 0 \\
P_{m-1}(A_1,\ldots,A_{m-1}) & P_m(A_1,\ldots,A_m) 
\end{pmatrix} \\
=
\begin{pmatrix}
Q_{m}(A_1,\ldots,A_{m}) & Q_{m-1}(A_2,\ldots,A_m) \\
0 & 1 \end{pmatrix}
\begin{pmatrix} 0 & 1 \\ 1 & A_1 \end{pmatrix} \cdots
\begin{pmatrix} 0 & 1 \\ 1 & A_m \end{pmatrix}.
\end{multline*}
Now take the determinant of both sides.
\end{proof}

By Proposition~\ref{prop:sylvester}, the objects of $\Rep_n(\Lambda_m,\coeffs)$ are in correspondence with
\begin{equation}
\{(A_1,\ldots,A_m)\in \Mat_n(\coeffs)\,|\,P_m(A_1,\ldots,A_m) \text{ is invertible}\};\label{eq:9}
\end{equation}
then $T_1$ and $T_2$ are determined by
$T_1 = -P_m(A_1,\ldots,A_m)^{-1}$ and $T_2 = -Q_m(A_1,\ldots,A_m)$.

We now turn to the morphisms and $A_\infty$ operations in $\Rep_n(\Lambda_m,\coeffs)$. Let $\rho = (A_1,\ldots,A_m)$ and $\rho'=(A_1',\ldots,A_m')$ be two objects in $\Rep_n(\Lambda_m,\coeffs)$. The graded hom spaces between $\rho$ and $\rho'$ are the free $\Mat_n(\coeffs)$-modules with generators given as follows:

\begin{align*}
\Hom^2(\rho,\rho') &= \langle b_1^\vee, b_2^\vee \rangle \\
\Hom^1(\rho,\rho') &= \langle a_1^\vee,\ldots,a_m^\vee,x_1^\vee,x_2^\vee \rangle \\
\Hom^0(\rho,\rho') &= \langle y_1^\vee,y_2^\vee \rangle.
\end{align*}

To calculate the $\mu_1$ maps, we appeal to \cite[Proposition~4.14]{NRSSZ}, which gives a combinatorial formula for the DGA of the $k$-copy of $\Lambda$ in terms of the DGA of $\Lambda$, and which for convenience we recall here. 

\begin{prop}[{\cite[Proposition 4.14]{NRSSZ}}]
Let the DGA $(\A,\partial)$ of $\Lambda$ be generated by Reeb chords $c_1,\ldots,c_p$ along with $t_1^{\pm 1},\ldots,t_q^{\pm 1}$ corresponding to base points $\ast_1,\ldots,\ast_q$. 
\label{prop:copydga}
The DGA $(\A^k,\partial^k)$ for the $k$-copy of $\Lambda$ is given as follows. As an algebra, $\A^k$ is generated by:
\begin{itemize}
\item
$\{c^{ij}_\ell\}_{1\leq i,j\leq k,~1\leq\ell\leq p}$, with $|c^{ij}_\ell|=|c_\ell|$;
\item
$\{(t_\ell^i)^{\pm 1}\}_{1\leq i\leq k,~1\leq\ell\leq q}$, with $|(t_\ell^i)^{\pm 1}| = 0$;
\item
$\{x^{ij}_\ell,y^{ij}_\ell\}_{1\leq i<j\leq k,~1\leq\ell\leq q}$, with $|x^{ij}_\ell|=0$ and $|y^{ij}_\ell|=-1$.
\end{itemize}
To state the differential, define $k\times k$ matrices: $C_1,\ldots,C_p$ with $(C_\ell)_{ij} = c^{ij}_\ell$,
and $X_1,\ldots,X_q,Y_1,\ldots,Y_q,\Delta_1,\ldots,\Delta_q$ with
\begin{align*}
(X_\ell)_{ij} &= \begin{cases} 1 & i=j \\
x_\ell^{ij} & i<j \\
0 & i>j \end{cases}
&
(Y_\ell)_{ij} &= \begin{cases} y_\ell^{ij} & i<j \\
0 & i \geq j \end{cases} &
(\Delta_\ell)_{ij} &= \begin{cases} t_\ell^i & i=j \\
0 & i\neq j.
\end{cases}
\end{align*}
Then $\partial^k$ is given entry-by-entry by:
\begin{align*}
\partial^k(C_\ell) &= \Phi(\partial(c_\ell))+Y_{r(c_\ell)}C_\ell-(-1)^{|c_\ell|}C_\ell Y_{c(c_\ell)} \\
\partial^k(X_\ell) &= \Delta_\ell^{-1} Y_{r(t_\ell)} \Delta_\ell X_\ell - X_\ell Y_{c(t_\ell)} \\
\partial^k(Y_\ell) &= Y_\ell^2,
\end{align*}
where $\Phi :\thinspace \A \to \Mat_k(\A^k)$ is the ring homomorphism given by
$\Phi(c_\ell) = C_\ell$, $\Phi(t_\ell) = \Delta_\ell X_\ell$, $\Phi(t_\ell^{-1}) = X_\ell^{-1} \Delta_\ell^{-1}$, and the link grading $r(\cdot), c(\cdot)$ was defined in Definition~\ref{def:link-grading}.
\end{prop}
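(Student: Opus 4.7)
The plan is to identify the Reeb chords of the perturbed $k$-copy $\Lambda_f^k$ and then compute $\partial^k$ by matching holomorphic disks in $\Lambda_f^k$ against broken configurations consisting of a disk in $\Lambda$ together with Morse flowlines of $f$. Recall that $\Lambda_f^k$ consists of $k$ parallel copies, with the $i$-th perturbed normally by a small multiple of $f$, where $f$ has exactly one local minimum and one local maximum in a small neighborhood of each base point $\ast_\ell$ and no other critical points. First I would identify the generators: each Reeb chord $c_\ell$ of $\Lambda$ gives $k^2$ ``long'' chords $c^{ij}_\ell$ between copies with the same degree $|c_\ell|$; near each $\ast_\ell$, the local maximum produces $\binom{k}{2}$ short degree-$0$ chords $x^{ij}_\ell$ with $i<j$, the local minimum produces $\binom{k}{2}$ short degree-$(-1)$ chords $y^{ij}_\ell$, and lifts of the base points give the invertible generators $t_\ell^i$. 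Standard Conley--Zehnder computations for Morse perturbations confirm the stated gradings.

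Next I would compute each differential by analyzing how holomorphic disks in $\Lambda_f^k$ degenerate as the perturbation shrinks to zero. Disks contributing to $\partial^k(C_\ell)$ split into two families: (i) disks limiting to a genuine disk in $\Lambda$ contributing to $\partial c_\ell$, whose boundary is lifted to various copies and is only allowed to switch copies at $x$-chords (a vertical jump forced by crossing the maximum of $f$). Tracking which copy each boundary segment lies on and which base points it traverses produces exactly the ring homomorphism $\Phi$, with $\Phi(t_\ell) = \Delta_\ell X_\ell$ because a segment crossing $\ast_\ell^i$ picks up $t_\ell^i$ and may then jump via an $x^{ij}_\ell$; (ii) small bigons near an endpoint of $c^{ij}_\ell$ passing through a $y$-chord, which contribute the correction $Y_{r(c_\ell)} C_\ell - (-1)^{|c_\ell|} C_\ell Y_{c(c_\ell)}$, with the Koszul sign coming from the lower endpoint. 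For $X_\ell$ and $Y_\ell$, the relevant disks are supported locally near $\ast_\ell$ and reduce to Morse theory on an interval: the triangles $y^{ij}_\ell \cdot y^{jk}_\ell \mapsto y^{ik}_\ell$ assemble into the matrix identity $\partial^k Y_\ell = Y_\ell^2$, while the analogous count for $X_\ell$ must record base-point passages on each copy, producing the conjugating factors $\Delta_\ell^{\pm 1}$ and yielding $\partial^k X_\ell = \Delta_\ell^{-1} Y_{r(t_\ell)} \Delta_\ell X_\ell - X_\ell Y_{c(t_\ell)}$.

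The main obstacle is the analytic correspondence underlying all of the above: one must choose the perturbation carefully so that transversality holds for the relevant moduli spaces of Maslov-index-one disks in $\Lambda_f^k$, and prove a gluing theorem identifying each rigid disk, for small enough perturbation, with a unique broken configuration of the type described. The sign bookkeeping is equally delicate, since the counts must be compatible with the Koszul conventions used in \cite{NRSSZ}. This analytic and combinatorial matching is the substance of \cite[Proposition~4.14]{NRSSZ}; once it is in place, the displayed formulas for $\partial^k$ follow mechanically from the combinatorial description of each broken configuration.
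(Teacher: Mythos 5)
Note first that the paper does not prove this proposition; it restates it from \cite[Proposition~4.14]{NRSSZ} ``for convenience,'' so the comparison must be with the argument in that source. Your disk-by-disk accounting of the terms of $\partial^k$ is accurate: thickened copies of disks of $\Lambda$ for $\Phi(\partial c_\ell)$, small bigons at the two endpoints of a long chord for the $Y$-correction terms, and the purely local picture near each base point for $\partial^k X_\ell$ and $\partial^k Y_\ell$. What does not match is the framework in which you place this accounting. You cast the proof as a Morse--Bott degeneration, requiring transversality for moduli spaces of pseudoholomorphic disks with boundary on $\R\times\Lambda_f^k$ and a gluing theorem identifying rigid disks with broken configurations as the perturbation scale goes to zero, and you call this analysis ``the main obstacle.'' That is not how \cite{NRSSZ} argues. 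The setting there is the Lagrangian ($xy$) projection of a Legendrian in $\R^3$, where the Chekanov--Eliashberg differential is \emph{defined} by counting immersed polygons with convex corners in a planar diagram, so once $f$ and the perturbation scale are fixed, the diagram of $\Lambda_f^k$ is an explicit planar object and the identification of its polygons with the broken configurations you describe (polygons of $\Lambda$ whose boundary arcs are lifted to copies, jumping copy only at $x$-crossings and recording base-point passages via $\Delta_\ell$, together with small local polygons near the new crossings) is an exact combinatorial correspondence for any sufficiently small fixed perturbation. No limit is taken, no transversality condition needs to be checked, and no gluing theorem is invoked. The genuine content of \cite[Proposition~4.14]{NRSSZ} is the careful but elementary bookkeeping of these polygons, in particular the arrangement of the $x$- and $y$-crossings near each base point, the resulting appearance of $\Delta_\ell$, $X_\ell$, $Y_\ell$, and the signs. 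So your sketch has the right picture, and it would be the correct strategy in a higher-dimensional $1$-jet space where no combinatorial polygon model is available, but in $\R^3$ it imports analytic machinery the actual argument does not use, and it leaves as a gap exactly the step that \cite{NRSSZ} handles by direct inspection of the diagram.
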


In our case, we have link grading $c(t_1)=1$, $c(t_2)=2$, $(r(t_1),r(t_2))=\begin{cases} (2,1) & m \text{ odd} \\ (1,2) & m \text{ even} \end{cases}$, $(r(a_j),c(a_j)) = \begin{cases} (1,2) & j \text{ odd} \\ (2,1) & j \text{ even} \end{cases}$, and
$(r(b_j),c(b_j)) = \begin{cases} (1,2) & m \text{ odd} \\ (j,j) & m \text{ even} \end{cases}$.
This leads to the following formulas for the differential $\partial^2$ in the $2$-copy of $\Lambda$:
\begingroup
\allowdisplaybreaks
\begin{align*}
\partial^2(b_1^{12}) &= -x_1^{12}(t_1^2)^{-1}+y_{r(b_1)}^{12}b_1^{22}+b_1^{11}y_{c(b_1)}^{12}
+\sum_{j=1}^{m} P_{j-1}(a_1^{11},\ldots,a_{j-1}^{11})a_j^{12}P_{m-j}(a_{j+1}^{22},\ldots,a_{m}^{22}) \\
\partial^2(b_2^{12}) &= t_2^1x_2^{12}+y_{r(b_2)}^{12}b_2^{22}+b_2^{11}y_{c(b_2)}^{12}
-\sum_{j=1}^{m} Q_{m-j}(a_{j+1}^{11},\ldots,a_{m}^{11})a_j^{12}Q_{j-1}(a_1^{22},\ldots,a_{j-1}^{22}) \\
\partial^2(a_j^{12}) &= y_{r(a_j)}^{12}a_j^{22}-a_j^{11}y_{c(a_j)}^{12} \\
\partial^2(x_1^{12}) &= (t_1^1)^{-1}y_{r(t_1)}^{12}t_1^2 - y_1^{12} \\
\partial^2(x_2^{12}) &= (t_2^1)^{-1}y_{r(t_2)}^{12}t_2^2 - y_2^{12} \\
\partial^2(y_1^{12}) &= \partial^2(y_2^{12}) = 0.
\end{align*}
\endgroup
Augmenting by substituting $a_j^{11}=A_j$, $a_j^{22}=A_j'$,  $b_j^{11} = b_j^{22} = 0$, $t_j^1 = T_j$, $t_j^2 = T_j'$ and dualizing gives the following formulas for $\mu_1$:
\begingroup
\allowdisplaybreaks
\begin{align*}
\mu_1(z_1b_1^\vee) &= \mu_1(z_2b_2^\vee) = 0 \\
\mu_1(w_ja_j^\vee) &= P_{j-1}(A_1,\ldots,A_{j-1})w_j P_{n-j}(A_{j+1}',\ldots,A_n') b_1^\vee \\*
& \qquad 
- Q_{n-j}(A_{j+1},\ldots,A_n) w_j Q_{j-1}(A_1',\ldots,A_{j-1}') b_2^\vee \\
\mu_1(v_1x_1^\vee) &= -v_1(T_1')^{-1}b_1^\vee \\
\mu_1(v_2x_2^\vee) &= T_2v_2b_2^\vee \\
\mu_1(u_1y_1^\vee) &= \sum_{j \text{ odd}} u_1 A_j' a_j^\vee - \sum_{j \text{ even}} A_j u_1 a_j^\vee - u_1 x_1^\vee 
+ \begin{cases} T_2^{-1} u_1 T_2' x_2^\vee & m \text{ odd} \\ T_1^{-1} u_1 T_1' x_1^\vee & m \text{ even} \end{cases} \\
\mu_1(u_2y_2^\vee) &= -\sum_{j \text{ odd}} A_j u_2 a_j^\vee + \sum_{j \text{ even}} u_2 A_j' a_j^\vee - u_2 x_2^\vee
+ \begin{cases} T_1^{-1} u_2 T_1' x_1^\vee & m \text{ odd} \\ T_2^{-1} u_2 T_2' x_2^\vee & m \text{ even}. \end{cases}
\end{align*}
\endgroup

From this we can calculate the cohomology of $\Hom(\rho,\rho')$ with respect to $\mu_1$.

\begin{prop}\label{propReptorus}
Let $\rho = (A_1,\ldots,A_m)$ and $\rho'=(A_1',\ldots,A_m')$ be objects in $\Rep_n(\Lambda_m,\coeffs)$. Then we have:
\[
H^0\Hom(\rho,\rho') \cong \{(u_1,u_2) \in (\Mat_n(\coeffs))^2 \,|\,
u_1A_j'=A_ju_2 \text{ for $j$ odd and } A_ju_1 = u_2A_j' \text{ for $j$ even}\};
\]
\begin{align*}
&H^1\Hom(\rho,\rho') \cong (\Mat_n(\coeffs))^m \,/ \\
& \quad \{(u_1A_1'-A_1u_2,u_2A_2'-A_2u_1,u_1A_3'-A_3u_2,\ldots,u_1A_m'-A_mu_2)\,|\,u_1,u_2\in\Mat_n(\coeffs)\}
\end{align*}
if $m$ is odd, and
\begin{align*}
&H^1\Hom(\rho,\rho') \cong (\Mat_n(\coeffs))^m \,/ \\
& \quad \{(u_1A_1'-A_1u_2,u_2A_2'-A_2u_1,u_1A_3'-A_3u_2,\ldots,u_2A_m'-A_mu_1)\,|\,u_1,u_2\in\Mat_n(\coeffs)\}
\end{align*}
if $m$ is even; and $H^2\Hom(\rho,\rho') = 0$.
\label{prp:cohrep}
\end{prop}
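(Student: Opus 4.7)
The strategy is to compute $H^i\Hom(\rho,\rho')$ for $i=0,1,2$ directly from the $\mu_1$ formulas displayed above, working downward from degree $2$. I write a general element of $\Hom^1$ as $\sum_{j=1}^m w_j a_j^\vee + v_1 x_1^\vee + v_2 x_2^\vee$ and a general element of $\Hom^0$ as $u_1 y_1^\vee + u_2 y_2^\vee$, with $u_i,v_i,w_j\in\Mat_n(\coeffs)$.

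For $H^2$: the map $\mu_1\colon\Hom^1\to\Hom^2$ is surjective, because given $z_1 b_1^\vee + z_2 b_2^\vee$ I can take $w_j=0$ for all $j$, $v_1=-z_1 T_1'$, and $v_2=T_2^{-1}z_2$, using the invertibility of $T_1'$ and $T_2$.  Thus $H^2=0$.  Moreover, setting the $b_1^\vee$ and $b_2^\vee$ coefficients of $\mu_1$ equal to zero determines $v_1$ and $v_2$ uniquely in terms of $(w_1,\ldots,w_m)$, namely
\begin{align*}
v_1 &= \Big(\textstyle\sum_{j=1}^m P_{j-1}(A) w_j P_{m-j}(A')\Big) T_1', & v_2 &= T_2^{-1}\textstyle\sum_{j=1}^m Q_{m-j}(A) w_j Q_{j-1}(A'),
\end{align*}
so projection to the $a$-components induces a $\coeffs$-linear isomorphism $\ker(\mu_1\colon\Hom^1\to\Hom^2)\xrightarrow{\sim}(\Mat_n(\coeffs))^m$.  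Since $\img(\mu_1\colon\Hom^0\to\Hom^1)\subseteq\ker$, the quotient $H^1$ is then $(\Mat_n(\coeffs))^m$ modulo the projection to $a$-components of $\mu_1(u_1 y_1^\vee + u_2 y_2^\vee)$, which one reads off directly from the $\mu_1$ formulas to recover the displayed relations, with the parity distinction between $m$ odd and $m$ even accounting for the two cases in the statement.

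The remaining step, computing $H^0=\ker(\mu_1\colon\Hom^0\to\Hom^1)$, is the delicate one.  Setting the coefficients of $a_j^\vee$ and $x_i^\vee$ in $\mu_1(u_1 y_1^\vee + u_2 y_2^\vee)$ equal to zero produces both the asserted $a$-conditions and additional $x$-conditions: when $m$ is odd these read $u_1=T_1^{-1}u_2 T_1'$ and $u_2=T_2^{-1}u_1 T_2'$, with an analogous pair when $m$ is even.  The main obstacle is showing that these $x$-conditions are automatic consequences of the $a$-conditions.  Substituting $T_1=-P_m(A)^{-1}$ and $T_2=-Q_m(A)$ (and the primed analogues), they become intertwining identities $u_1 P_m(A')=P_m(A) u_2$ and $u_1 Q_m(A')=Q_m(A) u_2$ when $m$ is odd, and $u_1 P_m(A')=P_m(A) u_1$ together with $u_2 Q_m(A')=Q_m(A) u_2$ when $m$ is even.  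I would prove these by induction on $m$ using the recurrences $P_m = P_{m-1}a_m + P_{m-2}$ and $Q_m = -Q_{m-1}a_1 + Q_{m-2}$, repeatedly applying the $a$-intertwining relations to slide each factor $u_i$ across an $A_j'$; the parity of $m$ cleanly controls whether $u_1$ or $u_2$ appears on the right.  Once these intertwining identities are in hand the $x$-conditions drop out, and the description of $H^0$ follows.
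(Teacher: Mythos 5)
Your proposal is correct and follows essentially the same route as the paper. You identified the same three steps: $H^2=0$ via invertibility of $T_1',T_2$; $H^1$ by noting that $v_1,v_2$ are determined by the $w_j$ and then projecting away the $x$-terms; and $H^0$ by recognizing that the $x$-conditions ($u_1=T_1^{-1}u_2T_1'$, etc.) are automatic consequences of the $a$-conditions. The paper isolates this last step as a separate lemma (Lemma~\ref{lem:u-commutes-with-pq}) proving exactly the intertwining identities $u_1P_m(A')=P_m(A)u_j$ and $Q_m(A)u_2=u_iQ_m(A')$ by induction on $m$; your sketch matches that proof, though you might find it cleaner to use the alternative recurrence $Q_m(a_1,\ldots,a_m)=-a_mQ_{m-1}(a_1,\ldots,a_{m-1})+Q_{m-2}(a_1,\ldots,a_{m-2})$ (equivalent to the defining one, as the paper notes) so that the inductive step does not shift indices.
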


\begin{proof}
As $\mu_1(-vT_1'x_1^\vee)=vb_1^\vee $ and $\mu_1(T_2^{-1}vx_2^\vee)=vb_2^\vee$, the statement for $H^2$ is clear. 
For $H^1\Hom(\rho,\rho')$, note that any degree $1$ cycle can be written as $\sum_j w_j a_j^\vee + v_1x_1^\vee+v_2x_2^\vee$, where $w_1,\ldots,w_m$ are unconstrained and $v_1,v_2$ are determined by $w_1,\ldots,w_m$; to get the desired isomorphism, project away the $x_1^\vee$ and $x_2^\vee$ terms.
Finally, for $H^0\Hom(\rho,\rho')$, note that by the formula for $\mu_1$, the statement that $\mu_1(u_1y_1^\vee+u_2y_2^\vee)=0$ is equivalent to the conditions $u_1A_j'=A_ju_2$ etc.\ given in the statement of the proposition, along with the conditions
\begin{align*}
u_1 &= T_1^{-1}u_2T_1' & u_2 &= T_2^{-1}u_1T_2' && \text{$m$ odd} \\
u_1 &= T_1^{-1}u_1T_1' & u_2 &= T_2^{-1}u_2T_2' && \text{$m$ even},
\end{align*}
but these extra conditions are superfluous due to the equalities $T_1^{-1} = -P_m(A_1,\ldots,A_m)$, $(T_1')^{-1} = -P_m(A_1',\ldots,A_m')$, $T_2=-Q_m(A_1,\ldots,A_m)$, $T_2'=-Q_m(A_1',\ldots,A_m')$ and Lemma~\ref{lem:u-commutes-with-pq} below.
\end{proof}

\begin{lemma} \label{lem:u-commutes-with-pq}
Suppose for some $m \geq 1$ that $A_1,\dots,A_m,A'_1,\dots,A'_m,u_1,u_2$ are elements of $\Mat_n(\coeffs)$ satisfying
\[ u_1A'_j = A_ju_2\text{ for all odd }j \qquad\text{and}\qquad A_ju_1 = u_2A'_j\text{ for all even }j. \]
Then we have
\begin{align*}
u_1P_m(A'_1,\dots,A'_m) &= \begin{cases} P_m(A_1,\dots,A_m)u_2, & m\text{ odd} \\ P_m(A_1,\dots,A_m)u_1, & m\text{ even}, \end{cases} \\
Q_m(A_1,\dots,A_m)u_2 &= \begin{cases} u_1Q_m(A'_1,\dots,A'_m), & m\text{ odd} \\ u_2Q_m(A'_1,\dots,A'_m), &m\text{ even}. \end{cases}
\end{align*}
\end{lemma}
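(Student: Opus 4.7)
My plan is to prove the $P_m$ identities by induction on $m$, and then deduce the $Q_m$ identities via a reversal trick.

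As a preliminary, a straightforward induction on the defining recursions establishes the identity $Q_m(a_1,\ldots,a_m) = (-1)^m P_m(a_m,\ldots,a_1)$, which will let us transfer statements about $P$ to statements about $Q$.

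For the $P_m$ identities I would induct on $m$, with the cases $m = 0,1$ immediate from the hypothesis (for $m=1$, the odd-$j=1$ case gives exactly $u_1 A'_1 = A_1 u_2$). For the inductive step I would expand $u_1 P_m(A'_1,\ldots,A'_m) = u_1 P_{m-1}(A'_1,\ldots,A'_{m-1}) A'_m + u_1 P_{m-2}(A'_1,\ldots,A'_{m-2})$ using the $P$-recurrence. Applying the inductive hypothesis to each of $P_{m-1}$ and $P_{m-2}$ converts the leading $u_1$ into a trailing $u_1$ or $u_2$, depending on the parities of $m-1$ and $m-2$; in the first summand, the resulting trailing $u_?$ is then pushed past $A'_m$ via the appropriate case of the hypothesis (namely $u_1 A'_m = A_m u_2$ when $m$ is odd, or $u_2 A'_m = A_m u_1$ when $m$ is even). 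A short parity check shows that both summands end with the same $u_?$ on the right, and recombining them via the $P$-recurrence yields $P_m(A_1,\ldots,A_m) u_?$ with exactly the subscript predicted by the lemma.

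For the $Q_m$ identities, set $B_j := A_{m+1-j}$ and $B'_j := A'_{m+1-j}$, so that $Q_m(A_1,\ldots,A_m) = (-1)^m P_m(B_1,\ldots,B_m)$ by the preliminary identity. Since $j$ and $m+1-j$ have the same parity when $m$ is odd and opposite parity when $m$ is even, the hypothesis on $(A_j, A'_j, u_1, u_2)$ transfers to the $B$-sequence unchanged for odd $m$ and with the roles of $u_1, u_2$ interchanged for even $m$. Applying the already-proved $P$-identity to the $B$'s (with the $u_1 \leftrightarrow u_2$ swap in the even case) and multiplying by $(-1)^m$ yields precisely the two claimed $Q$-identities. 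The only real obstacle is keeping the parity bookkeeping consistent through the reversal step; once the parity tables are drawn up, the algebraic manipulations are entirely mechanical.
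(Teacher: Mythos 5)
Your proof is correct. The $P_m$ part matches the paper's argument almost exactly: the same induction on $m$, expanding $u_1P_m(A'_1,\dots,A'_m)$ via the defining recurrence, applying the inductive hypothesis to the two subterms, and pushing $u_?$ past $A'_m$ using the hypothesis. (The paper uses base cases $m=1,2$; your choice of $m=0,1$ is equivalent, with the $m=0$ statement being the trivial $u_1\cdot 1 = 1\cdot u_1$.)

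Where you diverge is the $Q_m$ part. The paper establishes and uses an ``alternative'' recurrence
\[ Q_m(A_1,\dots,A_m) = -A_m Q_{m-1}(A_1,\dots,A_{m-1}) + Q_{m-2}(A_1,\dots,A_{m-2}), \]
which it asserts (correctly but without proof) is equivalent to the defining one, and then runs the same induction again. You instead prove the reversal identity $Q_m(a_1,\dots,a_m) = (-1)^m P_m(a_m,\dots,a_1)$ by an easy induction, transfer the hypothesis to the reversed sequence $B_j = A_{m+1-j}$ (noting the $u_1\leftrightarrow u_2$ swap when $m$ is even), and apply the already-proved $P_m$ identity. I checked your parity bookkeeping and it is right: when $m$ is odd the reversal preserves parities and the $P$-conclusion $u_1P_m(B') = P_m(B)u_2$ becomes $u_1Q_m(A') = Q_m(A)u_2$; when $m$ is even the reversal flips parities (hence the $u_1\leftrightarrow u_2$ swap), giving $u_2P_m(B') = P_m(B)u_2$ and hence $u_2Q_m(A') = Q_m(A)u_2$. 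Both approaches require one auxiliary fact verified by an easy induction (the paper's alternative recurrence vs.\ your reversal identity); yours has the modest advantage of avoiding a second pass through the main induction, and the reversal identity $Q_m(a_1,\dots,a_m) = (-1)^m P_m(a_m,\dots,a_1)$ is itself a conceptually clean way to encode the relationship between $P$ and $Q$ (it also underlies the observation in the proof of Theorem~\ref{thm:princ} that $P_m(A_1,\dots,A_m)^T = (-1)^m Q_m(A_1,\dots,A_m)$).
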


\begin{proof}
We prove this by induction on $m$, the cases $m=1,2$ being straightforward once we recall that $P_1(a_1)=a_1$, $P_2(a_1,a_2) = 1+a_1a_2$, $Q_1(a_1) = -a_1$, and $Q_2(a_1,a_2) = 1+a_2a_1$.  We let $(i,j)$ be $(1,2)$ if $m$ is odd and $(2,1)$ if $m$ is even; in either case we have $A_mu_j = u_iA'_m$.

For the first equation, multiplying the defining recurrence for $P_m$ on the left by $u_1$ gives
\begin{align*}
u_1P_m(A'_1,\dots,A'_m) &= (u_1P_{m-1}(A'_1,\dots,A'_{m-1}))A'_m + u_1P_{m-2}(A'_1,\dots,A'_{m-2}) \\
&= (P_{m-1}(A_1,\dots,A_{m-1})u_i)A'_m + P_{m-2}(A_1,\dots,A_m)u_j \\
&= P_{m-1}(A_1,\dots,A_{m-1})(A_mu_j) + P_{m-2}(A_1,\dots,A_m)u_j \\
&= P_m(A_1,\dots,A_m)u_j
\end{align*}
as desired.  For the second equation, we use the recurrence
\[ Q_m(A_1,\dots,A_m) = -A_mQ_{m-1}(A_1,\dots,A_{m-1}) + Q_{m-2}(A_1,\dots,A_{m-2}), \]
which is easily seen to be equivalent to the one which defines $Q_m$; multiplying on the right by $u_2$ then gives $Q_m(A_1,\dots,A_m)u_2 = u_iQ_m(A'_1,\dots,A'_m)$ by an identical argument.
\end{proof}

\begin{rem}
A summary of Proposition~\ref{propReptorus} and its proof is as follows. Since $\mu_1(d_1x_1^\vee) = -d_1(T_1')^{-1}b_1^\vee$ and $\mu_1(d_2x_2^\vee) = T_2d_2b_2^\vee$ and $T_1',T_2$ are invertible, to compute cohomology with respect to $\mu_1$ we can cancel all multiples of $x_1^\vee$ with all multiples of $b_1^\vee$, and similarly $x_2^\vee$ with $b_2^\vee$. The resulting quotient complex is supported only in degrees $0$ and $1$ and is given by the map
\begin{align*}
(\Mat_n(\coeffs))^2 &\to (\Mat_n(\coeffs))^m \\
(u_1,u_2) & \mapsto (u_1A_1'-A_1u_2,u_2A_2'-A_2u_1,u_1A_3'-A_3u_2,\ldots).
\end{align*}
Then $H^0\Hom(\rho,\rho')$ and $H^1\Hom(\rho,\rho')$ are the kernel and cokernel of this map, respectively.
\end{rem}

To conclude the computation of the cohomology category $H^*\Rep_n(\Lambda,\coeffs)$, we need to compute the composition maps $\mu_2$. For degree reasons, the only possible nonzero $\mu_2$ maps are
\begin{align*}
\mu_2 &:\thinspace H^0\Hom(\rho',\rho'') \otimes H^0\Hom(\rho,\rho') \to H^0\Hom(\rho,\rho'') \\
\mu_2 &:\thinspace H^0\Hom(\rho',\rho'') \otimes H^1\Hom(\rho,\rho') \to H^1\Hom(\rho,\rho'') \\
\mu_2 &:\thinspace H^1\Hom(\rho',\rho'') \otimes H^0\Hom(\rho,\rho') \to H^1\Hom(\rho,\rho'').
\end{align*}
Let $\rho=(A_1,\ldots,A_m)$, $\rho'=(A_1',\ldots,A_m')$, $\rho''=(A_1'',\ldots,A_m'')$ be objects in $\Rep_n(\Lambda,\coeffs)$. From \cite[Proposition~4.14]{NRSSZ}, we read off the parts of the $3$-copy differential $\partial^3$ that are relevant for calculating $\mu_2$:
\begin{align*}
\partial^3(a_j^{13}) &= y_{r(a_j)}^{12}a_j^{23}-a_j^{12}y_{c(a_j)}^{23}+ \cdots \\
\partial^3(x_j^{13}) &= (t_j^1)^{-1}y_{r(t_j)}^{12}t_j^2x_j^{23}-x_j^{12}y_{c(t_j)}^{23} + \cdots \\
\partial^3(y_j^{13}) &= y_j^{12}y_j^{23}.
\end{align*}

Dualizing these differentials gives the following. For $H^0 \otimes H^0$, we have:
\[
\mu_2(u_1'y_1^\vee+u_2'y_2^\vee,u_1y_1^\vee+u_2y_2^\vee) = -u_1u_1'y_1^\vee-u_2u_2'y_2^\vee.
\]
In terms of the isomorphism for $H^0$ from Proposition~\ref{prp:cohrep}, we have:
\begin{equation}
\mu_2((u_1',u_2'),(u_1,u_2)) = -(u_1u_1',u_2u_2').\label{eq:7}
\end{equation}
For $H^0 \otimes H^1$, we have:
\[
\mu_2(u_1'y_1^\vee+u_2'y_2^\vee,\sum_j w_ja_j^\vee+v_1x_1^\vee+v_2x_2^\vee) = 
-\sum_{j \text{ odd}} w_ju_2'a_j^\vee-\sum_{j \text{ even}} w_ju_1'a_j^\vee-v_1u_1'x_1^\vee-v_2u_2'x_2^\vee.
\]
In terms of the isomorphisms for $H^0$ and $H^1$ from Proposition~\ref{prp:cohrep}, we have:
\begin{equation}
\mu_2((u_1',u_2'),(w_1,\ldots,w_m)) = -(w_1u_2',w_2u_1',w_3u_2',\ldots).\label{eq:5}
\end{equation}

Similarly, for $H^1 \otimes H^0$, we find
\begin{equation}
\mu_2((w_1',\ldots,w_m'),(u_1,u_2)) = -(u_1w_1',u_2w_2',u_1w_3',\ldots).\label{eq:3}
\end{equation}

\section{The category \texorpdfstring{$\Sh(\Lambda,\coeffs)$}{Sh(Lambda,k)}}
\label{sec:category-shlambda}

In this Section we recall some of the theory of microsupport of constructible sheaves from \cite{KashiSchapi}, its application to symplectic and contact geometry as in \cite{GKS}, and the construction of the category $\Sh(\Lambda,\coeffs)$ from \cite{STZ}. We do not intend to be exhaustive, recalling only those facts which are necessary in order to have a working definition of microsupport and compute the category for our particular Legendrian $(2,m)$ torus knots.

\subsection{Dg version of the derived category}
\label{sec:dg-version-derived}

We describe here the construction of the dg-quotient of categories of complexes of sheaves as described in \cite{Drinfeld}. Letting $X$ be a topological space, we denote by $\widetilde{\Sh}(X,\coeffs)$ the dg-category of complexes of sheaves of $\coeffs$-modules on $X$ with constructible homology. Let $\widetilde{A}$ be any full subcategory of $\widetilde{\Sh}(X,\coeffs)$ stable under degree shift and whose objects contain all acyclic complexes. We denote by $A$ the quotient category by acyclic complexes as described in \cite[Section 3]{Drinfeld}. Briefly, the objects of $A$ are those of $\widetilde{A}$, and if $\cF$ is acyclic then $\hom_A(\cF,\cF)$ is equipped with one additional morphism $\varepsilon_F$ of degree $-1$ with differential $d\varepsilon_F=\mathrm{id}_F$.

\begin{rem}\label{rem:qi}
This extra morphism has the following effect. Let $f\in\hom(\cF,\cF')$ be a closed morphism which induces an isomorphism in the cohomological category (i.e.\ a quasi-isomorphism). Then the complex of sheaves $\cF_f:=\operatorname{cone}(f)$ is acyclic. We denote by $i_\cF,i_{\cF'},p_\cF,p_{\cF'}$ the canonical inclusions into and projections out of $\cF_f$. Note that $i_{\cF'}$ and $p_\cF$ are closed morphisms, whereas $di_\cF=i_{\cF'}\circ f$ and $dp_{\cF'}=f\circ p_\cF$. Thus in $A$ the map $g:=p_\cF\circ\varepsilon\circ i_{\cF'}$ is a closed map such that $fg+I_{\cF'}=d(p_{\cF'}\circ\varepsilon\circ i_{\cF'})$ and $gf+I_{\cF}=d(p_\cF\circ\varepsilon\circ i_{\cF})$, which means that $f$ is invertible up to homotopy. In particular, $[f]$ is invertible in the homology category $H_*(A)$.
\end{rem}
Indeed the category $H^0(\Sh(X,\coeffs))$ coincides with the derived category of constructible sheaves of $\coeffs$-modules on $X$, and we have $H^i(\hom(\cF,\cF'))=\Ext^i(\cF,\cF')$.

\subsection{Combinatorial description of constructible sheaves}
\label{sec:comb-descr-constr}
 Let $\mathcal{S}$ be a stratification of the plane $\mathbb{R}^2$, viewed as a category where there is a unique map from $a$ to $b$ iff $a\subset \overline{b}$. For a given stratum $a\in\mathcal{S}$, we let the star of $a$, denoted $s(a)$, be the union of all strata whose closure contains $a$. Here we will only consider stratifications which are \textit{regular cell complexes}, i.e.\ all strata and their stars are contractible in $S^2=\mathbb{R}^2\cup{\infty}$.

We denote by $\widetilde{\Fun}(\mathcal{S},\coeffs)$ the category of functors from $\mathcal{S}$ into chain complexes of $\coeffs$-modules (with acyclic stalk on the non compact strata). As in Section~\ref{sec:dg-version-derived}, given any full subcategory $\widetilde{A}$ whose objects contain all functors valued in acyclic complexes, we denote by $A$ its quotient by those functors.

An object $\cF\in \Sh(\R^2,\coeffs)$ whose homology sheaves are constructible with respect to the stratification $\mathcal{S}$ leads to an object $\Gamma_{\mathcal{S}}(\cF)$ defined by $\Gamma_{\mathcal{S}}(\cF)(a)=\cF(s(a))$.  This extends to a functor from $\Sh_{\mathcal{S}}(X,\coeffs)$ to $\Fun(\mathcal{S},\coeffs)$. It follows from \cite[Lemma 2.3.3]{Nadler_brane} that for regular cell complexes, this map is a quasi-equivalence.

\subsection{Microsupport}
\label{sec:microsupport}

In order to avoid notational confusion involving the sheaf cohomology of complexes of sheaves, given a sheaf $\cF$ of (possibly graded) $\coeffs$-vector spaces, we denote its $i$-th sheaf cohomology group by by $R^i\Gamma(\cF)$, the total group being $R\Gamma(\cF)=\bigoplus_iR^i\Gamma(\cF)$.

Let $\cF\in Ob(\Sh(X,\coeffs))$.  For $(x,p)\in T^*X$, we denote by $C_{x,p}$ the set of functions $f:X\rightarrow\R$ such that $f(x)=0$ and $df_x=p$. A point $(x,p)$ is \emph{characteristic} for $\cF$ if 
there is some $f \in C_{x,p}$ and a sequence of positive numbers $\varepsilon \to 0$ for which the restriction-induced map
\[ R\Gamma_{B(x,\varepsilon)}(\cF) \to R\Gamma_{\{f<0\} \cap B(x,\varepsilon)}(\cF) \]
is not an isomorphism. We denote by $SS(\mathcal{F})$ the closure of the set of characteristic points for $\mathcal{F}$. We denote by  $SS_\bullet(\mathcal{F})$ the set of points $(x,p)\in SS(\mathcal{F})$ such that $p\not= 0$. It follows from results in \cite{KashiSchapi} that if $SS_\bullet(\mathcal{F})$ is a submanifold of $T^*X$, then it is a conical Lagrangian for the standard symplectic structure. Its projectivization is a Legendrian submanifold of $S(T^*X)$.

Given a Legendrian submanifold $\Lambda$ of $S(T^*X)$, we denote by $\Sh(\Lambda,\coeffs)$ the full subcategory of $\Sh(X,\coeffs)$ generated by objects $\cF$ whose homology sheaves are constructible with respect to its front projection $\Pi(\Lambda) \subset X$ and such that $SS_\bullet(\mathcal{F})$ has projectivization inside $\Lambda$; if $X$ is a product $Y \times \R$, with $z$ the coordinate on $\R$, then we will also insist that $\cF$ has acyclic stalk for $z \ll 0$. It follows from \cite[Theorem 3.7]{GKS} and \cite[Theorem 4.10]{STZ} that $\Sh(\Lambda,\coeffs)$ is invariant under Legendrian isotopies of $\Lambda$.

We wish to study invariants of Legendrians in $\R^3$, which does not have the form $S(T^*X)$.  However, a 1-jet space $J^1(Y)$ naturally embeds in the subspace of $S(T^*(Y\times \R))$ consisting of ``graphical'' (i.e., non-vertical) contact elements, so that we can talk about sheaves on $Y\times \R$ with microsupport in a Legendrian in $J^1(Y)$.  In the case $Y=\R$ we thus get an invariant $\Sh(\Lambda,\coeffs)$ of $\Lambda \subset J^1(\R) = \R^3$, as desired.  This is the perspective used throughout \cite{STZ}, and the embedding $i: \R^3 \hookrightarrow S(T^*\R^2)$ is explicitly described in \cite[\S 2.1.1]{STZ}.  Its image is equal to the set of ``downward'' covectors, and it identifies the front ($xz$) projection of $\Lambda \subset \R^3$ with the projection of $i(\Lambda) \subset S(T^*\R^2)$ to the base $\R^2$.  Thus $\Sh(\Lambda,\coeffs)$ consists of complexes of sheaves on $\R^2$ whose homology is constructible with respect to the front projection $\Pi(\Lambda) \subset \R^2$, and \cite{STZ} gives a very concrete interpretation of the singular support condition, which we now describe.

Assume that $\Lambda \subset \R^3$ is generic so that its front $\Pi(\Lambda)$ has only cusps and double points as singularities. Then $\Pi(\Lambda)$ induces a stratification of $\R^2$ where the top-dimensional strata are the regions of the complement of the front, the $0$-dimensional strata are cusps and double points of the front projection, and the remainder of the front forms the $1$-dimensional strata. We denote by $\mathcal{S}_\Lambda$ this stratification. When this stratification is a regular cell complex, the condition for $\cF$ to have microsupport in $\Lambda$ translates to the following conditions for $\Gamma_{\mathcal{S}_\Lambda}(\cF)$:
\begin{enumerate}
\item \textbf{Arcs:} Near an arc $a$ we have a map $f$ going to the upper $2$-dimensional stratum $U$ and a map $g$ going to the lower $2$-dimensional stratum $D$; we require $g$ to be a quasi-isomorphism.
  \begin{figure}[ht!]
\[
\xymatrix{
\cF(U) \\ \cF(s(a)) \ar[u]^{f} \ar[d]_{\cong} \\ \cF(D)
}
\]
    \caption{The microsupport condition near an arc $a$.}
    \label{fig:muarc}
  \end{figure}
\item \textbf{Cusps:} Near a cusp $c$ we have an upper arc $u$ and a lower arc $d$, and regions $I$ and $O$ which are ``inside'' the cusp and ``outside'' the cusp respectively. We require the maps from $c$ to $u$, from $b$ to $O$, and from $a$ to $I$ to be quasi-isomorphisms as shown in Figure~\ref{fig:mucusp}.
  \begin{figure}[ht!]
\[
\xymatrix{
&&&& \cF(s(u)) \ar@/_/[dllll] 
\ar[d]_{\cong} \\
\cF(O) && \cF(c) \ar[ll]_(0.4){\cong} \ar[urr] 
\ar[rr]
\ar[drr]^{\cong} &&\cF(I) \\
&&&& \cF(s(d)) \ar@/^/[ullll]^{\cong} \ar[u]
}
\] 
    \caption{The microsupport condition near a cusp.}
  \label{fig:mucusp}
  \end{figure}
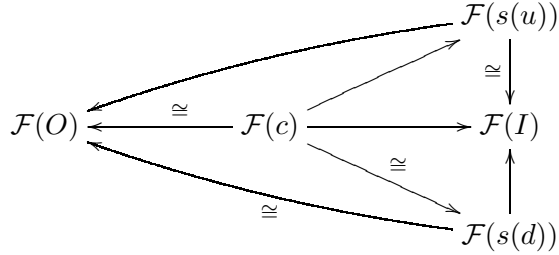
\item \textbf{Crossings:}
Near a crossing $x$ we have four $2$-dimensional strata, labeled $N$, $S$, $E$, and $W$; and four arcs $nw$, $ne$, $sw$, and $se$. We require all downward maps to be quasi-isomorphisms, as in Figure~\ref{fig:mucross}.
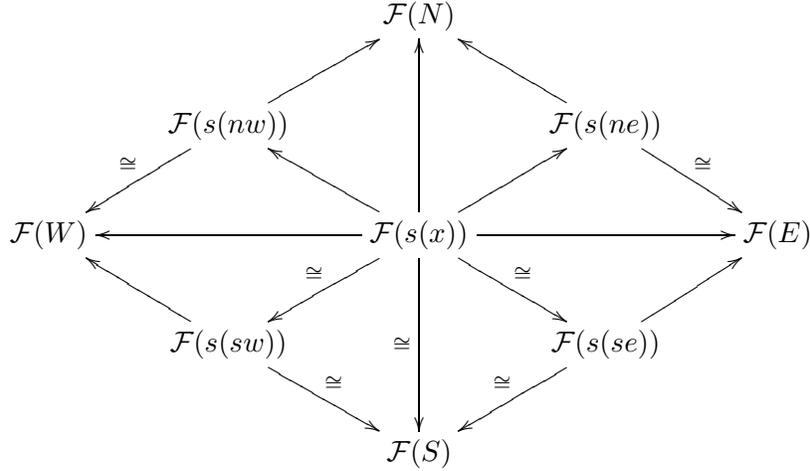
\begin{figure}[ht!]
\[
\xymatrix{
&& \cF(N) && \\
& \cF(s(nw)) \ar[ur] 
\ar[dl]_{\cong} && \cF(s(ne)) \ar[ul] 
\ar[dr]^{\cong} & \\
\cF(W) && \cF(s(x)) \ar[ll] 
\ar[ul] 
\ar[uu] 
\ar[ur] 
\ar[rr] 
\ar[dl]_{\cong} \ar[dd]_{\cong} \ar[dr]^{\cong} && \cF(E) \\
& \cF(s(sw)) \ar[ul] 
\ar[dr]^{\cong} && \cF(s(se)) \ar[dl]_{\cong} \ar[ur] 
& \\
&& \cF(S) &&
}
\]
  \caption{The microsupport condition near a crossing $c$.}
  \label{fig:mucross}
\end{figure}
We also require the total complex $$\mathrm{Tot}\big(\cF(s(x))\rightarrow \cF(s(nw))\oplus \cF(s(ne))\rightarrow \cF(N)\big)$$ to be acyclic.
\end{enumerate}

\subsection{Microlocal rank}
\label{sec:microlocal-rank}

Fix a Maslov potential $\mu$ for $\Lambda$, viewed as an integer $\mu(a)$ assigned to each arc $a$ of the stratification. Following \cite[Section 5.1]{STZ}, an object $\cF$ of $\Sh(\Lambda,\coeffs)$ is said to have \textit{microlocal rank} $n$ if for any arc $a$ of the stratification, if $f$ is the chain map of Figure~\ref{fig:muarc} then
\[ H^i(\operatorname{Cone}(f)) \cong \begin{cases} \coeffs^n & i = -\mu(a) \\ 0 & i \neq -\mu(a). \end{cases} \]
We note for future reference that this condition is satisfied for $\operatorname{Cone}(0 \to \coeffs^n)$ for $\mu(a) = 0$, and for $\operatorname{Cone}(\coeffs^n \to 0)$ for $\mu(a)=1$, where $\coeffs^n$ is in degree $0$ in each case.

We denote by $\Sh_n(\Lambda,\coeffs)$ the full subcategory of $\Sh(\Lambda,\coeffs)$ which consists of objects of microlocal rank $n$.  (In \cite{STZ} this category is labeled $\mathcal{C}_n(\Lambda)$.)  Then the equivalences of categories provided by \cite{GKS,STZ} in proving the invariance of $\Sh(\Lambda,\coeffs)$ carry sheaves of microlocal rank $n$ to sheaves of microlocal rank $n$, so $\Sh_n(\Lambda,\coeffs)$ is a Legendrian isotopy invariant of $\Lambda$.

\section{The cohomology sheaf category for \texorpdfstring{$\Lambda_m$}{Lambda m}}
\label{sec:sheaf-comp}

In this section, we compute the objects and morphisms for the cohomology category $H^*\Sh_n(\Lambda_m,\coeffs)$ of the sheaf category $\Sh_n(\Lambda_m,\coeffs)$ for the Legendrian knot $\Lambda_m$. The version of $\Lambda_m$ that we will use here is the rainbow closure of the $2$-braid $\sigma_1^m \in B_2$, as shown in Figure~\ref{fig:2mknot}, equipped with the binary Maslov potential given by $0$ on the two lower strands and $1$ on the two upper strands. This is not identical to the version of $\Lambda_m$ from Section~\ref{sec:repr-categ-2}, which we used to compute the representation category $\Rep_n(\Lambda_m,\coeffs)$; however, they are Legendrian isotopic, and both $\Rep_n$ and $\Sh_n$ are invariant (up to $A_\infty$ equivalence) under Legendrian isotopy. The advantage of considering the rainbow closure front for the sheaf computation is that, as explained in \cite{STZ}, this front has a binary Maslov potential and all objects are legible.

The objects of $H^*\Sh_n(\Lambda_m,\coeffs)$, which are the same as the objects of $\Sh_n(\Lambda_m,\coeffs)$, are presented in Section~\ref{ssec:sheaf-obj}; as in \cite{STZ}, these can be depicted as a collection of $\coeffs$-vector spaces with certain linear maps between them. The spaces of morphisms in $H^*\Sh_n(\Lambda_m,\coeffs)$ are given by $\Ext$ groups between objects. For Legendrian links such as $\Lambda_m$ with a binary Maslov potential, the only possible nonzero $\Ext$ groups are  $\Ext^0$, $\Ext^1$, and $\Ext^2$: as explained in \cite[Section 7.2.1]{STZ}, in this setting all objects correspond to $\coeffs[x,y]$-modules, and $\coeffs[x,y]$ admits a bimodule resolution of length $3$. We calculate $\Ext^0$ and $\Ext^1$ in Sections~\ref{ssec:ext0} and \ref{ssec:ext1}, respectively, and then we describe their composition in Section~\ref{sec:compositions-sh-}. 
The final $\Ext$ group, $\Ext^2$, vanishes, but the argument for this is more delicate and we leave it to Section~\ref{sec:vanishing-ext_2}.

\subsection{Objects in \texorpdfstring{$\Sh_n(\Lambda_m,\coeffs)$}{Sh n(Lambda,k)}}
\label{ssec:sheaf-obj}

Here we describe the objects in the category $\Sh_n(\Lambda_m,\coeffs)$ where $\Lambda_m$ is equipped with the obvious binary Maslov potential where the upper and lower strands at any cusp have potentials 1 and 0 respectively.

First, we note that any sheaves $\cF$ in $\Ob(\Sh_n(\Lambda_m,\coeffs))$ must have homology concentrated in degree $0$, as shown in \cite[Proposition 5.17]{STZ}. This implies that the map $$\left(\cdots\rightarrow C_{-1}\rightarrow \ker d_0\rightarrow 0\rightarrow0\rightarrow\cdots\right)\rightarrow\left(\cdots\rightarrow C_{-1}\rightarrow C_0\rightarrow C_1\rightarrow \cdots\right)$$ is a quasi-isomorphism, and so is $$\left(\cdots\rightarrow C_{-1}\rightarrow \ker d_0\rightarrow 0\rightarrow 0\rightarrow\cdots\right)\rightarrow\left(\cdots\rightarrow0\rightarrow H_0\rightarrow 0\rightarrow 0\rightarrow\cdots\right).$$

It follows from Remark \ref{rem:qi} that any object $\cF$ is homotopy equivalent to its $0$-th homology sheaf and thus we can restrict to objects which are actual sheaves. 

The sheaf is zero over the unbounded 2-dimensional stratum, and thus from the cusp condition (Figure \ref{fig:mucusp}) we get that the sheaf is also zero over the star of the two ``outside'' cusps and the star of the strand below them. Since the sheaf has microlocal rank $n$, the region $I$ just inside these cusps has $\cF(I)=V$ for some vector space $V$ of dimension $n$. Note that the topmost strand $u$ of the front is the upward strand for both cusps, and by the microsupport condition, we have an isomorphism $\cF(s(U)) \stackrel{\cong}{\to} \cF(I)$. We can use this isomorphism to change $\cF(s(U))$ to an isomorphic vector space, with the result that the map $\cF(s(U)) \to \cF(I)$ is the identity.

We next consider the inside cusps of $\Lambda_m$, which we label $c_l$ (left) and $c_r$ (right), and which have a common upper strand $u$ and a common inner $2$-dimensional stratum $I$ (different from $I$ in the previous paragraph). From the microsupport condition at the cusp (see Figure~\ref{fig:mucusp}), for each of $c=c_l$ and $c=c_r$ we have an isomorphism $\cF(s(c)) \stackrel{\cong}{\to} \cF(s(d))$, and by replacing $\cF(s(d))$ by an isomorphic vector space we can arrange that this isomorphism is the identity map. The microlocal rank condition implies that we can identify the vector space $\cF(I)$ with $V \oplus V$; we replace $\cF(s(u))$ by an isomorphic vector space so that the isomorphism $\cF(s(u)) \to \cF(I) = V \oplus V$ is the identity. Finally, for both $c_l$ and $c_r$ we have an isomorphism $\cF(s(d)) \to \cF(O)$, where $O$ is the same region in both cases; we change $\cF(O)$ so that this isomorphism is the identity for $c_l$, but we do not assume that the same is true for $c_r$ (we will use the extra freedom to define the map $\phi_{m+1}$ below). The sheaf around the cusps $c_l$ and $c_r$ is now given as in Figure~\ref{fig:cusp2m}.

  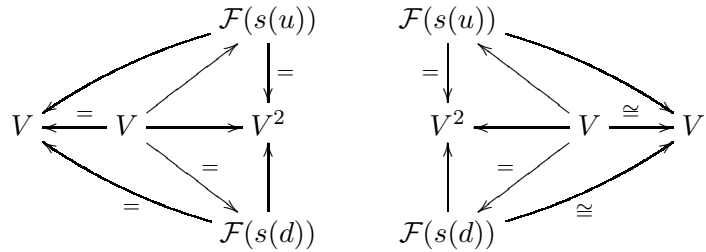
\begin{figure}[ht!]
\[
\xymatrix{
&& \cF(s(u)) \ar@/_/[dll] \ar[d]^{=} &
\cF(s(u)) \ar@/^/[drr] \ar[d]_{=} && \\
V & V \ar[l]_(0.4){=} \ar[ur] \ar[r] \ar[dr]^{=} & V^2 &
V^2 & V \ar[r]^(0.4){\cong} \ar[ul] \ar[l] \ar[dl]_{=} & V \\
&& \cF(s(d)) \ar@/^/[ull]^{=} \ar[u] &
\cF(s(d)) \ar@/_/[urr]_{\cong} \ar[u]
}
\] 
    \caption{The microsupport condition near the inside cusps $c_l$ (left) and $c_r$ (right).
}
  \label{fig:cusp2m}
  \end{figure}

We now turn our attention to crossings. Note that at all crossings, the stalk of $\cF$ along the south 2-dimensional stratum is zero. In addition, there are $m-1$ arcs in the front of $\Lambda_m$ that form the $ne$ arc for one crossing and the $nw$ arc for the crossing to its right; by changing the vector spaces associated to each of these arcs by an isomorphism, we can arrange that for all crossings, the maps $\cF(s(nw)) \to \cF(W)$ and $\cF(s(ne)) \to \cF(E)$ are the identity. The sheaf around each crossing is now given as in Figure~\ref{fig:cross2m}, where the bottom half of the sheaf as in Figure~\ref{fig:mucross} is zero and has been omitted. The acylicity condition at the crossing is then equivalent to the condition that $f_1 \oplus f_2$ is an isomorphism.

\begin{figure}[ht!]
\[
\xymatrix{
&& V^2 && \\
& \cF(s(nw)) \ar[ur]^{f_1} \ar[dl]_{=} && \cF(s(ne)) \ar[ul]_{f_2} \ar[dr]^{=} & \\
V && 0 \ar[ll]_{0} \ar[ul]_(0.4){0} \ar[uu]^{0} \ar[ur]^(0.4){0} \ar[rr]^{0} && V
}
\]
  \caption{The sheaf near a crossing of $\Lambda_m$.}
  \label{fig:cross2m}
\end{figure}
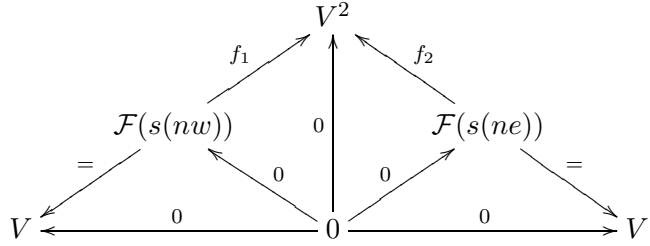

Altogether, the data needed to describe a sheaf with microsupport on $\Lambda_m$ are:
\begin{itemize}
\item For each inner Maslov-$0$ one dimensional stratum $a$ with $U$ the two dimensional stratum above it, a map $\cF(s(a))\rightarrow \cF(U)$.
\item Associated to the ``inside'' Maslov-$1$ one dimensional stratum, a map from $V^2$ to $V$.
\end{itemize}
These data must satisfy the following:
\begin{itemize}
\item If two one-dimensional strata meet at a crossing, the direct sum of the two associated maps is an isomorphism.
\item At the inner left cusp, the composition of the map associated to the lower strand with the one associated to the upper strand is the identity. 
\item At the inner right cusp, the composition of the map associated to the lower strand with the one associated to the upper strand is an isomorphism.
\end{itemize}

All this discussion implies that an object of the sheaf category $\Sh_n(\Lambda_m,\coeffs)$ corresponds to a $n$-dimensional vector space $V$ and a choice of maps $\phi_1,\ldots,\phi_{m+1} :\thinspace V \to V^2$ and $\psi :\thinspace V^2 \to V$ such that the following conditions hold:
\begin{itemize}
\item
$\psi\circ \phi_1=\Id$ and $\psi \circ \phi_{m+1}$ is an isomorphism;
\item
for $1\leq i\leq m$, the images of $\phi_i$ and $\phi_{i+1}$ are complementary in $V^2$: in particular, $\phi_i$ is injective for each $i$.
\end{itemize}
See Figure~\ref{fig:sheaf-object}. In Section~\ref{sec:example-hopf-link} we will demonstrate this construction in detail for the Hopf link $\Lambda_2$.

\begin{figure}
\[
\xymatrix{
&V \\
& V^2 \ar[u]^{\psi} \\
V \ar[ur]^{\phi_1} \ar@{.>}@/^1pc/[uur]^{=} & V \ar[u]_{\phi_2} & \cdots & V \ar[ull]_{\phi_{m+1}} \ar@{.>}@/_1pc/[uull]_{\cong}
}
\] 
\caption{An object of $\Sh_n(\Lambda_m,\coeffs)$.}\label{fig:sheaf-object}
\end{figure}

We now reformulate these conditions.
Since $\psi \circ \phi_1 = \Id$, we can choose a basis of $V^2$ so that $\phi_1 = \left( \begin{matrix} 0 \\ 1 \end{matrix} \right)$ and $\psi = \left( \begin{matrix} 0 & 1 \end{matrix} \right)$.

Now the images of $\phi_1$ and $\phi_2$ are complementary, which means that we can choose a basis for the domain of $\phi_2$ for which $\phi_2 = \left( \begin{matrix} 1 \\ A_1 \end{matrix} \right)$ for some linear $A_1 :\thinspace V \to V$. Similarly, for $3 \leq j \leq m+1$, the image of $\phi_{j-1}$ is complementary to the image of $\phi_{j-2}$ as well as to the image of $\phi_{j}$, and so we can inductively choose bases for $\phi_j$ and linear maps $A_{j-1} :\thinspace V \to V$ such that $\phi_j = \phi_{j-2} + \phi_{j-1} \circ A_{j-1}$.

Thus we obtain a sequence of endomorphisms of $V$, $A_1,\ldots,A_m$, such that if we construct the following chain of maps:
\begin{equation}
\xymatrix{
V \oplus V \ar[rr]^{\left( \begin{smallmatrix} 0 & 1 \\ 1 & A_m \end{smallmatrix} \right)} &&
\cdots \ar[rr]^{\left( \begin{smallmatrix} 0 & 1 \\ 1 & A_2 \end{smallmatrix} \right)} &&
V \oplus V  \ar[rr]^{\left( \begin{smallmatrix} 0 & 1 \\ 1 & A_1 \end{smallmatrix} \right)} &&
V^2
}
\label{eq:obj-chain}
\end{equation}
then for each $k=1,\ldots,m$, the sum $\phi_k \oplus \phi_{k+1}$ is the composition of the final $k$ arrows in this chain.

It follows from \eqref{eq:8} that the matrix for $\phi_k$ is $\left( \begin{matrix} P_{k-1}(A_2,\ldots,A_k) \\ P_k(A_1,\ldots,A_k) \end{matrix} \right)$. The inner right cusp now dictates that $\psi \circ \phi_{m+1}$ is an isomorphism on $V$; since $\psi = \begin{pmatrix} 0 \\ 1 \end{pmatrix}$, this simply states that
$P_m(A_1,\ldots,A_m)$ is invertible. We conclude the following:

\begin{prop}\label{PropObjSH}
The objects in $\Sh_n(\Lambda_m,\coeffs)$ are in correspondence with choices of $A_1,\ldots,A_m \in \End V$ for which $P_m(A_1,\ldots,A_m) \in \End V$ is invertible. Any such $(A_1,\ldots,A_m)$ determines an object as in Figure~\ref{fig:sheaf-object} by the conditions that $\psi = \left( \begin{smallmatrix} 0 \\ 1 \end{smallmatrix} \right)$ and the final $k$ arrows in \eqref{eq:obj-chain} compose to give $\phi_k \oplus \phi_{k+1}$.
\end{prop}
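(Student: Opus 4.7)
The plan is essentially to organize the construction already laid out in the discussion preceding the proposition into a clean bijective correspondence. First I would observe that the analysis in Section~\ref{ssec:sheaf-obj} has already reduced the data of an object in $\Sh_n(\Lambda_m,\coeffs)$ to a choice of vector space $V$ of dimension $n$, maps $\phi_1,\ldots,\phi_{m+1}:V\to V^2$ and $\psi:V^2\to V$, subject to the conditions: $\psi\circ\phi_1=\Id$, $\psi\circ\phi_{m+1}$ is an isomorphism, and $\img(\phi_i)$ and $\img(\phi_{i+1})$ are complementary in $V^2$ for $1\le i\le m$.

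Next I would perform the explicit normalizations used in the preamble. Since $\psi\circ\phi_1=\Id$, one can pick a basis of the codomain $V^2$ in which $\phi_1=\bigl(\begin{smallmatrix}0\\1\end{smallmatrix}\bigr)$ and $\psi=\bigl(\begin{smallmatrix}0&1\end{smallmatrix}\bigr)$. Then I would inductively choose bases for the domains of $\phi_2,\ldots,\phi_{m+1}$ using the complementarity of consecutive images: because $\img(\phi_{j-1})$ and $\img(\phi_{j-2})$ are complementary, one can uniquely expand $\phi_j=\phi_{j-2}+\phi_{j-1}\circ A_{j-1}$ for some $A_{j-1}\in\End V$. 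This yields an ordered tuple $(A_1,\ldots,A_m)$ from any object (with our fixed normalizations).

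I would then identify the matrices of the $\phi_k$ using equation~\eqref{eq:8}: the chain \eqref{eq:obj-chain} is exactly the product of blocks $\bigl(\begin{smallmatrix}0&1\\1&A_j\end{smallmatrix}\bigr)$ appearing in that identity, so $\phi_k=\bigl(\begin{smallmatrix}P_{k-1}(A_2,\ldots,A_k)\\ P_k(A_1,\ldots,A_k)\end{smallmatrix}\bigr)$. In particular $\psi\circ\phi_{m+1}=P_m(A_1,\ldots,A_m)$, so the remaining inner-right-cusp condition that $\psi\circ\phi_{m+1}$ is an isomorphism becomes the condition that $P_m(A_1,\ldots,A_m)$ is invertible. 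For the reverse direction, I would observe that given any tuple $(A_1,\ldots,A_m)$ with $P_m$ invertible, formula~\eqref{eq:obj-chain} defines maps $\phi_k\oplus\phi_{k+1}$, and their components $\phi_k$ together with $\psi=\bigl(\begin{smallmatrix}0&1\end{smallmatrix}\bigr)$ automatically satisfy all the required conditions: complementarity of consecutive images follows from invertibility of each block $\bigl(\begin{smallmatrix}0&1\\1&A_j\end{smallmatrix}\bigr)$, and $\psi\circ\phi_1=\Id$ while $\psi\circ\phi_{m+1}=P_m$ is invertible by hypothesis.

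The main step requiring care is the bookkeeping for the bijection: on the object side one only has a sheaf up to isomorphism, and I have used the freedom to rescale stalks (by the isomorphisms of Figures~\ref{fig:cusp2m} and \ref{fig:cross2m}) to normalize $\phi_1$, $\psi$, and the domain bases of the other $\phi_k$'s. I would note that once the normalizations at the outer cusps, the topmost strand, and the inner left cusp are fixed as in the preamble, the remaining basis choices on each $\cF(s(a))$ for inner Maslov-$0$ strata are precisely absorbed into the inductive construction of the $A_j$'s, so the assignment $\cF\mapsto(A_1,\ldots,A_m)$ is well-defined and inverse to the construction in \eqref{eq:obj-chain}. No further obstruction remains, and the proposition follows.
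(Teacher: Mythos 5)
Your proposal is correct and takes essentially the same route as the paper: the proposition is really a summary of the normalization argument carried out in the discussion of Section~\ref{ssec:sheaf-obj} preceding it, and you have faithfully reorganized that discussion (microsupport conditions $\Rightarrow$ the $\phi_k,\psi$ data with complementarity and cusp conditions $\Rightarrow$ basis normalization of $\phi_1,\psi$ $\Rightarrow$ inductive extraction of $A_j$ $\Rightarrow$ identification of $\phi_k$ via equation~\eqref{eq:8} $\Rightarrow$ the right-cusp condition becomes invertibility of $P_m$) into a bidirectional argument. Your explicit note about the role of the normalization choices in making the correspondence well-defined is a helpful clarification that the paper leaves implicit.
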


\subsection{Example: the Hopf link}
\label{sec:example-hopf-link}

Here we illustrate the identification of objects in $\Sh_n(\Lambda_m,\coeffs)$ with the configuration shown in Figure~\ref{fig:sheaf-object}, in the case $m=2$ (the Hopf link). This link is shown in Figure~\ref{fig:hopf}, where we have labeled the strata as determined by the front projection: the $U_i$ are 2-dimensional, the $a_i$ and $b_i$ are $1$-dimensional, and the $x_i$ and $y_i$ are $0$ dimensional.

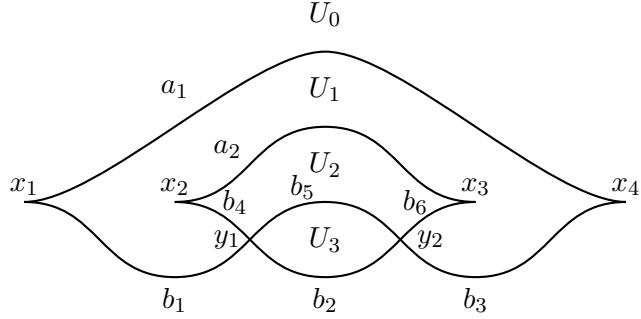
\begin{figure}[ht!]
  \centering
\begin{tikzpicture}
    \draw [thick] (0,1) .. controls +(1,0) and +(-1,0)
    .. (2,0) ..  controls +(1,0) and +(-1,0) .. (4,1); \draw [thick] (4,1) .. controls +(-1,0) and +(1,0) ..  (2,2) .. controls +(-1,0) and +(1,0) ..  (0,1) ;
\draw [thick] (-2,1) .. controls +(1,0) and +(-1,0) .. (0,0) ..  controls
    +(1,0) and +(-1,0) .. (2,1).. controls +(1,0) and +(-1,0) .. (4,0)..  controls
    +(1,0) and +(-1,0) .. (6,1).. controls +(-1,0) and +(1,0) ..  (2,3) .. controls +(-1,0) and +(1,0) ..  (-2,1) ;
\node at (-2,1.2) {$x_1$};
\node at (0,1.2) {$x_2$};
\node at (4,1.2) {$x_3$};
\node at (6,1.2) {$x_4$};
\node at (0.7,0.5) {$y_1$};
\node at (3.4,0.5) {$y_2$};
\node at (2,3.5) {$U_0$};
\node at (2,2.5) {$U_1$};
\node at (2,1.5) {$U_2$};
\node at (2,0.5) {$U_3$};
\node at (0,2.5) {$a_1$};
\node at (0.7,1.7) {$a_2$};
\node at (0,-0.3) {$b_1$};
\node at (2,-0.3) {$b_2$};
\node at (4,-0.3) {$b_3$};
\node at (0.8,1) {$b_4$};
\node at (1.7,1.2) {$b_5$};
\node at (3.2,1) {$b_6$};
  \end{tikzpicture}
    \caption{The front projection of the Hopf link.}
  \label{fig:hopf}
\end{figure}

The category $\mathcal{S}$ corresponding to this stratification as in Section \ref{sec:comb-descr-constr} is the transitive closure of the left hand diagram of Figure~\ref{fig:hopfsheaf}, in which we omit the identity self-morphisms for clarity. 

Applying the functor $\Gamma_{\mathcal{S}}$ to an object $\cF$ in the sheaf category leads to the right hand diagram in Figure~\ref{fig:hopfsheaf}. Here by applying automorphisms of the blue vector spaces, we can turn isomorphisms into identity maps as shown. (There is one more isomorphism between $V$ and $V$ that we could also turn into the identity, but as discussed in Section~\ref{ssec:sheaf-obj}, it is more convenient to leave this as an arbitrary isomorphism for the purposes of identifying objects in the sheaf category with matrices $A_1,A_2$.)
The information in the right hand diagram is then encoded by the red maps $\phi_1,\phi_2,\phi_3,\psi$; that is, precisely the picture in Figure~\ref{fig:sheaf-object} for $m=2$.

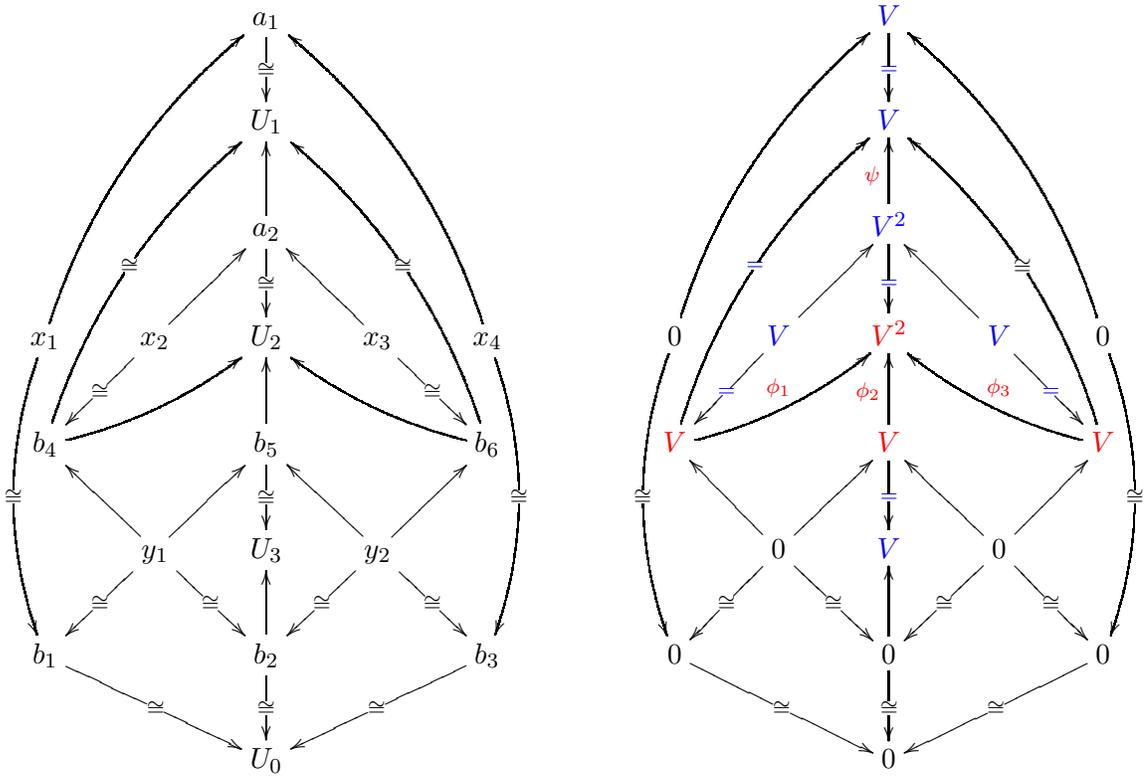
\begin{figure}[ht!]
\[
\xymatrix{
&& a_1 \ar[d]|{\cong} && &&
&& {\color{blue} V} \ar@[blue][d]|{\color{blue} =} && \\
&& U_1 && &&
&& {\color{blue} V} && \\
&& a_2 \ar[u] \ar[d]|{\cong} && &&
&& {\color{blue} V^2} \ar@[red][u]^{\color{red} \psi} \ar@[blue][d]|{\color{blue} =} &&\\
x_1 \ar@/^1pc/[uuurr] \ar@/_1pc/[ddd]|\cong & x_2 \ar[ur] \ar[dl]|{\cong}& U_2 & x_3 \ar[ul] \ar|{\cong}[dr] & x_4 \ar@/_1pc/[uuull] \ar@/^1pc/[ddd]|{\cong} && 
0 \ar@/^1pc/[uuurr] \ar@/_1pc/[ddd]|\cong & {\color{blue} V} \ar[ur] \ar@[blue][dl]|{\color{blue} =}& {\color{red} V^2} & {\color{blue} V} \ar[ul] \ar@[blue]|{\color{blue} =}[dr] & 0 \ar@/_1pc/[uuull] \ar@/^1pc/[ddd]|{\cong} \\
b_4 \ar@/^1pc/[uuurr]|{\cong} \ar@/_/[urr] && b_5 \ar[u] \ar[d]|{\cong} && b_6 \ar@/_1pc/[uuull]|{\cong} \ar@/^/[ull] &&
{\color{red} V} \ar@[blue]@/^1pc/[uuurr]|{\color{blue} =} \ar@[red]@/_/[urr]^{\color{red} \phi_1} && {\color{red} V} \ar@[red][u]^{\color{red} \phi_2} \ar@[blue][d]|{\color{blue} =} && {\color{red} V} \ar@/_1pc/[uuull]|{\cong} \ar@[red]@/^/[ull]_{\color{red} \phi_3}  \\
& y_1 \ar[ul] \ar[ur] \ar[dl]|{\cong} \ar[dr]|{\cong} & U_3 & y_2 \ar[ul] \ar[ur] \ar[dl]|{\cong} \ar[dr]|{\cong} & &&
& 0 \ar[ul] \ar[ur] \ar[dl]|{\cong} \ar[dr]|{\cong} & {\color{blue} V} & 0 \ar[ul] \ar[ur] \ar[dl]|{\cong} \ar[dr]|{\cong} &\\
b_1 \ar[drr]|{\cong} && b_2 \ar[d]|{\cong} \ar[u] && b_3 \ar[dll]|{\cong} &&
0 \ar[drr]|{\cong} && 0 \ar[d]|{\cong} \ar[u] && 0 \ar[dll]|{\cong}  \\
&& U_0 && &&
&& 0 &&
}
\]
\caption{Left: the stratification $\mathcal{S}$ of the front of the Hopf link as a category; right: the result of applying the functor $\Gamma_{\mathcal{S}}$ to an object in the sheaf category. There is also a (zero) map from $a_1$ to $U_0$, omitted for readability.}
\label{fig:hopfsheaf}
\end{figure}

\subsection{\texorpdfstring{$\Ext^0$}{Ext0} in the sheaf category}
\label{ssec:ext0}
We next turn to $\Ext^0$ between two objects in $\Sh_n(\Lambda_m,\coeffs)$. Given two objects, a morphism between them in $\Ext^0$ is determined by a morphism between the strata in the stratification determined by the front of $\Lambda_m$. In terms of the diagram from Figure~\ref{fig:sheaf-object}, a morphism is a choice of $u_0,u_1,\ldots,u_{m+1} :\thinspace V \to V$ and $v :\thinspace V^2 \to V^2$ such that the following diagram commutes:
\begin{equation}
\begin{gathered}
\def\sheafbox{\save [].[ddrrr]*[F--:<3pt>]\frm{}\restore\phantom{V^2}}%
\xymatrix{
\sheafbox&V \ar[rrrrr]^(.6){u_0} &&&&\sheafbox& V\\
& V^2 \ar[u]^{\psi} \ar[rrrrr]^(.6){v}&&&&& V^2 \ar[u]^{\psi'} \\
V \ar[ur]^{\phi_1} \ar@/_3pc/[rrrrr]_{u_1} & V \ar[u]_{\phi_2} \ar@/_3pc/[rrrrr]_{u_2} & \cdots & V \ar[ull]_{\phi_{m+1}} \ar@/_3pc/[rrrrr]_{u_{m+1}} &&
V \ar[ur]^{\phi_1'} & V \ar[u]_{\phi_2'} & \cdots & V \ar[ull]_{\phi_{m+1}'} 
}
\end{gathered}
\label{eq:mor-chain}
\end{equation}

Now describe two objects in $\Sh_n(\Lambda_m,\coeffs)$ by $(A_1,\ldots,A_m)$ and $(A_1',\ldots,A_m')$ as in Proposition~\ref{PropObjSH}. Then a morphism from $(A_1,\ldots,A_m)$ to $(A_1',\ldots,A_m')$
is a choice of $u_0,u_1,\ldots,u_{m+1},v$ such that $u_0 \circ \psi = \psi' \circ v$ and the following diagram commutes:
\[
\xymatrix{
V \oplus V \ar[rr]^{\left( \begin{smallmatrix} 0 & 1 \\ 1 & A_m \end{smallmatrix} \right)} 
\ar[dd]_{\left( \begin{smallmatrix} u_m & 0 \\ 0 & u_{m+1} \end{smallmatrix} \right)}
&&
\cdots \ar[r] & V \oplus V \ar[rr]^{\left( \begin{smallmatrix} 0 & 1 \\ 1 & A_2 \end{smallmatrix} \right)} 
\ar[dd]_{\left( \begin{smallmatrix} u_2 & 0 \\ 0 & u_{3} \end{smallmatrix} \right)}&&
V \oplus V  \ar[rr]^{\left( \begin{smallmatrix} 0 & 1 \\ 1 & A_1 \end{smallmatrix} \right)} 
\ar[dd]_{\left( \begin{smallmatrix} u_1 & 0 \\ 0 & u_{2} \end{smallmatrix} \right)}
&&
V^2 \ar[dd]^v \\ \\
V \oplus V \ar[rr]^{\left( \begin{smallmatrix} 0 & 1 \\ 1 & A_m' \end{smallmatrix} \right)} &&
\cdots \ar[r] & V \oplus V \ar[rr]^{\left( \begin{smallmatrix} 0 & 1 \\ 1 & A_2' \end{smallmatrix} \right)} &&
V \oplus V  \ar[rr]^{\left( \begin{smallmatrix} 0 & 1 \\ 1 & A_1' \end{smallmatrix} \right)} &&
V^2.
}
\]

The commutativity of the $k$-th square from the right (for $2 \leq k \leq m$) gives the pair of equations $u_{k+1}=u_{k-1}$ and $A_k'u_{k+1} = u_kA_k$. 
The commutativity of the rightmost square gives $v = \left( \begin{smallmatrix} u_2 & 0 \\ A_1'u_2-u_1A_1 & u_1 \end{smallmatrix} \right)$, whence the relation $u_0 \circ \psi = \psi' \circ v$ combined with $\psi = \psi' = \left( \begin{matrix} 0 & 1 \end{matrix} \right)$ implies $u_0=u_1$ and $A_1'u_2-u_1A_1 = 0$.
Since $\Ext^0$ is given by morphisms between objects in $\Sh_n(\Lambda_m,\coeffs)$, we conclude the following result.

\begin{prop}
Let $\cF = (A_1,\ldots,A_m)$ and $\cF' = (A_1',\ldots,A_m')$ denote objects in $\Sh_n(\Lambda_m,\coeffs)$. Then we have:
\label{prp:ext0}
\begin{align*}
\Ext^0(\cF,\cF') \cong \{(u_1,u_2) \in (\End V)^2 &: A_k'u_1 = u_2A_k \text{ for all even $k$ and } \\
& \quad A_k'u_2 = u_1A_k \text{ for all odd $k$}\}.
\end{align*}
Explicitly, given $u_1,u_2$, we can construct an element of $\Ext^0$ as depicted in \eqref{eq:mor-chain}, as follows: define $u_0=u_1$, $u_k=u_1$ for $k \geq 1$ odd, $u_k = u_2$ for $k \geq 2$ even, and $v = \left( \begin{smallmatrix} u_2 & 0 \\ 0 & u_1 \end{smallmatrix} \right)$.
\end{prop}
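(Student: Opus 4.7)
The plan is to apply the combinatorial setup developed just above the statement: a degree-$0$ morphism $\cF \to \cF'$ in $\Sh_n(\Lambda_m,\coeffs)$ is a natural transformation $\Gamma_{\mathcal{S}_\Lambda}(\cF) \to \Gamma_{\mathcal{S}_\Lambda}(\cF')$, and in the normalized form of Section~\ref{ssec:sheaf-obj} such a morphism is specified by maps $u_0, u_1, \ldots, u_{m+1}: V \to V$ and $v: V^2 \to V^2$ making \eqref{eq:mor-chain} commute. I would then use the factorization of $\phi_k \oplus \phi_{k+1}$ from Proposition~\ref{PropObjSH} to rewrite this as commutativity of a single ladder of $2 \times 2$ block matrices with horizontal arrows $\left(\begin{smallmatrix} 0 & 1 \\ 1 & A_j \end{smallmatrix}\right)$ on top and $\left(\begin{smallmatrix} 0 & 1 \\ 1 & A_j' \end{smallmatrix}\right)$ on the bottom, and vertical arrows $\left(\begin{smallmatrix} u_k & 0 \\ 0 & u_{k+1} \end{smallmatrix}\right)$.

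Next I would read off the constraints square by square, from right to left. The $k$-th interior square (for $2 \le k \le m$) yields $u_{k+1} = u_{k-1}$ together with $A_k' u_{k+1} = u_k A_k$. Iterating the first of these forces $u_k = u_1$ for all odd $k$ and $u_k = u_2$ for all even $k \ge 2$; substituting this into the second equation produces the two families of commutation relations in the proposition, one for odd $k$ and one for even $k$. The rightmost square determines $v$ uniquely as $\left(\begin{smallmatrix} u_2 & 0 \\ A_1' u_2 - u_1 A_1 & u_1 \end{smallmatrix}\right)$, and then the requirement $u_0 \circ \psi = \psi' \circ v$ (with $\psi = \psi' = (0\ 1)$) imposes $u_0 = u_1$ and $A_1' u_2 - u_1 A_1 = 0$, which is exactly the odd-$k$ condition in the case $k = 1$.

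For the converse, I would construct a morphism from any pair $(u_1, u_2)$ obeying the commutation relations by defining $u_0, u_k, v$ as in the statement of the proposition and verifying commutativity of each square of \eqref{eq:mor-chain} directly using the same identities. I do not anticipate any substantive obstacle here; the work is essentially bookkeeping, and the only real pitfall is parity bookkeeping, in particular checking that the ``extra'' equation arising from the topmost arrow $u_0 \circ \psi = \psi' \circ v$ does not impose a genuinely new constraint beyond those already satisfied by $(u_1, u_2)$.
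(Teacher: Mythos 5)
Your proposal follows essentially the same route as the paper's proof: both identify a degree-$0$ morphism with a commuting ladder of $2\times 2$ block matrices obtained from Proposition~\ref{PropObjSH}, read off $u_{k+1}=u_{k-1}$ and $A_k'u_{k+1}=u_kA_k$ from the interior squares, solve the rightmost square for $v=\left(\begin{smallmatrix} u_2&0\\A_1'u_2-u_1A_1&u_1\end{smallmatrix}\right)$, and extract $u_0=u_1$ and $A_1'u_2=u_1A_1$ from $u_0\psi=\psi'v$. This is exactly the paper's argument; your noted pitfall is not an issue, since the $k=1$ relation forces the off-diagonal block of $v$ to vanish, making the ``explicit'' $v$ in the statement consistent with the one derived from the ladder.
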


\subsection{\texorpdfstring{$\Ext^1$}{Ext1} in the sheaf category}
\label{ssec:ext1}
We now turn our attention to $\Ext^1$. We use the description of $\Ext^1(\cF,\cF')$ as extensions $\cG$ of $\cF$ by $\cF'$ (see \cite[Theorem~III.2.4]{HiltonStamm} or \cite[Exercise~6.1]{Hartshorne}), i.e.\ as short exact sequences \[
0 \rightarrow \cF' \rightarrow \cG \rightarrow \cF \rightarrow 0.
\]
 Let $\cF = (A_1,\ldots,A_m)$ and $\cF' = (A_1',\ldots,A_m')$ be objects in $\Sh_n(\Lambda_m,\coeffs)$. Let $\cG$ denote an extension of $\cF$ by $\cF'$. It follows from the triangle inequality \cite[Proposition~5.1.3]{KashiSchapi} that $SS_\bullet(\cG)\subset SS_\bullet(\cF)\cup SS_\bullet(\cF')$. This implies that $SS_\bullet(\cG)$ has projectivization inside $\Lambda$. Furthermore, the microlocal rank of $\cG$ is $2n$, so we can represent $\cG$ by a diagram
\begin{equation}
\begin{gathered}
\xymatrix{
&V^2 \\
& V^4 \ar[u]^{\Psi} \\
V^2 \ar[ur]^{\Phi_1}  & V^2 \ar[u]_{\Phi_2} & \cdots & V^2. \ar[ull]_{\Phi_{m+1}} 
}
\end{gathered}
\label{eq:ext1}
\end{equation}
The extension
\[
0 \rightarrow \cF' \rightarrow \cG \rightarrow \cF \rightarrow 0
\]
is given componentwise by a collection of short exact sequences $0 \rightarrow V \rightarrow V^2 \rightarrow V \rightarrow 0$ and one short exact sequence $0 \rightarrow V^2 \rightarrow V^4 \rightarrow V^2 \rightarrow 0$. We can choose bases for the components $V^2,V^4$ of $\cG$ such that all maps from $\cF'$ to $\cG$ are given by $\left( \begin{matrix} 1 \\ 0 \end{matrix} \right)$ (for $V \rightarrow V^2$) and $\left( \begin{smallmatrix} 1 & 0 \\ 0 & 0 \\ 0 & 1 \\ 0 & 0 \end{smallmatrix} \right)$ (for $V^2 \rightarrow V^4$), while all maps from $\cG$ to $\cF$ are given by $\left( \begin{matrix} 0 & 1 \end{matrix} \right)$ (for $V^2 \rightarrow V$) and $\left( \begin{smallmatrix} 0 & 1 & 0 & 0 \\ 0 & 0 & 0 & 1 \end{smallmatrix} \right)$ (for $V^4 \rightarrow V^2$).

As before, we represent $\cF$ by a sequence of maps
\[
\xymatrix{
V \oplus V \ar[rr]^{\left( \begin{smallmatrix} 0 & 1 \\ 1 & A_m \end{smallmatrix} \right)} &&
\cdots \ar[rr]^{\left( \begin{smallmatrix} 0 & 1 \\ 1 & A_2 \end{smallmatrix} \right)} &&
V \oplus V  \ar[rr]^{\left( \begin{smallmatrix} 0 & 1 \\ 1 & A_1 \end{smallmatrix} \right)} &&
V^2
}
\]
and similarly for $\cF'$. (In each case the top map from $V^2$ to $V$ is given by
$\psi = \begin{pmatrix}0&1\end{pmatrix}$, and the $A_i$ determine $\phi_1,\ldots,\phi_{m+1}$.)
We can analogously represent $\cG$ by $\Psi :\thinspace V^4 \to V^2$ along with a sequence of maps
\begin{equation}
\xymatrix{
V^2 \oplus V^2 \ar[rr]^{\Omega_m} &&
\cdots \ar[rr]^{\Omega_2} &&
V^2 \oplus V^2  \ar[rr]^{\Omega_1} &&
V^4
}
\label{eq:Omega}
\end{equation}
so that for $k=1,\ldots,m$, $\Phi_k \oplus \Phi_{k+1}$ is the composition of the final $k$ arrows. Note in particular that for each $k \geq 2$, $\Omega_k :\thinspace V^2 \oplus V^2 \to V^2 \oplus V^2$ sends the second $V^2$ factor to the first $V^2$ factor by the identity map; that is, the matrix for $\Omega_k$ is of the form $\left( \begin{smallmatrix} 0 & 0 & \ast & \ast \\ 0 & 0 & \ast & \ast \\ 1 & 0 & \ast & \ast \\ 0 & 1 & \ast & \ast \end{smallmatrix} \right)$.

Now let $\cG$ be an extension of $\cF$ by $\cF'$. The top maps in $\cF',\cG,\cF$ give the commutative diagram
\[
\xymatrix{
V \ar[rr]^{\left( \begin{smallmatrix} 1 \\ 0 \end{smallmatrix} \right)} && V^2 \ar[rr]^{\left( \begin{smallmatrix} 0 & 1 \end{smallmatrix} \right)} && V \\
V^2 \ar[u]^{\left( \begin{smallmatrix} 0 & 1 \end{smallmatrix} \right)} \ar[rr]^{\left( \begin{smallmatrix} 1 & 0 \\ 0 & 0 \\ 0 & 1 \\ 0 & 0 \end{smallmatrix} \right)} &&
V^4 \ar[u]^{\Psi} \ar[rr]^{\left( \begin{smallmatrix} 0 & 1 & 0 & 0 \\ 0 & 0 & 0 & 1 \end{smallmatrix} \right)} && V^2 \ar[u]_{\left( \begin{smallmatrix} 0 & 1 \end{smallmatrix} \right)},
}
\]
whence $\Psi = \left( \begin{matrix} 0 & u_0 & 1 & v_0 \\ 0 & 0 & 0 & 1 \end{matrix} \right)$ for some $u_0,v_0\in\End V$. The commutativity involving the rest of the maps $\Phi_1,\ldots,\Phi_{m+1}$ can be expressed as the following commutative diagram:
\[
\xymatrix{
V \oplus V \ar[rr]^{\left( \begin{smallmatrix} 0 & 1 \\ 1 & A_m' \end{smallmatrix} \right)} \ar[d] &&
\cdots \ar[rr]^{\left( \begin{smallmatrix} 0 & 1 \\ 1 & A_2 \end{smallmatrix} \right)} &&
V \oplus V  \ar[rr]^{\left( \begin{smallmatrix} 0 & 1 \\ 1 & A_1' \end{smallmatrix} \right)}  \ar[d]&&
V^2  \ar[d] \\
V^2 \oplus V^2 \ar[rr]^{\Omega_m} \ar[d] &&
\cdots \ar[rr]^{\Omega_2} &&
V^2 \oplus V^2  \ar[rr]^{\Omega_1} \ar[d] &&
V^4 \ar[d] \\
V \oplus V \ar[rr]^{\left( \begin{smallmatrix} 0 & 1 \\ 1 & A_m \end{smallmatrix} \right)} &&
\cdots \ar[rr]^{\left( \begin{smallmatrix} 0 & 1 \\ 1 & A_2' \end{smallmatrix} \right)} &&
V \oplus V  \ar[rr]^{\left( \begin{smallmatrix} 0 & 1 \\ 1 & A_1 \end{smallmatrix} \right)} &&
V^2,
}
\]
where all the vertical maps between the first and second rows are $\left( \begin{smallmatrix} 1 & 0 \\ 0 & 0 \\ 0 & 1 \\ 0 & 0 \end{smallmatrix} \right)$ and all the vertical maps between the second and third rows are $\left( \begin{smallmatrix} 0 & 1 & 0 & 0 \\ 0 & 0 & 0 & 1 \end{smallmatrix} \right)$.

Commutativity of the $k$-th set of boxes from the right in the above diagram is now easily seen to imply that:
\[
\Omega_k = \left( \begin{matrix} 0 & \ast & 1 & \ast \\
0 & 0 & 0 & 1 \\
1 & \ast & A_k' & \ast \\
0 & 1 & 0 & A_k
\end{matrix}
\right).
\]

Along with the restriction on the form of $\Omega_k$ for $k \geq 2$, we conclude that extensions of $\cF$ by $\cF'$ are in correspondence with maps $\Psi,\Omega_1,\ldots,\Omega_m$ where
\[
\begin{gathered}
\Psi =  \left( \begin{matrix} 0 & v_0 & 1 & w_0 \\ 0 & 0 & 0 & 1 \end{matrix} \right) \\
\Omega_1 = \left( \begin{matrix}  0 & x & 1 & v_1 \\
0 & 0 & 0 & 1 \\
1 & y & A_1' & w_1 \\
0 & 1 & 0 & A_1
\end{matrix} \right)
\qquad
\Omega_k = \left( \begin{matrix} 0 & 0 & 1 & v_k \\
0 & 0 & 0 & 1 \\
1 & 0 & A_k' & w_k \\
0 & 1 & 0 & A_k
\end{matrix}
\right),~2\leq k \leq m
\end{gathered}
\]
for some $x,y,v_0,\ldots,v_m,w_0,\ldots,w_m \in \End V$.

To compute $\Ext^1(\cF,\cF')$, we need to quotient the space of extensions by equivalence. Suppose we have two extensions $\cG$ and $\cG'$ of $\cF$ by $\cF'$. As above, represent these by matrices $\Omega_k$ and $\Omega_k'$ respectively, determined by $x,y,v_0,\ldots,v_m,w_0,\ldots,w_m$ and 
$x',y',v_0',\ldots,v_m',w_0',\ldots,w_m'$ respectively. We wish to determine when $\cG$ and $\cG'$ are isomorphic; this happens precisely when there is some isomorphism $\cG \to \cG'$ such that the following diagram commutes:
\begin{equation}
\begin{gathered}
\xymatrix{
0 \ar[r] &\cF' \ar@{=}[d] \ar[r] & \cG \ar[r] \ar[d] & \cF \ar@{=}[d] \ar[r] & 0 \\
0 \ar[r] &\cF' \ar[r] & \cG' \ar[r] & \cF \ar[r] & 0. \\
}
\end{gathered}
\label{eq:ext1-comm}
\end{equation}
The map from $\cG$ to $\cG'$ can be expressed in components as follows:
\begin{equation}
\def\sheafbox{\save [].[ddrrr]*[F--:<3pt>]\frm{}\restore\phantom{V^2}}%
\begin{gathered}
\xymatrix{
\sheafbox&V^2 \ar[rrrr]^(.625){\Upsilon_0} &&&\sheafbox& V^2\\
& V^4 \ar[u]^{\Psi} \ar[rrrr]^(.625){\Theta}&&&& V^4 \ar[u]^{\Psi'} \\
V^2 \ar[ur]^{\Phi_1} \ar@/_3pc/[rrrr]_{\Upsilon_1} & V^2 \ar[u]_{\Phi_2} \ar@/_3pc/[rrrr]_{\Upsilon_2} & \cdots & V^2 \ar[ull]_{\Phi_{m+1}} \ar@/_3pc/[rrrr]_{\Upsilon_{m+1}} &
V^2 \ar[ur]^{\Phi_1'} & V^2 \ar[u]_{\Phi_2'} & \cdots & V^2 \ar[ull]_{\Phi_{m+1}'} 
}
\end{gathered}
\label{eq:ext1also}
\end{equation}
The commutativity of \eqref{eq:ext1-comm} implies that we have
\[
\Theta = \left( \begin{matrix}
1 & z_1 & 0 & z_2 \\
0 & 1 & 0 & 0 \\
0 & z_3 & 1 & z_4 \\
0 & 0 & 0 & 1 \end{matrix} \right)
\qquad
\Upsilon_k = \left( \begin{matrix} 1 & u_k \\ 0 & 1 \end{matrix} \right), ~ k \geq 0
\]
for some $z_1,z_2,z_3,z_4,u_k \in \End V$. The commutativity of \eqref{eq:ext1also} implies that $\Upsilon_0\Psi = \Psi'\Theta$ and
 the following diagram commutes:
\[
\xymatrix{
V^2 \oplus V^2 \ar[rr]^{\Omega_m} \ar[d]_{\Upsilon_m \oplus \Upsilon_{m+1}} &&
\cdots \ar[rr]^{\Omega_2} &&
V^2 \oplus V^2  \ar[rr]^{\Omega_1} \ar[d]_{\Upsilon_1 \oplus \Upsilon_2} &&
V^4 \ar[d]_\Theta \\
V^2 \oplus V^2 \ar[rr]^{\Omega_m'} &&
\cdots \ar[rr]^{\Omega_2'} &&
V^2 \oplus V^2  \ar[rr]^{\Omega_1'} &&
V^4.
}
\]
Writing out the commutativity of this diagram and using the above formulas for $\Theta$, $\Upsilon_k$, $\Omega_k$, and $\Omega_k'$, we get the following equations:
\begin{equation}
\begin{aligned}
x' &= x+z_2 & \qquad v_1' &= v_1-u_2+z_1+z_2 A_1 \\
y' &= y-u_1+z_4 & w_1' &= w_1+z_3-A_1 u_2+z_4 A_1 \\
v_0' &= v_0-z_3 & v_k' &= v_k + u_{k-1} - u_{k+1} \\
w_0' &= w_0+u_0-z_4 & w_k' &= w_k + u_k A_k - A_k' u_{k+1}
\end{aligned}
\label{eq:ext1-equiv}
\end{equation}
(here $k \geq 2$).

We conclude that $\cG = (x,y,v_k,w_k)$ and $\cG'=(x',y',v_k',w_k')$ are isomorphic if and only if there exist $z_1,z_2,z_3,z_4,u_k$ so that \eqref{eq:ext1-equiv} holds. Now an inspection of \eqref{eq:ext1-equiv} shows that every extension $\cG$ is isomorphic to one where $x=y=w_0=0$ and $v_k=0$ for all $k \geq 0$; just choose $z_j,u_k$ appropriately. 

Thus for the purposes of calculating $\Ext^1(\cF,\cF')$, we may restrict ourselves to extensions where $x=y=w_0=v_k=0$, which are determined by $(w_1,\ldots,w_m)\in(\End V)^m$. 
Now the extensions $(w_1,\ldots,w_m)$ and $(w_1',\ldots,w_m')$ are isomorphic if and only if there are $z_j,u_k$ such that \eqref{eq:ext1-equiv} hold
(where $x=y=w_0=v_k=x'=y'=w_0'=v_k'=0$). In this setting $v_k' = v_k + u_{k-1} - u_{k+1}$ gives $u_{k+1}=u_{k-1}$ for all $2\leq k\leq m$, so that $u_k$ is either $u_1$ or $u_2$ depending on the parity of $k$. The remaining equations in \eqref{eq:ext1-equiv} become
$z_4=u_0=u_1$, $z_1=u_2$, $z_2=z_3=0$, and $w_k' = w_k + u_k A_k - A_k' u_{k+1}$ for $1 \leq k \leq m$. We conclude the following result.

\begin{prop}
Let $\cF = (A_1,\ldots,A_m)$ and $\cF' = (A_1',\ldots,A_m')$ denote objects in $\Sh_n(\Lambda_m,\coeffs)$. Then we have
\begin{align*}
&\Ext^1(\cF,\cF') \cong 
(\End V)^m \, / \\
& \quad \{(u_1A_1-A_1'u_2,u_2A_2-A_2'u_1,u_1A_3-A_3'u_2,\ldots,u_1A_m-A_m'u_2)\,|\,u_1,u_2\in\End V\}
\end{align*}
if $m$ is odd, and
\begin{align*}
&\Ext^1(\cF,\cF') \cong 
(\End V)^m \, / \\
&\quad \{(u_1A_1-A_1'u_2,u_2A_2-A_2'u_1,u_1A_3-A_3'u_2,\ldots,u_2A_m-A_m'u_1)\,|\,u_1,u_2\in\End V\}
\end{align*}
if $m$ is even.

Explicitly, given $(w_1,\ldots,w_m) \in (\End V)^m$, we can construct an element of $\Ext^1$ as depicted in \eqref{eq:ext1}, by the conditions that $\Psi = \left( \begin{smallmatrix} 0 & 0 & 1 & 0\\ 0 & 0 & 0 & 1\end{smallmatrix} \right)$ and $\Phi_1,\ldots,\Phi_{m+1}$ are given as follows. Define $\Omega_1,\ldots,\Omega_m$ by $\Omega_j = \left( \begin{smallmatrix} 0 & 0 & 1 & 0 \\ 0 & 0 & 0 & 1 \\ 1 & 0 & A_j' & w_j \\ 0 & 1 & 0 & A_j \end{smallmatrix} \right)$ for $1\leq j\leq m$; these then determine $\Phi_1,\ldots,\Phi_{m+1}$ by the condition that the final $k$ arrows in \eqref{eq:Omega} compose to give $\Phi_k \oplus \Phi_{k+1}$.
\label{prp:ext1}
\end{prop}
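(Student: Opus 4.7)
The plan is to identify $\Ext^1(\cF,\cF')$ with the space of isomorphism classes of extensions $0\to\cF'\to\cG\to\cF\to 0$, and then to parameterize such extensions concretely using the same ``chain'' presentation as in Proposition~\ref{PropObjSH}. The first observation is that the triangle inequality for singular support (\cite[Proposition~5.1.3]{KashiSchapi}) forces $SS_\bullet(\cG)\subseteq SS_\bullet(\cF)\cup SS_\bullet(\cF')$, so $\cG$ has microsupport on $\Lambda_m$, and the stalk rank doubles to $2n$. Hence $\cG$ admits exactly the same ``legible'' presentation as in Figure~\ref{fig:sheaf-object} but with $V$ replaced by $V^2$, and with structure maps $\Psi\colon V^4\to V^2$ and $\Phi_1,\dots,\Phi_{m+1}\colon V^2\to V^4$ that factor through a sequence of maps $\Omega_1,\dots,\Omega_m$ as in \eqref{eq:Omega}.

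Next I would exploit the short exact sequence. Using the base-point freedom at the stalks of $\cG$, one can arrange that the maps $\cF'\hookrightarrow\cG$ and $\cG\twoheadrightarrow\cF$ become the standard inclusions and projections in every stratum. Pinning down these bases forces $\Psi$, $\Omega_1$, and the $\Omega_k$ for $k\geq 2$ to have the block forms written out in the excerpt, with free parameters $x,y,v_0,\dots,v_m,w_0,\dots,w_m\in\End V$; this is just the commutative-diagram analysis already performed for $\Ext^0$, applied one level up. The upshot is a surjection from an affine space of tuples $(x,y,v_\bullet,w_\bullet)$ onto the set of extensions, with the inner $A_k,A_k'$ appearing only in the $\Omega_k$ blocks.

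Then I would analyze equivalence of extensions. An isomorphism $\cG\to\cG'$ fitting into diagram~\eqref{eq:ext1-comm} must be upper-triangular at each stratum, and so is determined by parameters $z_1,z_2,z_3,z_4$ (inside the $V^4$ stratum) and $u_0,\dots,u_{m+1}$ (on the lower $V^2$ strata). Matching entries in $\Upsilon_0\Psi=\Psi'\Theta$ and in the lower commuting squares produces exactly the system \eqref{eq:ext1-equiv}. Inspecting these equations, I can use $z_2$ to kill $x$, $z_4$ to kill $y$, $u_0$ to kill $w_0$, and the $v_k$-equations successively to set $v_0,v_1,\dots,v_m$ to zero; this normalizes every extension to one determined by $(w_1,\dots,w_m)\in(\End V)^m$.

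Finally I would read off the residual equivalence. After the normalization, the relations $v_k'=v_k+u_{k-1}-u_{k+1}=0$ force $u_{k+1}=u_{k-1}$ for $k\geq 2$, so the $u_k$ take only two values $u_1,u_2$ according to parity. The surviving equations $w_k'=w_k+u_kA_k-A_k'u_{k+1}$ for $k=1,\dots,m$ then give the quotient stated in the proposition, with the subscript pattern on $u_k$ and $u_{k+1}$ switching between the odd and even $m$ cases exactly as claimed. The main obstacle I foresee is purely bookkeeping: carefully enforcing the constraint that $\phi_k,\phi_{k+1}$ have complementary image at every crossing (so that the $\Omega_k$ retain the prescribed form after conjugation by $\Theta$ and the $\Upsilon_k$), and verifying that the normalization used to kill $x,y,w_0,v_\bullet$ is compatible with the remaining $w_k$-equations. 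Once this compatibility is checked, the description of $\Ext^1$ as in the statement, together with the explicit construction via $\Omega_j=\bigl(\begin{smallmatrix}0&0&1&0\\0&0&0&1\\1&0&A_j'&w_j\\0&1&0&A_j\end{smallmatrix}\bigr)$, follows immediately.
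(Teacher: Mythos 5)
Your proposal is correct and follows essentially the same route as the paper's proof: identify $\Ext^1$ with Yoneda extensions, use the singular-support triangle inequality to show $\cG$ again has the legible chain form with microlocal rank $2n$, normalize the inclusion and projection maps at each stratum, extract the block forms of $\Psi$ and $\Omega_k$, analyze isomorphisms via the upper-triangular $\Theta$ and $\Upsilon_k$, and then kill $x,y,w_0,v_\bullet$ to reduce to the residual relations on $(w_1,\dots,w_m)$.
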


\subsection{Compositions in \texorpdfstring{$\Sh_n(\Lambda_m,\coeffs)$}{Sh n(Lambda m,k)} }
\label{sec:compositions-sh-}
We describe now the compositions in the category $\Sh_n(\Lambda_m,\coeffs)$. In Section \ref{sec:vanishing-ext_2}, we will show that all $\Ext^i$ groups vanish for $i>1$, so we only have to give compositions of the form $u'\circ u$, $u'\circ e$, and $e'\circ u$ for $u \in \Ext^0(\cF,\cF')$, $u' \in \Ext^0(\cF',\cF'')$, $e \in \Ext^1(\cF,\cF')$, and $e' \in \Ext^1(\cF',\cF'')$.

In the first case this is obvious: the composition in $\Ext^0$ of morphisms
\[ \cF \xrightarrow{u} \cF' \xrightarrow{u'} \cF'' \]
given by $u=(u_1,u_2)$ and $u'=(u'_1,u'_2)$ is $u'\circ u = (u'_1u_1,u'_2u_2)$.

The only non-trivial compositions left involve degree $0$ with degree $1$. Let $\mathcal{F},\mathcal{F}'$ and $\mathcal{F}''$ be three objects of $Sh_n(\Lambda_m,\coeffs)$ and let $u:\mathcal{F}\rightarrow \mathcal{F}'$ be a morphism and $e':\mathcal{F}''\xrightarrow{i}\mathcal{G}\xrightarrow{p}\mathcal{F}'$ an extension.  Then, following \cite[\S III.1]{HiltonStamm}, the extension $e'\circ u:\mathcal{F}''\rightarrow u^*\mathcal{G}\rightarrow\mathcal{F}$ is given by the top row of the diagram
$$
\xymatrix{
\cF'' \ar@{=}[d] \ar@{-->}[r] \ar@/^1pc/[rr]^{0}& u^*\mathcal{G}\ar[r] \ar[d] & \mathcal{F}\ar[d]^-u\\
\cF'' \ar[r]^{i} & \mathcal{G}\ar[r]^{p} &\mathcal{F}'
,}
$$
in which the right square is a pullback square whose universal property gives us the dotted arrow.  (Concretely, the map $u^*\cG \to \cG$ restricts to an isomorphism
\[ \ker(u^*\cG \to \cF) \xrightarrow{\sim} \ker(\cG \xrightarrow{p} \cF') = \img(\cF'' \xrightarrow{i} \cG), \]
which we use to lift the map $i:\cF'' \to \cG$ to $u^*\cG$.)

\begin{figure}
\def\sheafbox{\save [].[ddrrr]*[F--:<3pt>]\frm{}\restore\phantom{V^2}}%
\[
\xymatrix{
\sheafbox&V^2 \ar@[red][rrrrr]^(.6){\color{red} \begin{smallmatrix}
    (0 &1)
  \end{smallmatrix}} &&&&\sheafbox& V\\
& V^4 \ar@[blue]@/_2pc/[dddd]_{\color{blue} W}\ar[u]^{\Psi} \ar@[red][rrrrr]^(.6){\color{red} \left(\begin{smallmatrix}
    0 &1&0&0\\
0&0&0&1
  \end{smallmatrix}\right)}&&&&& V^2 \ar@[blue]@/^2pc/[dddd]^(.55){\color{blue} 
  \left(\begin{smallmatrix}
 u_2 & 0 \\
0 & u_1
 \end{smallmatrix}\right)
}\ar[u]^{\psi} \\
V^2 \ar@[blue]@/_2pc/[dddd]_{\color{blue} U_1}\ar[ur]^{\Phi_1} \ar@[red]@/_3pc/[rrrrr]_{\color{red} \begin{smallmatrix}
    (0 &1)
  \end{smallmatrix}} & V^2 \ar[u]_{\Phi_2} \ar@[red]@/_3pc/[rrrrr]_{\color{red} \begin{smallmatrix}
    (0 &1)
  \end{smallmatrix}} & \cdots & V^2 \ar[ull]_{\Phi_{m+1}} \ar@[red]@/_3pc/[rrrrr]_{\color{red} \begin{smallmatrix}
    (0 &1)
  \end{smallmatrix}} 
\ar@[blue][dddd]_(.6){\color{blue} U_{m+1}}
&&
V \ar@[blue][dddd]^(.6){\color{blue} u_1}\ar[ur]^{\phi_1} & V \ar[u]^{\phi_2} & \cdots & V \ar@[blue]@/^2pc/[dddd]^(.55){\color{blue} u_{m+1}}\ar[ull]_{\phi_{m+1}}\\ 
&&&&&&\\
\sheafbox&V^2 \ar@[red][rrrrr]^(.6){\color{red} \begin{smallmatrix}
    (0 &1)
  \end{smallmatrix}} &&&&\sheafbox& V\\
& V^4 \ar[u]_{\Psi'} \ar@[red][rrrrr]^(.6){\color{red} \left(\begin{smallmatrix}
    0 &1&0&0\\
0&0&0&1
  \end{smallmatrix}\right)}&&&&& V^2 \ar[u]^{\psi'} \\
V^2 \ar[ur]^{\Phi_1'} \ar@[red]@/_3pc/[rrrrr]_{\color{red} \begin{smallmatrix}
    (0 &1)
  \end{smallmatrix}} & V^2 \ar[u]_{\Phi_2'} \ar@[red]@/_3pc/[rrrrr]_{\color{red} \begin{smallmatrix}
    (0 &1)
  \end{smallmatrix}} & \cdots & V^2 \ar[ull]_{\Phi_{m+1}'} \ar@[red]@/_3pc/[rrrrr]_{\color{red} \begin{smallmatrix}
    (0 &1)
  \end{smallmatrix}} &&
V \ar[ur]^{\phi_1'} & V \ar[u]_{\phi_2'} & \cdots & V \ar[ull]_{\phi_{m+1}'}}
\]
\caption{The pullback diagram for the morphisms $p: \cG \to \cF'$ and $u:\cF \to \cF'$.}
\label{fig:complicated-diagram}
\end{figure}

Now $u \in \Ext^0(\cF,\cF')$ is determined by $(u_1,u_2) \in (\End V)^2$ as in Proposition~\ref{prp:ext0}, and $e' \in \Ext^1(\cF',\cF'')$ is determined by $(w_1',\ldots,w_m') \in (\End V)^m$ as in Proposition~\ref{prp:ext1}.
To calculate $e' \circ u$, it suffices to calculate the corresponding data $(w_1,\ldots,w_m)$ for $e'\circ u$ in terms of $(u_1,u_2)$ and $(w_1',\ldots,w_m')$.

The pullback $u^*\mathcal{G}$ is now given by the diagram in Figure~\ref{fig:complicated-diagram}, where all squares commute.
From the definition of pullback, we get that the map $U_i$ is given by the matrix
$\left(\begin{smallmatrix}
1 & 0 \\
0 & u_i  
\end{smallmatrix}\right)$ and 
$W$ by 
$\left(\begin{smallmatrix}
  1 & 0 & 0 & 0\\
0 & u_2 & 0 & 0 \\
0 & 0 & 1 & 0\\
0 & 0& 0 & u_1 
\end{smallmatrix}\right)$.
As in Section~\ref{ssec:ext1}, from the relation $ W\circ \Phi_k=\Phi'_k\circ U_k$ we obtain that $W\circ (\Phi_k\oplus \Phi_{k+1})=(\Phi'_k\oplus \Phi'_{k+1})\circ (U_k\oplus U_{k+1})$, which implies the commutativity of each square in the following diagram: 

\begin{equation}
\begin{gathered}
\xymatrix{
V^2 \oplus V^2 \ar[rr]^{\Omega_m} \ar[d]^{U_{m}\oplus U_{m+1}} &&
\cdots \ar[rr]^{\Omega_2} &&
V^2 \oplus V^2  \ar[rr]^{\Omega_1}  \ar[d]^{U_{1}\oplus U_{2}}&&
V^4  \ar[d]^{W} \\
V^2 \oplus V^2 \ar[rr]^{\Omega'_m}  &&
\cdots \ar[rr]^{\Omega'_2} &&
V^2 \oplus V^2  \ar[rr]^{\Omega'_1} &&
V^4 . 
}
\end{gathered}
\label{eq:pullback}
\end{equation}
Here, as in Proposition~\ref{prp:ext1}, we have
$\Omega_k= \left( \begin{smallmatrix} 0 & 0 & 1 & 0 \\
0 & 0 & 0 & 1 \\
1 & 0 & A''_k & w_k \\
0 & 1 & 0 & A_k
\end{smallmatrix} \right)$
and 
$\Omega'_k= \left( \begin{smallmatrix} 0 & 0 & 1 & 0 \\
0 & 0 & 0 & 1 \\
1 & 0 & A''_k & w'_k \\
0 & 1 & 0 & A'_k
\end{smallmatrix} \right)$.

Plugging the formulas for $\Omega_k,\Omega_k',U_k,W$ into the commutative squares in \eqref{eq:pullback} yields $w_k'u_{k+1} = w_k$ for all $k$. It follows that the composition of the extension $e' = (w_1',\ldots,w_m')$ with $u = (u_1,u_2)$ is $e'\circ u = (w_1'u_2,w_2'u_1,w_3'u_2,\ldots)$.

Similar computations for the pushforward of extensions imply that if $u'=(u'_1,u'_2):\cF'\rightarrow \cF''$ is an element of $\Ext^0(\cF',\cF'')$ and $e=(w_1,\cdots,w_m)\in \Ext^1(\cF,\cF')$, then $u'\circ e=(u'_1w_1,u'_2w_2,u'_1w_3,\ldots)$.

\section{Vanishing of \texorpdfstring{$\Ext^2$}{Ext2}}
\label{sec:vanishing-ext_2}

Our goal in this section is to prove the following.

\begin{prop} \label{prop:ext2-vanishes}
Let $\Lambda$ be a rainbow braid closure and $\coeffs$ a field, and fix integers $r,s \geq 1$.  Given $\cF \in \Sh_r(\Lambda,\coeffs)$ and $\cG \in \Sh_s(\Lambda,\coeffs)$, we have $\Ext^i(\cF,\cG)=0$ for all $i \neq 0,1$.
\end{prop}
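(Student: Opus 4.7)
My plan is to prove Proposition~\ref{prop:ext2-vanishes} by computing $R\Hom(\cF,\cG)$ as an explicit cochain complex coming from the regular cell complex structure of the rainbow front, and then showing this complex has no cohomology in degrees $\geq 2$. By the a priori bound mentioned earlier in the paper (sheaves correspond to $\coeffs[x,y]$-modules, whose bimodule resolution has length $3$), we need only handle $i = 2$.

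The first step is to pass to the combinatorial model from Section~\ref{sec:comb-descr-constr}: view $\cF$ and $\cG$ as functors on the poset $\mathcal{S}_\Lambda$ of strata of the rainbow braid front, satisfying the microsupport conditions at arcs, cusps, and crossings (Figures~\ref{fig:muarc}--\ref{fig:mucross}) together with the microlocal rank condition. The natural cochain model for $R\Hom$ on such a poset has a filtration by chain length, giving a complex $C^0 \xrightarrow{d_0} C^1 \xrightarrow{d_1} C^2$, where $C^k$ collects contributions indexed by chains $\sigma_0 < \sigma_1 < \dots < \sigma_k$ in $\mathcal{S}_\Lambda$. Since the only chains in a rainbow-braid stratification have length at most $2$ (point $<$ arc $<$ region), the complex truncates here, and $\Ext^i(\cF,\cG)$ is its $i$-th cohomology.

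The second step is a local analysis showing $d_1 : C^1 \to C^2$ is surjective. The contributions to $C^2$ are supported near the $0$-dimensional strata (cusps and crossings). Near a crossing, the microsupport acyclicity condition says that $f_1 \oplus f_2$ in Figure~\ref{fig:mucross} is an isomorphism for both $\cF$ and $\cG$; this means that a $C^2$-cochain localized there has a canonical preimage obtained by inverting $f_1 \oplus f_2$, visibly a $d_1$-coboundary of an element of $C^1$ supported on the incident arcs. Near a cusp, the isomorphisms in Figure~\ref{fig:mucusp} play the same role: they are the ingredients already exploited in Section~\ref{ssec:ext0} and Section~\ref{ssec:ext1} to normalize objects and extensions, and an analogous normalization trivializes any cusp-localized $C^2$-class.

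The third step is a global assembly: the rainbow front has an unbounded top region on which every object of $\Sh_n(\Lambda,\coeffs)$ vanishes, and this vanishing pins down all integration constants that might otherwise obstruct lifting local preimages to a global preimage. The main obstacle, as one would expect, is precisely this global assembly; the local formal picture is forced by microsupport, but verifying that the local preimages can be chosen coherently (without introducing monodromy around the braid crossings) requires careful use of the fact that a rainbow closure has no nontrivial loops in its dual incidence structure beyond those already killed by the cusp normalizations. Once this is done, $d_1$ is surjective, so $\Ext^2(\cF,\cG) = 0$, completing the proof.
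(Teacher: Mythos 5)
Your proposal identifies the right target (surjectivity of a degree-2 differential in a finite cochain model) but leaves the load-bearing steps undone, and it also glosses over a subtlety that the paper's proof has to confront head-on.

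The most serious gap is that you explicitly concede it: you write that ``the main obstacle $\ldots$ is precisely this global assembly,'' note that it ``requires careful use of the fact that a rainbow closure has no nontrivial loops,'' and then simply assert ``once this is done, $d_1$ is surjective.'' That assembly \emph{is} the proof. In the paper it occupies all of Section~\ref{ssec:cech-h2}: a hexagonal refinement of an arbitrary open cover so that tiles meet the front in controlled ways, followed by a combinatorial game (Lemma~\ref{lem:remove-pegs}, Lemma~\ref{lem:cusp-surjective}, Proposition~\ref{prop:remove-y}) of removing ``$\rotatey$'' subgraphs one horizontal edge at a time, using the orientation of the plane to order them from left to right so that the leaves of each successive $\rotatey$ are guaranteed to still be leaves at the time of removal. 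The two local inputs you cite --- that $f_1\oplus f_2$ is an isomorphism at a crossing, and the cusp isomorphisms --- are indeed used, but only after being packaged into the precise surjectivity statements at a single vertex (see the image computations $\img(h_1)$, $\img(h_2)$ in Figure~\ref{fig:cech-d1-local} and Lemma~\ref{lem:cusp-surjective}); ``inverting $f_1\oplus f_2$'' does not by itself produce a coherent global preimage, which is exactly the monodromy issue you flag and then set aside.

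There is a second, more conceptual gap. You treat the chain-indexed cochain model on $\mathcal{S}_\Lambda$ as if it automatically computes $\Ext^\bullet(\cF,\cG)$ in $\Sh_n(\Lambda,\coeffs)$ and automatically truncates at $C^2$. Neither is free: one needs to know that the local sheaf $\sHom$ agrees with $R\sHom$, and the paper devotes all of Section~\ref{ssec:rhom-equals-hom} (Proposition~\ref{prop:rhom-equals-hom}) to this. The argument there is not a one-liner --- it requires the open/closed inclusion adjunctions (Lemmas~\ref{lem:rhom-constant}--\ref{lem:rhom-lower-shriek}, Propositions~\ref{prop:rhom-strand-0}, \ref{prop:rhom-strand-1}) to localize the problem to crossings and cusps, and then a clever comparison with a model unknot together with the already-established \v{C}ech vanishing of Proposition~\ref{prop:h2shom-vanishes} to kill the stalks of $R^q\sHom$ for $q\geq 1$. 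In other words, the paper's two ingredients (\v{C}ech vanishing of $H^{\geq 2}(\sHom)$, and $R\sHom = \sHom$) are genuinely intertwined via the local-to-global Ext spectral sequence, and your proposed single-complex shortcut would have to smuggle in the content of both. Your outline is a reasonable heuristic for why the result should be true, but as a proof it is missing the core combinatorial argument and a justification of the cochain model.
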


\begin{proof}
Since $\Lambda$ carries a binary Maslov potential, such $\cF$ and $\cG$ are equivalent to their zeroth cohomology sheaves (see the discussion in Section \ref{ssec:sheaf-obj}), so without loss of generality we can assume that $\cF$ and $\cG$ are honest sheaves of $\kk$-modules.

We will first prove Proposition~\ref{prop:h2shom-vanishes}, asserting that $H^i(\R^2; \sHom(\cF,\cG)) = 0$ for all $i \geq 2$, in Section~\ref{ssec:cech-h2}.  In Section~\ref{ssec:rhom-equals-hom}, we will then prove Proposition~\ref{prop:rhom-equals-hom}, asserting that $R\sHom(\cF,\cG) \simeq \sHom(\cF,\cG)$.  Combining these facts, we see that $\Ext^\bullet(\cF,\cG)$ is quasi-isomorphic to
\[ R\Gamma(\R^2, R\sHom(\cF,\cG)) \simeq R\Gamma(\R^2, \sHom(\cF,\cG)), \]
which is the sheaf cohomology of $\sHom(\cF,\cG)$ and thus vanishes in degrees $i<0$ and $i\geq 2$, as desired.
\end{proof}

\subsection{A computation of \texorpdfstring{\v{C}ech}{Cech} cohomology} \label{ssec:cech-h2}
\def\hexsize{1.5}
\def\dotsize{\hexsize*2pt}
\def\reddot(#1){ \fill [color=red] (#1) circle[radius=\dotsize]; }
\def\bluedot(#1){ \fill [color=blue] (#1) +(-\dotsize,-\dotsize) rectangle +(\dotsize,\dotsize); }
\tikzset{hexagon/.style={shape=regular polygon, regular polygon sides=6,
		minimum size=\hexsize cm, draw, inner sep=0}}
\newcommand{\placehex}[2]{
  \ifodd#1
    \node[hexagon] (h#1;#2) at ({(#1*\hexsize/2+#1*\hexsize/4)},{(#2)*\hexsize*sin(60)}) {};
  \else
    \node[hexagon] (h#1;#2) at ({(#1*\hexsize/2+#1*\hexsize/4)},{(#2+1/2)*\hexsize*sin(60)}) {};
  \fi
}
\def\drawcrossing(#1)(#2)(#3)(#4){ \draw (#1)--(#2) (#3) -- ($(#3)!.4!(#4)$) (#4) -- ($(#4)!.4!(#3)$); }

In this subsection we will prove that
\[ H^i(\R^2; \sHom(\cF,\cG)) = 0 
\text{ for all } i \geq 2, \]
where $\cF \in \Sh_r(\Lambda,\coeffs)$ and $\cG \in \Sh_s(\Lambda,\coeffs)$ are sheaves of $\coeffs$-modules with microsupport on a rainbow braid closure $\Lambda$.  Letting $\cH = \sHom(\cF,\cG)$, we will compute the \v{C}ech cohomology of $\cH$, using the fact that 
\[ \lim_{\substack{\longrightarrow \\ \cU}} \check{H}^*(\cU; \cH) \cong H^*(\R^2;\cH) \]
as $\cU$ ranges over open coverings of $\R^2$ \cite[Th\'{e}or\`{e}me~II.5.10.1]{Godement}.  It will therefore suffice to show that every $\cU$ admits a refinement $\cV$ such that $\check{H}^i(\cV;\cH) = 0$ for all $i \geq 2$.

We explain our construction of $\cV$ from a given open cover $\cU$ of $\R^2$.  We tile the plane by hexagons of side length $\epsilon > 0$, arranging by a small isotopy that the front projection of $\Lambda$ only intersects these hexagons transversely along the midpoints of edges.  We let $D \subset \R^2$ be a union of finitely many tiles $D_1,\dots,D_n$ which is topologically a disk and which contains an open neighborhood of the front.  By taking the side length $\epsilon$ sufficiently small and perturbing $\Lambda$ slightly, we can also arrange that
\begin{itemize}
\item every tile $D_i \subset D$ is contained in some open set $U_i \in \cU$;
\item every tile contains at most one crossing or cusp;
\item the two horizontal edges of a hexagon do not intersect the front;
\item if a tile does not contain a crossing, then its boundary intersects the front at most twice, and it does so along consecutive edges if and only if it contains a cusp.
\end{itemize}
In other words, the tiles can only intersect $\Lambda$ in the configurations shown in Figure~\ref{fig:possible-tiles}, and they fit together to produce a front as in the left side of Figure~\ref{fig:tiling}.  For some positive $\delta \ll \epsilon$, each tile $D_i$ has a $\delta$-neighborhood $V_i \subset U_i$, so we let
\[ \cV = \{V_1,\dots,V_n\} \cup \left\{ U \cap (\R^2 \ssm D) \mid U \in U_i \right\}, \]
which is an open cover of the plane refining $\cU$.

\begin{figure}
\begin{tikzpicture}
\def\hexsize{1.25}
\foreach \i in {0,2,4,6,8,10,12} {\placehex{\i}{0};}
\begin{scope}[every path/.style = very thick]
  \draw (h0;0.side 3) -- (h0;0.side 6);
  \draw (h2;0.side 2) -- (h2;0.side 5);
  \begin{scope}[every path/.append style={rounded corners=10pt}]
    \draw (h4;0.side 3) -- (h4;0.center) -- (h4;0.side 5);
    \draw (h6;0.side 2) -- (h6;0.center) -- (h6;0.side 6);
  \end{scope}
  \draw (h8;0.side 5) -- (h8;0.center) -- (h8;0.side 6);
  \draw (h10;0.side 2) -- (h10;0.center) -- (h10;0.side 3);
  \drawcrossing(h12;0.side 2)(h12;0.side 5)(h12;0.side 3)(h12;0.side 6);
\end{scope}
\end{tikzpicture}
\caption{The possible ways in which the front for $\Lambda$ can intersect a hexagonal tile.}
\label{fig:possible-tiles}
\end{figure}
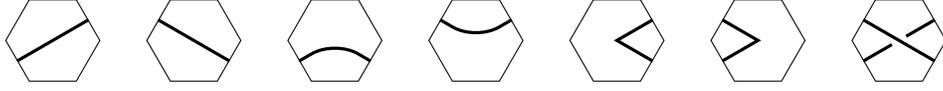

\begin{figure}
\begin{tikzpicture}
\def\smallhexexample{0/0, 1/0, 1/1, 2/-1, 2/0, 2/1, 3/0, 3/1, 4/0}
\begin{scope}[hexagon/.append style=dashed]
  \foreach \i/\j in \smallhexexample { \placehex{\i}{\j}; }
  \begin{scope}[every path/.style=very thick,rounded corners=10pt]
    \draw (h0;0.center) -- (h2;1.center) -- (h4;0.center);
    \draw (h0;0.center) -- (h2;-1.center) -- (h4;0.center);
  \end{scope}
\end{scope}
\begin{scope}[xshift=200]
  \foreach \i/\j in \smallhexexample { \placehex{\i}{\j}; }
  \begin{scope}[every path/.style=thin,dashed,rounded corners=10pt]
    \draw (h0;0.center) -- (h2;1.center) -- (h4;0.center);
    \draw (h0;0.center) -- (h2;-1.center) -- (h4;0.center);
  \end{scope}
  \begin{scope}
    \clip ($(h0;0.center)+(-0.25,0)$) -- ($(h2;1.center)+(0,0.25)$) -- ($(h4;0.center)+(0.25,0)$) -- ($(h2;-1.center)+(0,-0.25)$) -- cycle;
    \foreach \i/\j in \smallhexexample {
      \foreach \k in {1,...,6} { \reddot(h\i;\j.corner \k); }
      \foreach \k in {1,...,6} { \bluedot(h\i;\j.side \k); }
    }
  \end{scope}
\end{scope}
\end{tikzpicture}
\caption{Left, the hexagonal tiling used to produce the refinement $\cV$ of the open cover $\cU$.  Right, the blue squares and red dots contribute generators to $\check{C}^1(\cV;\cH)$ and $\check{C}^2(\cV;\cH)$ respectively.}
\label{fig:tiling}
\end{figure}
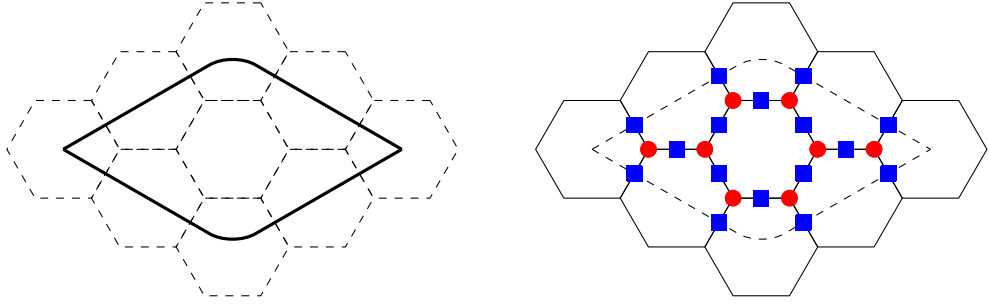

The \v{C}ech complex $\check{C}^*(\cV;\cH)$ now admits a simple description.  The sheaves $\cF$ and $\cG$ are zero outside $D$, and hence so is $\cH = \sHom(\cF,\cG)$.  Since arbitrary intersections of open sets which lie in $\R^2 \ssm D$ do not contribute to $\check{C}^*(\cV;\cH)$, we can discard them.  The remaining open sets are $\delta$-neighborhoods of the hexagonal tiles $D_1,\dots,D_n$, which intersect in pairs along a neighborhood of each edge $e_{ij} = D_i \cap D_j$ of the tiling and in triples along a neighborhood of each vertex $v_{ijk} = D_i \cap D_j \cap D_k$, as shown in the right side of Figure~\ref{fig:tiling}.  There are no nonempty $(k+1)$-fold intersections in $D$ for $k \geq 3$; hence $\check{C}^k(\cV;\cH)=0$ for all such $k$ and we have
\[ \check{H}^2(\cV;\cH) = \coker\big(d^1: \check{C}^1(\cV;\cH) \to \check{C}^2(\cV;\cH)\big). \]
Thus it suffices to show that $d^1$ is surjective.

The map $d^1$ can be described combinatorially as follows.  Each triple intersection
\[ V_{ijk} := V_i \cap V_j \cap V_k \ni v_{ijk} \]
lies in a two-dimensional stratum of the plane, so $\cF$ and $\cG$ and thus $\cH = \sHom(\cF,\cG)$ are locally constant on $V_{ijk}$, from which
\[ \cH(V_{ijk}) = \Hom(\cF(V_{ijk}), \cG(V_{ijk})). \]
A nonempty double intersection
\[ V_{ij} := V_i \cap V_j \]
along an edge $e_{ij}$ of the tiling contains two triple intersections $V_{ijk_1}$ and $V_{ijk_2}$ corresponding to either endpoint of $e_{ij}$.  (This is a slight abuse of notation if one of the endpoints lies on $\partial D$, but in this case $\cF$, $\cG$, and $\cH$ are zero on any triple intersection near this endpoint and we can ignore it.)  The restriction maps $\cH(V_{ij}) \to \cH(V_{ijk})$ fall into one of three cases:
\begin{figure}
\begin{tikzpicture}
\tikzset{->-/.style={decoration={markings,mark=at position #1 with {\arrow[thick]{>}}},postaction={decorate}}}
\begin{scope}
  \draw (0,-1) -- (0,1);
  \draw[->-=0.6] (0,0) -- (0,1);
  \draw[->-=0.6] (0,0) -- (0,-1);
  \foreach \j in {-1,1} {
    \foreach \i in {30,150} { \draw (0,\j) -- +(\i*\j:1); }
    \reddot (0,\j);
  }
  \draw[thin,dashed] (-2,0) -- (2,0);
  \node at (-2,0) [left=1] {$\Lambda$};
  \node at (-1,0.5) {$D_i$};
  \node at (1,0.5) {$D_j$};
  \node at (0,1) [above=3] {$D_{k_2}$};
  \node at (0,-1) [below=3] {$D_{k_1}$};
  \bluedot (0,0);
\end{scope}
\begin{scope}[xshift=100,local bounding box=scopeF]
  \node (A) at (0,1) {$B$};
  \node (B) at (0,0) {$A$};
  \node (C) at (0,-1) {$A$};
  \draw [thick, ->] (B) -- (A) node[font=\small,midway,sloped,left,rotate=270] {$f$};
  \draw [thick, ->] (B) -- (C) node[font=\small,midway,sloped,left,rotate=90] {$\cong$};
\end{scope}
\begin{scope}[xshift=150,local bounding box=scopeG]
  \node (A) at (0,1) {$Y$};
  \node (B) at (0,0) {$X$};
  \node (C) at (0,-1) {$X$};
  \draw [thick, ->] (B) -- (A) node[font=\small,midway,sloped,left,rotate=270] {$g$};
  \draw [thick, ->] (B) -- (C) node[font=\small,midway,sloped,left,rotate=90] {$\cong$};
\end{scope}
\begin{scope}[xshift=215,local bounding box=scopeH]
  \node[text depth=0pt] (A) at (0,1) {$\Hom(B,Y)$};
  \node (B) at (0,0) {$H$};
  \node[text depth=0pt] (C) at (0,-1) {$\Hom(A,X)$};
  \draw [thick, ->] (B) -- (A) node[font=\small,midway,sloped,left,rotate=270] {$h_2$};
  \draw [thick, ->] (B) -- (C) node[font=\small,midway,sloped,left,rotate=90] {$h_1$};
\end{scope}
\foreach \s in {F,G,H} {
  \draw[dashed] (scope\s.north east) rectangle (scope\s.south west);
  \node at (scope\s.south) [below=2] {$\mathcal{\s}$};
}
\end{tikzpicture}
\caption{A portion of the \v{C}ech complex corresponding to the restriction maps $\cH(V_{ij}) \to \cH(V_{ijk_l})$, near an edge $e = D_i \cap D_j$ of the tiling which intersects $\Lambda$.  The diagrams at right represent $\cF$, $\cG$, and $\cH$ near $e$.}
\label{fig:cech-d1-local}
\end{figure}
\begin{itemize}
\item If $\Lambda$ does not pass through $e_{ij}$, then $\cF$, $\cG$, $\cH$ are all locally constant near $e_{ij}$ and the restriction maps 
\[ \cH(V_{ijk_1}) \leftarrow \cH(V_{ij}) \rightarrow \cH(V_{ijk_2}) \]
are both isomorphisms.
\item If a strand of $\Lambda$ with Maslov potential $0$ passes through $e_{ij}$, then the restriction maps $f,g$ depicted in Figure~\ref{fig:cech-d1-local} are injective.  In this case the local sections $H = \cH(V_{ij})$ are in bijection with pairs $(\varphi: A \to X, \psi: B\to Y)$ such that the diagram
\[ \xymatrix{ B\ar[r]^\psi & Y \\ A\ar[r]^\varphi\ar[u]^f & X\ar[u]_g } \]
commutes.  The restriction maps $h_1$ and $h_2$ send $(\varphi,\psi)$ to $\varphi$ and $\psi$ respectively.  Then $h_1$ is surjective, since every $\varphi$ can be lifted to some $\psi$; and $h_2$ is injective with image
\[ \img(h_2) = \{\psi: B \to Y \mid \img(\psi \circ f) \subset \img(g) \}, \]
since every $\psi$ coming from such a diagram determines the corresponding $\varphi$ uniquely.
\item If a strand of $\Lambda$ with Maslov potential $1$ passes through $e_{ij}$, then $f,g$ are surjective in Figure~\ref{fig:cech-d1-local}, so a similar argument says that $h_1$ is injective with image
\[ \img(h_1) = \{ \varphi: A \to X \mid \ker(f) \subset \ker(g\circ\varphi) \} \]
and $h_2$ is surjective.
\end{itemize}

In Figure~\ref{fig:cech-big-unknot}, we illustrate $d^1: \check{C}^1 \to \check{C}^2$ for a particular Legendrian unknot $\Lambda$ (drawn as a dotted curve) and tiling of the plane.  Here each blue square along an edge $e_{ij}$ contributes $\cH(V_{ij})$ to $\check{C}^1(\cV;\cH)$, each red dot at a vertex $v_{ijk}$ contributes $\cH(V_{ijk})$ to $\check{C}^2(\cV;\cH)$, and each line segment connecting a blue square to a red dot gives a component $\cH(V_{ij}) \to \cH(V_{ijk})$ of the differential $d^1$. 

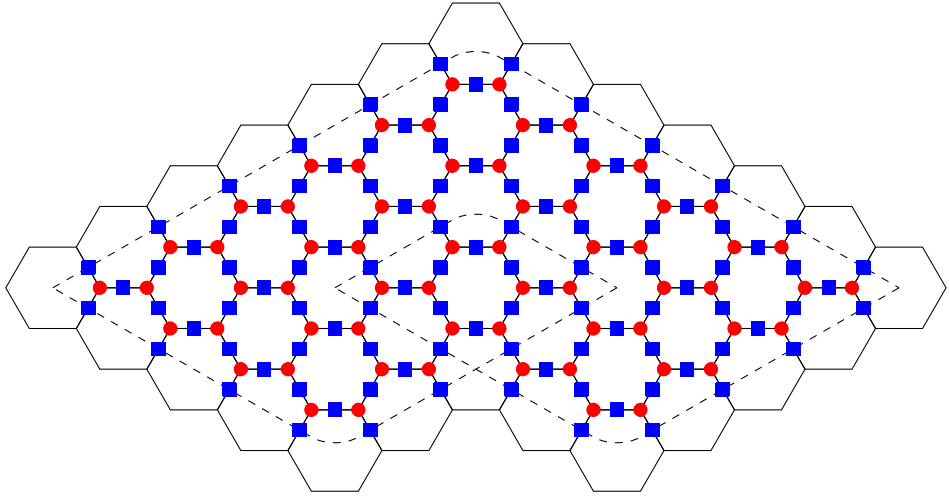
\begin{figure}
\begin{tikzpicture}
\def\hexsize{1.25}
\def\largehexexample{0/0, 1/0, 1/1, 2/-1, 2/0, 2/1, 3/-1, 3/0, 3/1, 3/2, 4/-2, 4/-1, 4/0, 4/1, 4/2, 5/-1, 5/0, 5/1, 5/2, 5/3, 6/-1, 6/0, 6/1, 6/2, 6/3, 7/-1, 7/0, 7/1, 7/2, 7/3, 8/-2, 8/-1, 8/0, 8/1, 8/2, 9/-1, 9/0, 9/1, 9/2, 10/-1, 10/0, 10/1, 11/0, 11/1, 12/0}
\begin{scope}
  \foreach \i/\j in \largehexexample { \placehex{\i}{\j}; }
  \begin{scope}[every path/.style=thin,dashed,rounded corners=10pt]
    \draw (h0;0.center) -- (h6;3.center) -- (h12;0.center);
    \draw (h12;0.center) -- (h8;-2.center) -- (h4;0.center);
    \draw (h4;0.center) -- (h6;1.center) -- (h8;0.center);
    \draw (h8;0.center) -- (h4;-2.center) -- (h0;0.center);
  \end{scope}
  \begin{scope}
    \clip ($(h0;0.center)+(-0.25,0)$) -- ($(h6;3.center)+(0,0.25)$) -- ($(h12;0.center)+(0.25,0)$) -- ($(h8;-2.center)+(0,-0.25)$) -- ($(h6;-1.center)+(0,-0.25)$) -- ($(h4;-2.center)+(0,-0.25)$) -- cycle;
    \foreach \i/\j in \largehexexample {
      \foreach \k in {1,...,6} { \reddot(h\i;\j.corner \k); }
      \foreach \k in {1,...,6} { \bluedot(h\i;\j.side \k); }
    }
  \end{scope}
\end{scope}
\end{tikzpicture}
\caption{The \v{C}ech complex in degrees 1 and 2 for a Legendrian knot $\Lambda$.}
\label{fig:cech-big-unknot}
\end{figure}

We now prove that $d^1$ is surjective, and hence that $\check{H}^2(\cV;\cH) = 0$, by playing the following game.  We treat the blue squares and red dots in Figure~\ref{fig:cech-big-unknot} as vertices of a bipartite graph $\Gamma$, whose edges are given by the half-edges of hexagons which connect them.  We call a blue square a \emph{leaf} if it is adjacent to exactly one red dot.  We produce a new graph $\Gamma'$ by removing one of the following:
\begin{itemize}
\item a collection of leaves, or
\item some collection of leaves on edges $\{e_{i_lj_l}\}$ \emph{and} all of the red dots at vertices $\{v_{i_lj_lk_l}\}$ adjacent to them, but only if the corresponding restriction map
\[ \bigoplus_l \cH(V_{i_lj_l}) \to \bigoplus_l \cH(V_{i_lj_lk_l}) \]
is surjective.  (The collection of vertices should be read as a set rather than a list, since some leaves may be adjacent to the same vertex.)
\end{itemize}
To any such graph $\Gamma$, we can associate a map
\[ d_\Gamma: C^1(\Gamma) \to C^2(\Gamma) \]
by letting $C^1(\Gamma)$ be the direct sum of the $\cH(V_{ij})$ corresponding to blue squares along edges $e_{ij}$, letting $C^2(\Gamma)$ be the direct sum of the $\cH(V_{ijk})$ over vertices $v_{ijk}$ with red dots, and taking $d_\Gamma$ to be the sum of all of the corresponding restriction maps $\cH(V_{ij}) \to \cH(V_{ijk})$.  If we have not yet removed any squares or dots, then $d_\Gamma$ coincides with the original differential $d^1: \check{C}^1(\cV;\cH) \to \check{C}^2(\cV;\cH)$.

\begin{lemma} \label{lem:remove-pegs}
If $d_{\Gamma'}$ is surjective, then so is $d_{\Gamma}$.
\end{lemma}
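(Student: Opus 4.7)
The plan is to express $d_\Gamma$ as a block matrix with respect to a natural decomposition and reduce the surjectivity of $d_\Gamma$ to the two hypotheses built into the definition of the move $\Gamma \rightsquigarrow \Gamma'$. Let $L$ denote the set of leaves that have been removed, $B' = B \setminus L$ the set of remaining blue squares, and $D$ the set of red dots removed (so $D = \emptyset$ in Case 1). These give direct-sum decompositions
\[ C^1(\Gamma) = C^1(L) \oplus C^1(\Gamma'), \qquad C^2(\Gamma) = C^2(D) \oplus C^2(\Gamma'). \]

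The first thing I would check is that the ``lower-left'' block $C^1(L) \to C^2(\Gamma')$ of $d_\Gamma$ vanishes: every restriction map from a leaf in $L$ to a red dot that survives in $\Gamma'$ is zero. This is vacuous in Case 1, and in Case 2 it follows directly from the definition of a leaf as a blue square adjacent to \emph{exactly one} red dot, together with the hypothesis that every red dot adjacent to a leaf of $L$ has been included in $D$; hence no leaf in $L$ has a neighbour in $R \setminus D$. With this in place, $d_\Gamma$ takes the block upper triangular form
\[ d_\Gamma = \begin{pmatrix} d_{LD} & d_{B'D} \\ 0 & d_{\Gamma'} \end{pmatrix}, \]
where $d_{LD} \colon C^1(L) \to C^2(D)$ is surjective by the hypothesis of Case 2 (and the top row of blocks is simply absent in Case 1).

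From here the conclusion is a routine two-step lift. Given $\xi = (\xi_D, \xi_{R'}) \in C^2(D) \oplus C^2(\Gamma')$, first use surjectivity of $d_{\Gamma'}$ to choose $\eta_{B'} \in C^1(\Gamma')$ with $d_{\Gamma'}(\eta_{B'}) = \xi_{R'}$; then use surjectivity of $d_{LD}$ to choose $\eta_L \in C^1(L)$ satisfying $d_{LD}(\eta_L) = \xi_D - d_{B'D}(\eta_{B'})$. The pair $(\eta_L, \eta_{B'}) \in C^1(\Gamma)$ then maps to $\xi$ under $d_\Gamma$, proving surjectivity.

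The only step that is not purely formal is the vanishing of the lower-left block, which is a combinatorial statement about how leaves attach to red dots in the tiling; once that is in hand the algebra is immediate. I do not expect any further obstacle, and in particular no induction beyond what is already implicit in the definition of the move is needed for this individual step.
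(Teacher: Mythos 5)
Your proposal takes essentially the same route as the paper: write $d_\Gamma$ in block form with respect to the decomposition into removed and surviving vertices and observe that the lower-left block vanishes in Case 2, so surjectivity reduces to that of the diagonal blocks. The only point I would correct is your characterization of Case 1. You claim that the vanishing of the lower-left block $C^1(L) \to C^2(\Gamma')$ is ``vacuous'' in Case 1; in fact it is neither vacuous nor true there. When $D = \emptyset$, every removed leaf is still adjacent to its unique red dot, which now survives in $\Gamma'$, so the restriction map from $C^1(L)$ to $C^2(\Gamma')$ is generally nonzero. Your two-step lift still produces a valid preimage in that case, but only because the constraint $d_{LD}(\eta_L) = \xi_D - d_{B'D}(\eta_{B'})$ degenerates to $0=0$ and so allows (indeed, requires, for your final claim to hold) the choice $\eta_L = 0$; you should say this explicitly, since with an arbitrary $\eta_L$ the image would acquire an unwanted contribution from the nonvanishing lower-left block. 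The paper sidesteps this by treating Case 1 separately: there $C^2(\Gamma) = C^2(\Gamma')$, so $d_\Gamma(s_{ij},s') = \rho_{ij,ijk}(s_{ij}) + d_{\Gamma'}(s')$ and surjectivity is immediate from $d_{\Gamma'} = d_\Gamma(0,\cdot)$ being surjective, with no claim about any block vanishing. Aside from this, the structure of your argument matches the paper's.
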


\begin{proof}
Supposing that we have only removed a leaf corresponding to the edge $e_{ij}$ and adjacent to the vertex $e_{ijk}$, we can write
\[ C^1(\Gamma) = \cH(V_{ij}) \oplus C^1(\Gamma'), \qquad d_\Gamma(s_{ij},s') = \rho_{ij,ijk}(s_{ij}) + d_{\Gamma'}(s') \]
where $\rho_{ij,ijk}: \cH(V_{ij}) \to \cH(V_{ijk})$ is the corresponding restriction map.  Assuming that $d_{\Gamma'} = d_\Gamma(0,\cdot)$ is surjective, it follows immediately that $d_\Gamma$ is as well.  We can repeat this argument to remove as many leaves as we like.

Now suppose instead that we have removed some leaves along edges $\{e_{i_lj_l}\}$ and all of the adjacent red dots, which are at vertices $\{v_{i_lj_lk_l}\}$.  Letting $A^1 = \bigoplus_l \cH(V_{i_lj_l})$ and $A^2 = \bigoplus_l \cH(V_{i_lj_lk_l})$, we can then write
\[ d_{\Gamma} = \twomatrix{f}{g}{0}{d_{\Gamma'}}: A^1 \oplus C^1(\Gamma') \to A^2 \oplus C^2(\Gamma'), \]
in which the component $A^1 \to C^2(\Gamma')$ is zero because none of the removed blue squares are adjacent to red dots which remain in $\Gamma'$.  The map $f: A^1 \to A^2$ is assumed surjective, so if $d_{\Gamma'}$ is also surjective then  
$d_\Gamma$ is as well.
\end{proof}

If we can remove vertices from $\Gamma$ until no red dots remain, the corresponding $d_{\Gamma'}$ will clearly be surjective since its codomain is zero, and it will follow from Lemma~\ref{lem:remove-pegs} that $d^1: \check{C}^1(\cV;\cH) \to \check{C}^2(\cV;\cH)$ is surjective.  This will establish that $\check{H}^2(\cV;\cH) = 0$, as desired.

\begin{lemma} \label{lem:cusp-surjective}
Let $D_0$ be a hexagonal tile containing a cusp of $\Lambda$, and $D_1$ and $D_2$ the adjacent tiles such that $\Lambda$ intersects the edges $e_{01}$ and $e_{02}$.  Then the component
\[ \cH(V_{01}) \oplus \cH(V_{02}) \to \cH(V_{012}) \]
of the differential $d^1: \check{C}^1(\cV;\cH) \to \check{C}^2(\cV;\cH)$ is surjective.
\end{lemma}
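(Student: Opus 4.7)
The plan is to reduce the lemma to a short block-matrix decomposition after first pinning down the local geometry near the cusp.

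First I will analyze the triple intersection $V_{012}$. Since $D_0$ contains a cusp, the two edges $e_{01}$ and $e_{02}$ through which $\Lambda$ passes are consecutive edges of the hexagon $D_0$ and meet at a common vertex $v_{012}$. The two arcs of the cusp exit $D_0$ through precisely these edges, so $v_{012}$ lies on the opening side of the cusp and hence inside the region $I$. Consequently $V_{012}$ is contained in a single $2$-dimensional stratum, and $\cH(V_{012}) = \Hom(\cF(I),\cG(I))$. Since the two arcs at a cusp of a binary Maslov front carry Maslov potentials differing by $1$, exactly one of $e_{01},e_{02}$ lies on the Maslov-$1$ arc $u$ and the other on the Maslov-$0$ arc $d$; without loss of generality assume $e_{01}\subset u$ and $e_{02}\subset d$.

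Next I will extract canonical splittings of $\cF(I)$ and $\cG(I)$ from the cusp conditions. The microsupport condition at $u$ provides a surjection $p\colon\cF(I)\twoheadrightarrow\cF(O)$, while the one at $d$ provides an injection $i\colon\cF(O)\hookrightarrow\cF(I)$, and the commutativity of Figure~\ref{fig:mucusp} forces $p\circ i$ to be an isomorphism. After absorbing this isomorphism this yields $\cF(I)=\ker p\oplus \img i$, and analogously $\cG(I)=\ker p_\cG\oplus \img i_\cG$. Combining this with the local analysis of edge restrictions from Section~\ref{ssec:cech-h2}, and noting that $V_{012}$ lies on the $I$-side of $e_{01}$ (which is the ``below the strand'' side for the Maslov-$1$ arc $u$, matching $h_1$ in Figure~\ref{fig:cech-d1-local}) and on the $I$-side of $e_{02}$ (the ``above the strand'' side for the Maslov-$0$ arc $d$, matching $h_2$), the two restrictions to $\cH(V_{012})$ have images $\{\varphi : \varphi(\ker p)\subseteq\ker p_\cG\}$ and $\{\psi : \psi(\img i)\subseteq\img i_\cG\}$ respectively.

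The final step is purely linear algebra. Given $\varphi\in\Hom(\cF(I),\cG(I))$, write it as a $2\times 2$ block matrix $\varphi=\twomatrix{\alpha}{\beta}{\gamma}{\delta}$ with respect to the splittings above, and decompose $\varphi=\varphi_1+\varphi_2$ with $\varphi_1=\twomatrix{\alpha}{\beta}{0}{0}$ and $\varphi_2=\twomatrix{0}{0}{\gamma}{\delta}$. Then $\img\varphi_1\subseteq\ker p_\cG$ so $\varphi_1$ lies in the image of $\cH(V_{01})\to\cH(V_{012})$, while $\img\varphi_2\subseteq\img i_\cG$ so $\varphi_2$ lies in the image of $\cH(V_{02})\to\cH(V_{012})$, proving surjectivity.

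The step I expect to be delicate is the identification in the second paragraph: one must correctly match the two triple intersections of Figure~\ref{fig:cech-d1-local} against the geometric position of $V_{012}$ relative to each arc, and carefully use the cusp diagram to deduce that the composition $p\circ i$ is actually an isomorphism. Once these identifications are settled, the block decomposition at the end is forced.
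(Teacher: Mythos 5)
Your proof is correct and follows essentially the same strategy as the paper: identify $V_{012}$ as lying in the inner stratum $I$, deduce the two image constraints from the $h_1$/$h_2$ analysis in Figure~\ref{fig:cech-d1-local} applied to the Maslov-1 and Maslov-0 arcs, use the isomorphism $p\circ i$ to obtain compatible splittings of $\cF(I)$ and $\cG(I)$, and finish by elementary block linear algebra. The only cosmetic difference is your choice of final decomposition: you split a general map into its two ``row'' components relative to the splitting $\ker p_\cG\oplus\img i_\cG$, whereas the paper observes that the two constraints amount to upper- and lower-triangularity (with respect to the paper's ordering of the summands) and notes that any matrix is a sum of such; both decompositions satisfy the required conditions, and indeed yours is a particular instance of theirs.
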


\begin{proof}
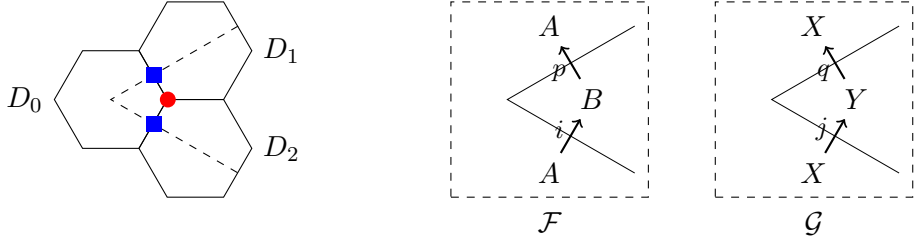
\begin{figure}
\begin{tikzpicture}
\def\cusptiles{0/0,1/0,1/1}
\begin{scope}
  \foreach \i/\j in \cusptiles { \placehex{\i}{\j}; }
  \draw[thin,dashed] (h1;1.side 6) -- (h0;0.center) -- (h1;0.side 5);
  \reddot(h0;0.corner 6);
  \bluedot(h0;0.side 5);
  \bluedot(h0;0.side 6);
  \node at (h0;0.corner 3) [left=0.25] {$D_0$};
  \node at (h1;1.corner 6) [right=0.25] {$D_1$};
  \node at (h1;0.corner 6) [right=0.25] {$D_2$};
\end{scope}
\begin{scope}[xshift=150, local bounding box=scopeF, hexagon/.append style={draw=none}]
  \foreach \i/\j in \cusptiles { \placehex{\i}{\j}; }
  \draw[very thin] (h1;1.side 6) -- (h0;0.center) -- (h1;0.side 5);
  \node (A) at (h1;0.side 3) {$A$};
  \node (B) at (h1;0.side 1) {$B$};
  \node (C) at (h1;1.side 2) {$A$};
  \begin{scope}[every node/.style={font=\small,left}]
    \draw [thick, ->] (A) -- node[pos=0.7] {$i$} (B);
    \draw [thick, ->] (B) -- node[pos=0.2] {$p$} (C);
  \end{scope}
\end{scope}
\begin{scope}[xshift=250, local bounding box=scopeG, hexagon/.append style={draw=none}]
  \foreach \i/\j in \cusptiles { \placehex{\i}{\j}; }
  \draw[very thin] (h1;1.side 6) -- (h0;0.center) -- (h1;0.side 5);
  \node (A) at (h1;0.side 3) {$X$};
  \node (B) at (h1;0.side 1) {$Y$};
  \node (C) at (h1;1.side 2) {$X$};
  \begin{scope}[every node/.style={font=\small,left}]
    \draw [thick, ->] (A) -- node[pos=0.7] {$j$} (B);
    \draw [thick, ->] (B) -- node[pos=0.2] {$q$} (C);
  \end{scope}
\end{scope}
\foreach \s in {F,G} {
  \draw[dashed] (scope\s.north east) rectangle (scope\s.south west);
  \node at (scope\s.south) [below=2] {$\mathcal{\s}$};
}
\end{tikzpicture}
\caption{A portion of the \v{C}ech complex near a cusp of $\Lambda$.}
\label{fig:cusp-tiles}
\end{figure}

Suppose that the sheaves $\cF$ and $\cG$ are represented by diagrams
\[ A \xrightarrow{i} B \xrightarrow{p} A \qquad\mathrm{and}\qquad X \xrightarrow{j} Y \xrightarrow{q} X \]
near the cusp, as illustrated in Figure~\ref{fig:cusp-tiles}.  Then we have seen that
\begin{align*}
\img\big(\cH(V_{02}) \to \cH(V_{012})\big) &= \{ \psi: B \to Y \mid \img(\psi\circ i) \subset \img(j) \} \\
\img\big(\cH(V_{01}) \to \cH(V_{012})\big) &= \{ \varphi: B \to Y \mid \ker(p) \subset \ker(q\circ\varphi) \}.
\end{align*}
We therefore wish to show that every $f: B \to Y$ can be written as a sum $f = \psi + \varphi$, where $\psi$ and $\varphi$ are as above.

Since $p\circ i: A \to A$ is an isomorphism and likewise for $q\circ j: X \to X$, we have direct sum decompositions
\[ B \cong A \oplus K, \qquad Y \cong X \oplus L \]
for which $i(a) = (a,0)$ and $p(a,k) = \alpha(a)$, and $j(x) = (x,0)$ and $q(x,l) = \beta(x)$ for some automorphisms $\alpha: A \to A$ and $\beta: X \to X$.  Viewing $\psi$ and $\varphi$ as maps $A\oplus K \to X \oplus L$, the conditions $\img(\psi\circ i) \subset \img(j)$ and $\ker(p) \subset \ker(q\circ\varphi)$ are equivalent to $\psi$ and $\varphi$ having block forms
\[ \psi = \twomatrix{*}{*}{0}{*}, \qquad \varphi=\twomatrix{*}{0}{*}{*}, \]
so it is clear that any $f: B \to Y$ can be written as such a sum, as desired.
\end{proof}

\newcommand{\rotatey}{\protect\rotatebox[origin=c]{90}{{\sf Y}}}
\begin{prop} \label{prop:remove-y}
Suppose that $\Gamma$ contains a subgraph $\rotatey$ (resembling the letter ``{\sf Y}'', rotated 90 degrees counterclockwise) of the following form:
\begin{center}
\begin{tikzpicture}[hexagon/.append style=dotted]
\foreach \i/\j in {0/0,1/0,1/1,2/0} { \placehex{\i}{\j}; }
\draw (h0;0.corner 6) edge (h1;0.corner 1) edge (h0;0.corner 1) edge (h0;0.corner 5);
\foreach \r in {h1;0.corner 1, h1;0.corner 2} { \reddot (\r); }
\foreach \b in {h0;0.side 5, h0;0.side 6, h1;0.side 1} { \bluedot (\b); }
\node at (h0;0.center) {$D_0$};
\node at (h1;1.center) {$D_1$};
\node at (h1;0.center) {$D_2$};
\node at (h2;0.center) {$D_3$};
\end{tikzpicture}
\end{center}
\noindent consisting of the three blue squares along three edges $e_{01}$, $e_{02}$, and $e_{12}$, and the two red dots at the endpoints $v_{012}$ and $v_{123}$ of the horizontal edge $e_{12}$.  Suppose moreover that the blue squares on $e_{01}$ and $e_{02}$ are both leaves.  (We make no claims about whether there are vertices along the dotted edges.)  Let $\Gamma' = \Gamma \ssm \rotatey$ be the subgraph produced by removing these five vertices.  If $d_{\Gamma'}$ is surjective, then $d_{\Gamma}$ is surjective as well.
\end{prop}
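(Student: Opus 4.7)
I would mimic the block-matrix reduction from the proof of Lemma~\ref{lem:remove-pegs}. Set $A^1 = \cH(V_{01}) \oplus \cH(V_{02}) \oplus \cH(V_{12})$ and $A^2 = \cH(V_{012}) \oplus \cH(V_{123})$, so that $C^k(\Gamma) = A^k \oplus C^k(\Gamma')$ for $k=1,2$. The three removed blue squares are each adjacent only to removed red dots: the leaves on $e_{01}$ and $e_{02}$ touch only $v_{012}$, while the square on $e_{12}$ touches only $v_{012}$ and $v_{123}$. Hence no component of $d_\Gamma$ maps $A^1$ into $C^2(\Gamma')$, and the differential takes the block form
\[
d_\Gamma = \begin{pmatrix} f & g \\ 0 & d_{\Gamma'} \end{pmatrix}\colon A^1 \oplus C^1(\Gamma') \to A^2 \oplus C^2(\Gamma').
\]
Surjectivity of $d_{\Gamma'}$ therefore reduces the task to surjectivity of $f\colon A^1 \to A^2$.

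To verify that $f$ is surjective, I would exploit the tiling convention that the horizontal edge $e_{12}$ is disjoint from the front $\Lambda$. Consequently both restriction maps $\alpha\colon \cH(V_{12}) \to \cH(V_{012})$ and $\beta\colon \cH(V_{12}) \to \cH(V_{123})$ are isomorphisms. Given a target $(y_1, y_2)\in A^2$, first solve for the $\cH(V_{123})$ component by setting $s_{12} := \beta^{-1}(y_2)$. The remaining obstruction is to produce $(s_{01}, s_{02}) \in \cH(V_{01}) \oplus \cH(V_{02})$ whose image under the restriction map $r_1 \oplus r_2\colon \cH(V_{01}) \oplus \cH(V_{02}) \to \cH(V_{012})$ equals $y_1 - \alpha(s_{12})$. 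Thus the surjectivity of $f$ reduces to the surjectivity of $r_1 \oplus r_2$.

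This final surjectivity is exactly the content of Lemma~\ref{lem:cusp-surjective}. The principal obstacle of the argument is then to verify that the Y--subgraph hypothesis, combined with the tile classification in Figure~\ref{fig:possible-tiles}, forces the geometric configuration of Lemma~\ref{lem:cusp-surjective} to hold at $v_{012}$---namely, that $D_0$ contains a cusp of $\Lambda$ whose two adjacent strands exit through the edges $e_{01}$ and $e_{02}$. Intuitively, in any non-cusp configuration around $v_{012}$, at least one of the far endpoints of $e_{01}$ or $e_{02}$ would be a triple intersection in the interior of $D$ and would therefore carry a red dot, contradicting the leaf hypothesis on the corresponding blue square; making this precise amounts to a short case analysis.
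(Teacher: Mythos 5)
Your block-matrix reduction is correct and closely parallels the paper's strategy: you peel off the two leaves and the leftover leaf on $e_{12}$ (whose restriction to $v_{123}$ is an isomorphism since $e_{12}$ is horizontal), reducing everything to surjectivity of $\rho_{01,012}\oplus\rho_{02,012}:\cH(V_{01})\oplus\cH(V_{02})\to\cH(V_{012})$. The paper reaches the same reduction by applying Lemma~\ref{lem:remove-pegs} twice rather than unfolding the matrix, but this is cosmetic.

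The gap is in your justification of that surjectivity. You assert that the $\rotatey$-subgraph hypothesis forces $D_0$ to contain a cusp meeting $e_{01}$ and $e_{02}$, so that Lemma~\ref{lem:cusp-surjective} applies directly. This is false. Being a leaf is a statement about the \emph{far} endpoints of $e_{01}$ and $e_{02}$ --- that the corresponding vertices carry no red dot, either because they lie on $\partial D$ or because the earlier $\rotatey$'s removal already deleted the dot there --- and it places no constraint whatsoever on what $\Lambda$ does inside $D_0$. In the inductive use of this proposition, $D_0$ is routinely a tile containing a single strand, a crossing, or nothing at all. Concretely, the cases you are missing are: (i) $\Lambda$ avoids $e_{01}$ (or $e_{02}$), in which case the corresponding restriction map $\rho_{0j,012}$ is already an isomorphism and there is nothing to do; and (ii) $D_0$ contains a crossing, in which case both strands have Maslov potential $0$ (since $\Lambda$ is a rainbow braid closure), and then the downward restriction $\rho_{01,012}$ alone is surjective by the Maslov-$0$ analysis in Section~\ref{ssec:cech-h2}. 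Only when $\Lambda$ meets both $e_{01}$ and $e_{02}$ through a cusp does Lemma~\ref{lem:cusp-surjective} enter, and even then the cusp must be recognized as one of several possibilities, not deduced from the leaf condition.
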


\begin{proof}
Our goal will be to apply Lemma~\ref{lem:remove-pegs} to the three vertices on $e_{01}$, $e_{02}$, and $v_{012}$.  If we can do this, then the vertex on $e_{12}$ will become a leaf connected only to $v_{123}$, and the restriction map
\[ \rho_{12,123}: \cH(V_{12}) \to \cH(V_{123}) \]
is automatically an isomorphism since the front projection of $\Lambda$ does not pass through the horizontal edge $e_{12}$.  In this case, we can apply Lemma~\ref{lem:remove-pegs} again to remove the vertices on $e_{12}$ and $v_{123}$, leaving us with the subgraph $\Gamma'$.  It will thus suffice to show that the sum of restriction maps
\[ \rho_{01,012}+\rho_{02,012}: \cH(V_{01}) \oplus \cH(V_{02}) \to \cH(V_{012}) \]
is surjective.

If the front does not pass through $e_{01}$, then the restriction map $\rho_{01,012}: \cH(V_{01}) \to \cH(V_{012})$ is an isomorphism, and likewise if it avoids $e_{02}$ then $\rho_{02,012}: \cH(V_{02}) \to \cH(V_{012})$ is an isomorphism.  This establishes the desired surjectivity except when $\Lambda$ intersects both of the edges $e_{01}$ and $e_{02}$, in which case $D_0$ contains either a left cusp of $\Lambda$ or a crossing.

If $D_0$ contains a crossing, then since both strands have Maslov potential 0, we have argued that the downward restriction map $\rho_{01,012}: \cH(V_{01}) \to \cH(V_{012})$ is surjective.  Otherwise $D_0$ contains a cusp, and then Lemma~\ref{lem:cusp-surjective} says precisely that $\rho_{01,012}+\rho_{02,012}$ is surjective, as desired.
\end{proof}

\begin{prop} \label{prop:h2shom-vanishes}
Let $\Lambda$ be a rainbow braid closure, and let $\cF \in \Sh_r(\Lambda)$ and $\cG \in \Sh_s(\Lambda)$ be sheaves of $\kk$-modules.  Then
\[ H^i(\R^2; \sHom(\cF,\cG)) = 0 \]
for all $i \geq 2$.
\end{prop}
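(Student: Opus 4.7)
The plan is to complete the \v{C}ech computation initiated in Section \ref{ssec:cech-h2}. By construction, the cover $\cV$ admits no nonempty four-fold intersections on the support of $\cH = \sHom(\cF,\cG)$, so $\check{C}^k(\cV;\cH) = 0$ for all $k \geq 3$, and it thus suffices to show that $d^1 : \check{C}^1(\cV;\cH) \to \check{C}^2(\cV;\cH)$ is surjective. Combined with $\lim_{\longrightarrow \cU} \check{H}^*(\cU;\cH) \cong H^*(\R^2;\cH)$ as $\cU$ runs over refining open covers, this yields the desired vanishing $H^i(\R^2;\sHom(\cF,\cG)) = 0$ for all $i\geq 2$.

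To prove surjectivity of $d^1$, I would play the reduction game encoded by Lemma \ref{lem:remove-pegs} and Proposition \ref{prop:remove-y}, with the goal of reducing the bipartite graph $\Gamma$ to a subgraph $\Gamma'$ containing no red dots (so that $d_{\Gamma'}$ is trivially surjective onto its zero codomain). The key geometric input is that, for a rainbow braid closure, the support of $\cH$ is a finite region of $\R^2$ bounded above by the topmost arc of the front and below by the bottom strand of the braid; outside this region $\cF$ and $\cG$ both vanish and hence so does $\cH$. In particular, any blue square on the outer boundary of the support is a leaf, because any would-be adjacent red dot on the outside lies in a configuration of tiles where $\cH = 0$ and so contributes trivially to the \v{C}ech complex.

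The removal strategy is then to sweep inward from this boundary. Near each cusp of the front, Lemma \ref{lem:cusp-surjective} provides the surjectivity required to set up a \rotatey-subgraph as in Proposition \ref{prop:remove-y}; near each braid crossing, the downward restriction on either side of the crossing is surjective (as observed in the analysis of arcs of Maslov potential $0$ meeting at a crossing in Section \ref{ssec:cech-h2}), which likewise supplies a \rotatey-configuration whose horizontal edge $e_{12}$ runs between two tiles adjacent to the crossing tile. Eliminating red dots at each cusp and crossing in turn leaves only red dots along horizontal edges of the tiling, across which the front does not pass, so the adjacent restriction maps are isomorphisms. These remaining red dots are then removed one-by-one by repeated application of Lemma \ref{lem:remove-pegs}, pairing each residual leaf with its unique neighboring red dot.

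The main obstacle is the combinatorial bookkeeping that ensures the reductions can be carried out consistently: whenever Proposition \ref{prop:remove-y} is invoked at a given cusp or crossing, the two blue squares it uses must genuinely be leaves at that moment, which requires that red dots on their outer side have already been eliminated. The rainbow braid closure structure provides a natural partial order—outer-to-inner for the nested cusps, together with a left-to-right sweep through the braid for the crossings—along which the reductions can be applied so that the required leaves are available at each stage. Verifying that this ordering yields a valid sequence of reductions until all red dots have been removed is the only technical content beyond the general lemmas already established.
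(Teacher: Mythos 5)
Your setup is correct: the Čech complex has $\check{C}^k(\cV;\cH)=0$ for $k\geq 3$, so everything reduces to the surjectivity of $d^1$, which you propose to establish by the removal game of Lemma~\ref{lem:remove-pegs} and Proposition~\ref{prop:remove-y}. You also correctly identify that cusps and crossings are where Lemma~\ref{lem:cusp-surjective} and the surjectivity of the downward (Maslov~$0$) restriction enter. To this extent you are following the same route as the paper.

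However, you explicitly leave a gap where the paper actually does the work. You write that ``the rainbow braid closure structure provides a natural partial order---outer-to-inner for the nested cusps, together with a left-to-right sweep through the braid for the crossings'' and that ``verifying that this ordering yields a valid sequence of reductions\ldots is the only technical content beyond the general lemmas already established,'' but you do not carry out that verification, and as stated the ordering is problematic. Every $\rotatey$ subgraph has its two arm edges $e_{01},e_{02}$ pointing \emph{leftward}, so to remove a given $\rotatey$ one needs the red dots at the other ends of those arm edges to have already been deleted---red dots that lie on horizontal edges strictly to the \emph{left}. An ``outer-to-inner'' sweep through the right cusps would go right to left and thus cannot supply this; more seriously, the arm edges of a $\rotatey$ at a braid crossing terminate at red dots lying on horizontal edges with no cusp or crossing at all, so a ``cusps and crossings first, then the rest'' schedule does not make those arms leaves when you need them. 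You also suggest that after removing $\rotatey$s near cusps and crossings, the leftover red dots lie on horizontal edges ``across which the front does not pass''---but by construction the front never passes through any horizontal edge, so this does not single out a subclass, and the remaining red dots still need to be matched against leaves in a globally consistent order.

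The paper resolves exactly this issue with a cleaner observation that your proposal misses: every red dot lies on a \emph{unique} horizontal edge of the tiling, every horizontal edge in the region $D$ carries a \emph{unique} $\rotatey$, and all these $\rotatey$s are pairwise disjoint. Sorting the horizontal edges left-to-right by horizontal position and removing the corresponding $\rotatey$s in that order makes the leaf property automatic by induction: if an arm edge of $\rotatey_j$ were not a leaf, its far-end red dot would belong to some $\rotatey_i$ with $i<j$ (since its horizontal edge lies strictly to the left), which has already been removed---contradiction. This handles cusps, crossings, and ``generic'' horizontal edges uniformly via the three cases of Proposition~\ref{prop:remove-y}, with no need to treat cusps/crossings and the rest in separate phases or to define a more elaborate partial order. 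You should supply this (or an equivalent precise induction) in place of the asserted-but-unverified ordering.
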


\begin{proof}
Let $\cH = \sHom(\cF,\cG)$.  As explained at the beginning of this subsection, we have
\[ H^*(\R^2; \sHom(\cF,\cG)) \cong \lim_{\substack{\longrightarrow \\ \cU}} \check{H}^*(\cU; \cH), \]
and so it suffices to exhibit for any open cover $\cU$ of $\R^2$ a refinement $\cV$ such that $\check{H}^i(\cV;\cH) = 0$ for $i \geq 2$.  We produce $\cV$ from a tiling of $\R^2$ by hexagons of sufficiently small side length $\epsilon > 0$, as described above.  We have already seen that $\check{C}^i(\cV;\cH)=0$ for $i \geq 3$, so we need only show that $d^1: \check{C}^1(\cV;\cH) \to \check{C}^2(\cV;\cH)$ is surjective.

The key observation we need is that in the region $D = \bigcup D_i$ containing the front, every horizontal edge belongs to a unique $\rotatey$, and all of these $\rotatey$ subgraphs are disjoint.  We sort the horizontal edges on the interior of $D$ from leftmost to rightmost, breaking ties arbitrarily, and let $\rotatey_1, \dots, \rotatey_n$ denote the corresponding $\rotatey$ subgraphs.  (Here we can ignore horizontal edges on the boundary of $D$, because they lie in the unbounded region of the complement of the front in $\R^2$, and hence $\cH$ vanishes along them.)  We will now apply Proposition~\ref{prop:remove-y} to remove $\rotatey_1, \dots, \rotatey_n$ in order.

Fix $j \leq n$ and suppose that we have removed $\rotatey_i$ for all $i < j$.  Then we claim that the two blue squares on the left end of $\rotatey_j$ must be leaves.  Indeed, labeling the tiles around $\rotatey_j$ as in Figure~\ref{fig:remove-y-inductively}, suppose that the blue square on $e_{13}$ is not a leaf.  Then the red dot at $v_{123}$ must not have been removed yet.  It belongs to some $\rotatey_i$ which must satisfy $i<j$, since $e_{12}$ lies strictly to the left of $e_{34}$, and which was therefore already removed.  But this $\rotatey_i$ includes the red dot at $v_{123}$, so we have a contradiction.  The blue square on $e_{13}$ is therefore a leaf, and applying the same argument at $v_{014}$ says that the blue square on $e_{14}$ is a leaf as well.  We now remove $\rotatey_j$ and continue by induction.

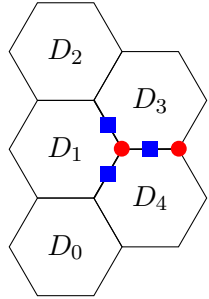
\begin{figure}
\begin{tikzpicture}
\foreach \i/\j in {0/-1,0/0,0/1,1/0,1/1} { \placehex{\i}{\j}; }
\draw (h0;0.corner 6) edge (h1;0.corner 1) edge (h0;0.corner 1) edge (h0;0.corner 5);
\foreach \r in {h1;0.corner 1, h1;0.corner 2} { \reddot (\r); }
\foreach \b in {h0;0.side 5, h0;0.side 6, h1;0.side 1} { \bluedot (\b); }
\node at (h0;-1.center) {$D_0$};
\node at (h0;0.center) {$D_1$};
\node at (h0;1.center) {$D_2$};
\node at (h1;0.center) {$D_4$};
\node at (h1;1.center) {$D_3$};
\end{tikzpicture}
\caption{A subgraph $\rotatey_j$ which we wish to remove from the graph $\Gamma$.}
\label{fig:remove-y-inductively}
\end{figure}

The end result of this induction is that all of the $\rotatey$ subgraphs have been removed.  Since every red dot lies on a unique horizontal edge, and was thus a vertex of some $\rotatey_i$, the resulting graph $\Gamma'$ has no red vertices, and hence the map $d_{\Gamma'}$ is surjective.  This implies by Proposition~\ref{prop:remove-y} that the original $d_\Gamma = d^1: \check{C}^1(\cV;\cH) \to \check{C}^2(\cV;\cH)$ is surjective, as desired.
\end{proof}

\subsection{Vanishing of \texorpdfstring{$R^q\sHom(\cF,\cG)$}{RHom(F,G)} for \texorpdfstring{$q \geq 1$}{q >= 1}} \label{ssec:rhom-equals-hom}

The main result of this subsection is the following.

\begin{prop} \label{prop:rhom-equals-hom}
Let $\Lambda$ be a rainbow braid closure, and let $\cF \in \Sh_r(\Lambda,\coeffs)$ and $\cG \in \Sh_s(\Lambda,\coeffs)$ be sheaves of $\coeffs$-modules for some field $\coeffs$.  Then
\[ R^q\sHom(\cF,\cG) = 0 \]
for all $q \geq 1$.  In other words, $R\sHom(\cF,\cG) \simeq \sHom(\cF,\cG)$.
\end{prop}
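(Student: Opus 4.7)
The plan is to verify the vanishing of $R^q\sHom(\cF,\cG)$ by checking its stalks. Since both $\cF$ and $\cG$ are constructible with respect to the stratification $\mathcal{S}_\Lambda$ of $\R^2$ induced by the front of $\Lambda$, so is $R\sHom(\cF,\cG)$, and I would use this to reduce the problem to verifying vanishing at one representative point of each stratum type: the interior of a $2$-dimensional region, the interior of an arc, a cusp, and a crossing. Using the combinatorial equivalence of Section~\ref{sec:comb-descr-constr}, the stalk at a point in a stratum $s$ can be identified with $\Ext^q$ computed in the functor category on the local sub-poset consisting of $s$ together with all strata whose closure contains $s$.

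For a point in a $2$-dimensional region this sub-poset is a single point and the desired vanishing is immediate over a field. For a point on an arc $a$, the microsupport condition (Figure~\ref{fig:muarc}) forces the downward restriction $\cF(s(a))\to\cF(D)$ to be an isomorphism, so the local model collapses to the $A_2$-type quiver $a\to U$, with the structure map being injective when $\mu(a)=0$ and surjective when $\mu(a)=1$. I would then compute $\Ext^1$ between two such objects directly: in the surjective case every candidate extension class $h:V_a\to W_U$ lifts along the surjection $W_a\twoheadrightarrow W_U$ because $\coeffs$ is a field, and the injective case is dual using a splitting of the cokernel. Higher $\Ext^{\geq 2}$ vanishes automatically because the $A_2$-quiver has global dimension one.

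At a crossing on the rainbow closure, the fact that $\cF(S)=0$ on the southernmost region, combined with the downward isomorphisms (Figure~\ref{fig:mucross}), forces $\cF(s(sw))=\cF(s(se))=\cF(s(x))=0$. The acyclicity condition at the crossing then becomes the statement that $\cF(s(nw))\oplus\cF(s(ne))\to\cF(N)$ is an isomorphism, which together with the isomorphisms $\cF(s(nw))\cong\cF(W)$ and $\cF(s(ne))\cong\cF(E)$ shows that the local sheaf is determined by the pair of vector spaces $(\cF(W),\cF(E))$. The local category is therefore equivalent to $\mathrm{Vect}_\coeffs\times\mathrm{Vect}_\coeffs$, so $\Ext^{\geq 1}$ trivially vanishes. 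An entirely analogous argument at a cusp collapses the local model to a single vector space.

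The main obstacle I expect is making the identification between stalks of $R^q\sHom$ on $\R^2$ and $\Ext^q$ in the local functor category fully rigorous. This should follow from the quasi-equivalence of Section~\ref{sec:comb-descr-constr} applied to the star of each stratum, using that the star is itself a regular cell complex, but the passage between derived $\sHom$ on the ambient space and derived $\Hom$ in the local poset category requires some care to pin down.
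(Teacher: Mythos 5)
Your computations at two-dimensional strata and along arcs are essentially correct, and the arc case is a quiver-representation rephrasing of the paper's Propositions~\ref{prop:rhom-strand-0} and \ref{prop:rhom-strand-1}. The $A_2$ (or more precisely $A_3$-shaped, but tree-type) quiver does have global dimension one, and the direct computation of $\Ext^1$ using the injectivity or surjectivity of the structure maps goes through. However, your treatment of crossings and cusps has two genuine gaps.

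First, the assumptions $\cF(S)=0$ at a crossing and (implicitly) $\cF(O)=0$ at a cusp fail in general. The proposition is stated for arbitrary rainbow braid closures, whose crossings may lie above several strands and hence over a region where $\cF$ is nonzero. Even in the paper's central example $\Lambda_m$, the two \emph{inner} cusps of the rainbow front have $\cF(O)=V\neq 0$ (see Figure~\ref{fig:cusp2m}), so the cusp local model does not collapse to a single vector space as you assert. Without this collapse your reduction does not apply. Second, even where the collapse does hold, the statement ``the local category is $\mathrm{Vect}\times\mathrm{Vect}$, so $\Ext^{\geq 1}$ trivially vanishes'' conflates two categories: the full subcategory of microsupported sheaves (which is what becomes semisimple) and the ambient local constructible category (where the stalk of $R^q\sHom$ actually lives). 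Closure under extensions lets you transfer $\Ext^1$, but there is no a priori comparison for $\Ext^2$, and the ambient constructible category of a stratified disk genuinely has cohomological dimension two. One can in fact check $\Ext^2$ vanishes for the relevant $j_!$-sheaves by adjunction, but your argument does not supply this step.

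The paper's proof takes a route that sidesteps both problems. After establishing (Lemma~\ref{lem:rhom-constant}, Propositions~\ref{prop:rhom-strand-0}, \ref{prop:rhom-strand-1}) that $R^q\sHom(\cF,\cG)$ for $q\geq 1$ is a sum of skyscraper sheaves at crossings and cusps, it does not compute those stalks directly. Instead it observes that any crossing or cusp neighborhood of $\Lambda$ is reproduced verbatim inside the rainbow closure $\Lambda_0$ of the unknot braid $\sigma_1\cdots\sigma_{n-1}$, where the global $\Ext^{\geq 1}$ vanishes by the equivalence $\Sh(\Lambda_{\std},\coeffs)\simeq D(\kk\text{-mod})$. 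Combining this with the vanishing $H^{\geq 2}(\R^2;\sHom(\cF_0,\cG_0))=0$ from Proposition~\ref{prop:h2shom-vanishes}, the local-to-global $\Ext$ spectral sequence collapses and forces all the skyscraper stalks for $\Lambda_0$ — and hence for $\Lambda$ — to vanish. This global-to-local maneuver is exactly what handles crossings and cusps where $\cF$ does not vanish in a neighboring region, which is the case your proposal does not cover.
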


Proposition~\ref{prop:rhom-equals-hom} can be checked locally, since for any open $U \subset \R^2$ we have
\[ R\sHom(\cF,\cG)|_U \simeq R\sHom(\cF|_U,\cG|_U). \]
Indeed, if $i: U \to \R^2$ is the inclusion map, and $0 \to \cG \to \cI^\bullet$ is an injective resolution, then $i^*$ is exact and sends injective sheaves to injective sheaves \cite[Proposition~2.4.1]{KashiSchapi}, so $0 \to \cG|_U \to \cI|_U^\bullet$ is also an injective resolution of $\cG|_U$.  (We use $i^*$ both here and in the sequel for the inverse image functor, as opposed to the notation $i^{-1}$ of \cite{KashiSchapi}.)  Then we have
\[ R\sHom(\cF,\cG)|_U = \sHom(\cF,\cI^\bullet)|_U \simeq \sHom(\cF|_U,\cI|^\bullet_U) = R\sHom(\cF|_U,\cG|_U) \]
as claimed.

We thus prove Proposition~\ref{prop:rhom-equals-hom} in several steps.  We will first prove it for contractible open subsets of 2-dimensional strata in Lemma~\ref{lem:rhom-constant}, and then for small open balls around points on the interior of strands in Propositions~\ref{prop:rhom-strand-0} and \ref{prop:rhom-strand-1} (for Maslov potentials 0 and 1 respectively).  This reduces the theorem to studying neighborhoods of crossings and cusps, which we then do using a model computation for rainbow braid closures representing a Legendrian unknot.  This last computation relies in turn on the vanishing of $H^2(\R^2;\sHom(\cF,\cG))$ established in Proposition~\ref{prop:h2shom-vanishes}.

We begin with neighborhoods of points in 2-dimensional strata.

\begin{lemma} \label{lem:rhom-constant}
Let $\csheaf[V][M]$ be the constant sheaf on a connected manifold $M$ whose stalk is a finite-dimensional $\kk$-vector space $V$, and let $\cF$ be any sheaf of $\kk$-modules on $M$.  Then
\[ R\sHom(\csheaf[V][M], \cF) \simeq \sHom(\csheaf[V][M], \cF). \]
In particular, if $W$ is another $\kk$-vector space then $R\sHom(\csheaf[V][M],\csheaf[W][M])$ is the constant sheaf on $M$ with stalk $\Hom(V,W)$.
\end{lemma}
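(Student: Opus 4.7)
The plan is to reduce the entire lemma to the case $V = \kk$, using that $V$ is finite-dimensional over a field. A choice of basis gives an isomorphism of sheaves $\csheaf[V][M] \cong \csheaf[\kk][M]^{\oplus \dim V}$, and since $R\sHom(-, \cF)$ carries finite direct sums in its first argument to finite direct sums, the general claim follows from the case $V = \kk$.

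For $V = \kk$, I would observe that the functor $\sHom(\csheaf[\kk][M], -)$ is naturally isomorphic to the identity functor on sheaves of $\kk$-modules on $M$: for any open $U \subset M$ and any sheaf $\cG$, evaluation at the tautological section $1 \in \csheaf[\kk][M](U)$ yields a natural isomorphism $\Hom(\csheaf[\kk][U], \cG|_U) \xrightarrow{\sim} \cG(U)$ (for disconnected $U$ one uses the sheaf axiom to reduce to connected components), which sheafifies to an isomorphism $\sHom(\csheaf[\kk][M], \cG) \cong \cG$. Since the identity functor is exact, applying $\sHom(\csheaf[\kk][M], -)$ to an injective resolution $\cF \to \cI^\bullet$ yields a resolution of $\cF$ itself; hence $R\sHom(\csheaf[\kk][M], \cF) \simeq \cF \simeq \sHom(\csheaf[\kk][M], \cF)$, proving the first assertion.

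For the ``in particular'' statement, combining the isomorphism $\sHom(\csheaf[\kk][M], \csheaf[W][M]) \cong \csheaf[W][M]$ established above with the direct sum decomposition of $\csheaf[V][M]$ yields
\[
\sHom(\csheaf[V][M], \csheaf[W][M]) \cong \csheaf[W][M]^{\oplus \dim V} \cong \csheaf[\Hom(V,W)][M],
\]
the constant sheaf with stalk $\Hom(V, W)$.

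This lemma is essentially formal, so I do not anticipate any significant obstacle; the real work in Section~\ref{ssec:rhom-equals-hom} will lie in the subsequent local computations near arcs, crossings, and cusps, where $\cF$ and $\cG$ are no longer constant and a direct appeal to the identity-functor argument is not available.
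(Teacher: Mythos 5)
Your proof is correct, and it reaches the same conclusion as the paper's but by a slightly different route. The paper inducts on $\dim V$: it chooses a one-dimensional subspace $L \subset V$ with quotient $X = V/L$, applies $R\sHom(\cdot,\cF)$ to the short exact sequence $0 \to \csheaf[L][M] \to \csheaf[V][M] \to \csheaf[X][M] \to 0$, and uses the inductive hypothesis together with the resulting long exact sequence to see that $R^j\sHom(\csheaf[V][M],\cF)$ vanishes for $j>0$. You instead pick a basis and split $\csheaf[V][M] \cong \csheaf[\kk][M]^{\oplus\dim V}$, then use that $R\sHom(\cdot,\cF)$ sends finite direct sums in the first variable to finite direct sums. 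Both reduce to the key observation that $\sHom(\csheaf[\kk][M],\cdot)$ is the identity functor, hence exact with no higher derived functors. Your approach is a bit more direct and avoids the induction entirely, exploiting the fact that every short exact sequence of finite-dimensional $\kk$-vector spaces splits; the paper's filtration argument is the version one would want if the stalk $V$ were replaced by a finitely generated module over a ring where such splittings are unavailable, but that extra generality is not needed here. The ``in particular'' part is handled the same way in both arguments.
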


\begin{proof}
If $\dim(V)=1$ then $\sHom(\csheaf[V][M], \cdot) \simeq \sHom(\csheaf[\kk][M],\cdot)$ is the identity functor, so its higher derived functors vanish.  Otherwise we induct on $\dim(V)$: we pick a 1-dimensional subspace $L \subset V$, with quotient $X = V/L$, and suppose that we have proved the result for $\kk$-vector spaces of dimension less than $\dim(V)$.  We apply the contravariant functor $R\sHom(\cdot, \cF)$ to the short exact sequence $0 \to \csheaf[L][M] \to \csheaf[V][M] \to \csheaf[X][M] \to 0$
to get an exact sequence
\[ 0 \to R\sHom(\csheaf[X][M], \cF) \to R\sHom(\csheaf[V][M], \cF) \to R\sHom(\csheaf[L][M], \cF) \to 0. \]
For all $j>0$ we have $R^j\sHom(\csheaf[X][M],\cF) \cong 0$ and $R^j\sHom(\csheaf[L][M],\cF) \cong 0$ by hypothesis, so by exactness the same is true for $R^j\sHom(\csheaf[V][M],\cF)$ and we are done.
\end{proof}

The following technical lemmas will help us establish the vanishing result near points on 1-dimensional strata, corresponding to strands of $\Lambda$.

\begin{lemma} \label{lem:rpushforward-inclusion}
Let $N$ be a codimension-0 submanifold of a manifold $M$, with inclusion map $i: N \hookrightarrow M$, and suppose that $M\smallsetminus N$ is also a manifold of dimension $\dim(M)$.  Then $Ri_*\csheaf[V][N] \simeq i_*\csheaf[V][N]$ for any $\kk$-vector space $V$.
\end{lemma}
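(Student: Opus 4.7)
The plan is to prove the stronger statement $R^q i_*\csheaf[V][N] = 0$ for every $q \geq 1$, which implies the desired quasi-isomorphism since it says that the natural map $i_*\csheaf[V][N] \to Ri_*\csheaf[V][N]$ is an isomorphism in the derived category. Recall that $R^q i_*\csheaf[V][N]$ is the sheaf associated to the presheaf
\[
U \longmapsto H^q(U \cap N;\, V)
\]
on $M$, so it suffices to show that the stalk
\[
(R^q i_*\csheaf[V][N])_x = \varinjlim_{U \ni x} H^q(U \cap N;\, V)
\]
vanishes at every point $x \in M$. For each such $x$ I will exhibit a fundamental system of open neighborhoods $U$ with $H^q(U \cap N;\, V) = 0$ in every positive degree.

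I would split into three cases according to the position of $x$. If $x$ lies in the interior of $N$, then a cofinal family of coordinate balls $U \subset N$ gives $U \cap N = U$ contractible, so only $H^0$ is nonzero. If $x$ lies in the interior of $M \ssm N$, then all sufficiently small $U$ satisfy $U \cap N = \emptyset$, whose cohomology vanishes in every degree. The essential case is $x \in \partial N = \overline{N} \cap \overline{M \ssm N}$: the hypothesis that both $N$ and $M \ssm N$ are codimension-$0$ submanifolds of dimension $\dim M$ forces $\partial N$ to be locally a topological hypersurface separating them, so one can pick a chart near $x$ in which $N$ corresponds to an open half-space of $\R^{\dim M}$. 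In such a chart, a fundamental system of small Euclidean balls $U$ around $x$ satisfies that each $U \cap N$ is a contractible open half-ball, so again $H^q(U \cap N;\, V) = 0$ in positive degrees.

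The main obstacle is the last case: the hypothesis that $M \ssm N$ is also full-dimensional is essential for guaranteeing that $\partial N$ admits a collar and that $U \cap N$ remains contractible for arbitrarily small $U$. Without it, $N$ could have, for instance, a lower-dimensional boundary piece whose complement in a small ball would be homotopy equivalent to a sphere of positive dimension, and the direct limit of the $H^q$'s would no longer vanish. Under the stated hypothesis, however, the local half-space model and standard collar neighborhood arguments make the computation straightforward.
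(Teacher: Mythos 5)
Your proof follows the same approach as the paper: identify $R^q i_*\csheaf[V][N]$ as the sheafification of $U \mapsto H^q(U \cap N; V)$, and show the stalks vanish for $q\geq 1$ by producing a cofinal system of neighborhoods $U$ for which $U\cap N$ is contractible (via a local half-space chart near the boundary) or empty. The only cosmetic difference is that you split into three cases on the position of $x$, while the paper combines your first and third cases into the single case $p\in\overline{N}$, noting that $N$ may locally be a full ball, $\{x_1>0\}$, or its complement.
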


\begin{proof}
The higher direct images $R^ji_*\csheaf[V][N]$ are the sheaves associated to the presheaves $U \mapsto H^j(i^{-1}(U); \csheaf[V][N]) = H^j(U \cap N; \csheaf[V][N])$.  If $p \in \overline{N}$, we can take local coordinates $(x_1,\dots,x_n)$ about $p=(0,\dots,0)$ in which $N$ is either a neighborhood of $p$ or the set $\{x_1 > 0\}$ or its complement; then for any sufficiently small open ball $U$ about $p$, the intersection $U \cap N$ is contractible and hence $H^j(U \cap N;\csheaf[V][N]) = 0$ for $j>0$.  Otherwise if $p \not\in \overline{N}$ then every sufficiently small neighborhood of $p$ satisfies $U\cap N = \emptyset$ and the same conclusion holds.  Thus if $j>0$ then the sheaf $R^ji_*\csheaf[V][N]$ must vanish, since its stalks are all zero.
\end{proof}

\begin{rem}
The hypothesis $\dim(M\smallsetminus N) = \dim(M)$ is necessary in Lemma~\ref{lem:rpushforward-inclusion}: if $M=\R^n$ and $N=\R^n\smallsetminus\{0\}$, and if $U$ is an open ball about the origin, then $H^{n-1}(U \cap N; \csheaf[\kk][N]) \cong H^{n-1}(S^{n-1}; \csheaf[\kk][N]) \cong \kk$ and so $R^{n-1}i_*\csheaf[\kk][N]$ is a skyscraper sheaf with stalk $\kk$ at $0$.
\end{rem}

\begin{lemma} \label{lem:rhom-lower-shriek}
Let $i: O \hookrightarrow U$ be the inclusion of an open set into $U$ such that $U\smallsetminus O \subset U$ has codimension $0$.  If $\cG$ is a sheaf of $\kk$-modules on $U$ such that the restriction $\cG|_O = i^*\cG$ is a constant sheaf, then
\[ R\sHom(i_!\csheaf[V][O], \cG) \simeq \sHom(i_!\csheaf[V][O],\cG) \]
for any finite-dimensional $\kk$-vector space $V$.
\end{lemma}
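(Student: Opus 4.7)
The plan is to combine the standard adjunction $i_!\dashv i^*$ for an open inclusion with Lemmas \ref{lem:rhom-constant} and \ref{lem:rpushforward-inclusion}. At the sheaf level the adjunction delivers a natural isomorphism
\[ \sHom_U(i_!\cF,\cG)\;\cong\;i_*\sHom_O(\cF,i^*\cG) \]
for any $\cF$ on $O$ and $\cG$ on $U$. I would derive this by taking an injective resolution $\cG\to\cI^\bullet$, observing that $i_!$ is exact (so its right adjoint $i^*$ preserves injectives), and hence obtaining
\[ R\sHom_U(i_!\cF,\cG)\;\simeq\;\sHom_U(i_!\cF,\cI^\bullet)\;\simeq\;i_*\sHom_O(\cF,i^*\cI^\bullet)\;\simeq\;Ri_*R\sHom_O(\cF,i^*\cG), \]
the last quasi-isomorphism using that $i^*\cI^\bullet$ is still a resolution by injectives, so applying $i_*$ to it already computes $Ri_*$.

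Next, specialize to $\cF=\csheaf[V][O]$ and write $i^*\cG=\csheaf[W][O]$ for the constant sheaf provided by hypothesis. Lemma \ref{lem:rhom-constant} (and the ``in particular'' clause following it) identifies $R\sHom_O(\csheaf[V][O],\csheaf[W][O])$ with the honest constant sheaf $\csheaf[\Hom(V,W)][O]$, concentrated in degree zero. Because $U\ssm O$ has codimension $0$, Lemma \ref{lem:rpushforward-inclusion} then yields $Ri_*\csheaf[\Hom(V,W)][O]\simeq i_*\csheaf[\Hom(V,W)][O]$. Chaining these quasi-isomorphisms together gives
\[ R\sHom_U(i_!\csheaf[V][O],\cG)\;\simeq\;i_*\csheaf[\Hom(V,W)][O]\;\simeq\;\sHom_U(i_!\csheaf[V][O],\cG), \]
the second identification being the underived version of the same adjunction chain.

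This strategy is mostly formal; the only real sticking point is justifying that $i^*$ preserves injective sheaves. That follows by the standard abstract argument (a right adjoint of an exact functor preserves injectives, combined with the exactness of extension-by-zero along an open inclusion), and is essentially the same observation already invoked at the start of Section \ref{ssec:rhom-equals-hom} to legitimize local computation of $R\sHom$. Finite dimensionality of $V$, required to invoke Lemma \ref{lem:rhom-constant}, is given in the hypothesis, and no such assumption on $W$ is needed.
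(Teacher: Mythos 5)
Your proof is correct, and it reaches the same intermediate statement as the paper's own argument---the natural equivalence
\[
R\sHom(i_!\cF,\cG)\simeq Ri_*R\sHom(\cF,i^*\cG)
\]
for the open inclusion $i$, after which the specialization via Lemmas~\ref{lem:rhom-constant} and~\ref{lem:rpushforward-inclusion} is essentially verbatim what the paper does. Where you diverge is in how you establish that equivalence. The paper invokes the adjunction $R\sHom(Ri_!\cF,\cG)\simeq Ri_*R\sHom(\cF,i^!\cG)$ from \cite[Proposition~3.1.10]{KashiSchapi}, and then carries out a fairly lengthy chain of adjunction manipulations involving $R\Gamma_O$, $i^!\simeq i^*\circ R\Gamma_O$, and $\Gamma_O = i_*\circ i^*$, in order to trade the exceptional inverse image $i^!\cG$ for $i^*\cG$. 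You instead start from the underived sheaf-Hom adjunction $\sHom_U(i_!\cF,\cG)\cong i_*\sHom_O(\cF,i^*\cG)$, apply it to an injective resolution $\cI^\bullet$ of $\cG$, and use the exactness of $i_!$ to conclude that $i^*$ preserves injectives, so that $i^*\cI^\bullet$ already resolves $i^*\cG$. This bypasses $i^!$ entirely and is shorter and more elementary, so it is a genuine simplification of the paper's route.

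The one place you are slightly terse is the final identification $i_*\sHom_O(\cF,i^*\cI^\bullet)\simeq Ri_*R\sHom_O(\cF,i^*\cG)$. Knowing that $i^*\cI^\bullet$ is an injective resolution gives you $\sHom_O(\cF,i^*\cI^\bullet)\simeq R\sHom_O(\cF,i^*\cG)$, but to conclude that applying the \emph{underived} $i_*$ to that complex also computes $Ri_*$, you should observe additionally that $\sHom_O(\cF,\cJ)$ is flasque whenever $\cJ$ is injective, and flasque sheaves are $i_*$-acyclic. That is a standard fact, but it is the load-bearing step in your derived chain and should be stated explicitly.
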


{\allowdisplaybreaks
\begin{proof}
We will show more generally that given an open inclusion $i: O \hookrightarrow U$, we have
\begin{align} \label{eq:rhom-lower-shriek-adjunction}
R\sHom(i_!\cF, \cG) \simeq Ri_*R\sHom(\cF, i^*\cG)
\end{align}
for any sheaves $\cF$ on $O$ and $\cG$ on $U$.  Since $i$ is an open inclusion, the functor $i_!$ is exact \cite[Proposition~2.5.4]{KashiSchapi}, so that
\[ R\sHom(i_!\cF, \cG) \simeq R\sHom(Ri_!\cF, \cG) \simeq Ri_*R\sHom(\cF, i^!\cG) \]
where the last adjunction is \cite[Proposition~3.1.10]{KashiSchapi}.  Now we have $i^!(\cdot) \simeq i^* \circ R\Gamma_O(\cdot)$ and $\Gamma_O(\cdot) = i_* \circ i^*$ by \cite[Proposition~3.1.12]{KashiSchapi} and  \cite[Proposition~2.3.9]{KashiSchapi} respectively; since $i^*$ takes injectives to injectives and is exact, the latter implies that $R\Gamma_O = Ri_* \circ i^*$.  Combining all of this and observing that $\cF \simeq i^*i_* \cF$ gives
\begin{align*}
R\sHom(i_!\cF, \cG) &\simeq Ri_*R\sHom(i^*i_*\cF, i^* \circ R\Gamma_O(\cG)) \\
&\simeq R\sHom(i_*\cF, Ri_* \circ i^* \circ R\Gamma_O(\cG)) \\
&\simeq R\sHom(i_*\cF, R\Gamma_O \circ R\Gamma_O(\cG)) \\
&\simeq R\sHom(i_*\cF, R\Gamma_O(\cG)) \\
&\simeq R\sHom(i_*\cF, Ri_* \circ i^*\cG)
\end{align*}
where the first two rows are related by the adjunction \cite[Equation~(2.6.15)]{KashiSchapi}; and the third and fourth rows can be compared directly or by applying the adjunction \cite[Equation~(2.6.9)]{KashiSchapi} twice, noting that $((i_*\cF)_O)_O \simeq (i_*\cF)_O$ by \cite[Proposition~2.3.6]{KashiSchapi}, and then applying the same adjunction once more.  Another adjunction gives
\[ R\sHom(i_!\cF,\cG) \simeq Ri_* R\sHom(i^*i_*\cF, i^*\cG) \simeq Ri_* R\sHom(\cF, i^*\cG) \]
as claimed.  Now if $\cF = \csheaf[V][O]$ and $i^*\cG \simeq \csheaf[W][O]$ for some vector space $W$, then $R\sHom(\cF, i^*\cG)$ is the constant sheaf on $O$ with stalk $H=\Hom(V,W)$ by Lemma~\ref{lem:rhom-constant}, and then Lemma~\ref{lem:rpushforward-inclusion} says that $R\sHom(i_!\csheaf[V][O],\cG) \simeq Ri_*\csheaf[H][O] \simeq i_*\csheaf[H][O]$, as desired.
\end{proof}
}

\begin{prop} \label{prop:rhom-strand-0}
If $U$ is a sufficiently small neighborhood of a point on the interior of a strand of $\Lambda$ with Maslov potential 0, then $R\sHom(\cF|_U,\cG|_U) \simeq \sHom(\cF|_U,\cG|_U)$.
\end{prop}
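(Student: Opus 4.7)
The plan is to reduce the claim to a direct sheaf computation on a small standard local model. Shrinking $U$ if necessary, I would take it to be an open disk about $p$ whose intersection with the front of $\Lambda$ is a single arc $a\cap U$ with $\mu(a)=0$, dividing $U$ into an upper open half $U^+$ and a lower open half $U^-$. The microsupport condition (Section~\ref{sec:microsupport}) forces the downward generization map at $a$ to be an isomorphism, and the microlocal rank condition (Section~\ref{sec:microlocal-rank}) with $\mu(a)=0$ forces the upward generization to be an injection with cokernel of dimension $r$. Combining these with the combinatorial description of Section~\ref{sec:comb-descr-constr}, $\cF|_U$ is completely determined by constant stalks $V$ on $U^-\cup(a\cap U)$ and $W$ on $U^+$, together with a fixed linear injection $\iota:V\hookrightarrow W$ having $\dim(W/V)=r$. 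The same description applies to $\cG|_U$.

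The key step is to write down a short exact sequence of constructible sheaves on $U$:
\[
0 \to \csheaf[V][U] \to \cF|_U \to (j^+)_!\,\csheaf[W/V][U^+] \to 0,
\]
where $j^+:U^+\hookrightarrow U$ is the open inclusion; the left-hand map is the identity on $U^-\cup(a\cap U)$ and $\iota$ on $U^+$, so the quotient has stalks $0$ on $U^-\cup(a\cap U)$ and $W/V$ on $U^+$, giving the claimed $(j^+)_!$ extension. Existence of the sequence is then immediate from the stalk description.

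Next I would apply $R\sHom(\cdot,\cG|_U)$ to this sequence to obtain a distinguished triangle, and hence a long exact sequence of the local $\Ext$ sheaves $R^q\sHom$. For the constant-sheaf term, Lemma~\ref{lem:rhom-constant} gives $R\sHom(\csheaf[V][U],\cG|_U)\simeq \sHom(\csheaf[V][U],\cG|_U)$. For the $(j^+)_!$ term, Lemma~\ref{lem:rhom-lower-shriek} applies with $O=U^+$: its complement $U^-\cup(a\cap U)$ has codimension $0$ in $U$, and $\cG|_{U^+}$ is constant because $U^+$ lies inside a single open $2$-dimensional stratum. Thus both outer terms of the triangle are concentrated in degree $0$, and the long exact sequence collapses to give $R^q\sHom(\cF|_U,\cG|_U)=0$ for every $q\ge 1$, which is exactly the desired quasi-isomorphism.

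The only nontrivial step is exhibiting the short exact sequence, which depends on knowing $\cF|_U$ up to isomorphism from its combinatorial data; because $U$ contains no crossings or cusps, the induced stratification inside $U$ has only three strata and the construction reduces to an elementary check on the two generization maps, so in practice this should present no real obstacle. Everything else is a formal consequence of the two preceding lemmas together with the long exact sequence of the $R^q\sHom$.
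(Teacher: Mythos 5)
Your argument is correct and reaches the same conclusion, but it is genuinely more economical than the paper's proof, in a way worth pointing out. You decompose only $\cF$ by the short exact sequence
\[
0 \to \csheaf[V][U] \to \cF|_U \to (j^+)_!\,\csheaf[W/V][U^+] \to 0,
\]
and then apply $R\sHom(\cdot,\cG|_U)$ once, using Lemma~\ref{lem:rhom-constant} for the constant-sheaf term and Lemma~\ref{lem:rhom-lower-shriek} for the $(j^+)_!$ term; the latter applies immediately because $\cG|_{U^+}$ is constant (the open upper half-disk lies inside a single two-dimensional stratum) and $U\smallsetminus U^+$ has codimension $0$. The paper instead decomposes \emph{both} $\cF$ and $\cG$ by analogous short exact sequences, first applies $R\sHom(\cF,\cdot)$ to the $\cG$-sequence, and then applies $R\sHom(\cdot,\cdot)$ to the $\cF$-sequence against each of the two resulting targets $\csheaf[V][U]$ and $i_!\csheaf[D][N]$ in turn, invoking the same two lemmas for each piece. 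Both routes are valid: your observation that Lemmas~\ref{lem:rhom-constant} and~\ref{lem:rhom-lower-shriek} already apply with the undecomposed target $\cG|_U$ removes the second layer of the paper's decomposition. The likely reason the paper decomposes both sheaves is to keep the argument structurally parallel to the Maslov-potential-$1$ case (Proposition~\ref{prop:rhom-strand-1}), where the troublesome term $R\sHom(i_*\csheaf[K][S],\cdot)$ is handled only against the constant target $\csheaf[W][U]$ coming from the $\cG$-decomposition; the one-pass shortcut would not immediately carry over there.
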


\begin{proof}
We identify $U$ with the open unit disk in the $xz$-plane, consisting of three strata: the open sets $N = \{z > 0\}$ and $S = \{z < 0\}$, and the strand $\{z=0\}$, as shown in Figure~\ref{fig:strand-nbhd}.
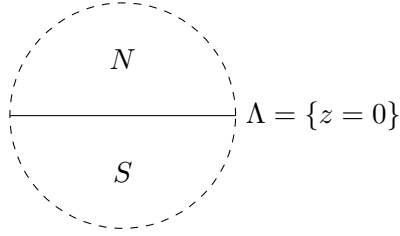
\begin{figure}
\begin{tikzpicture}
\def\nbhdradius{1.5}
\draw[dashed] (0,0) circle (\nbhdradius);
\draw (-\nbhdradius,0) -- (\nbhdradius,0) node [anchor=west] {$\Lambda=\{z=0\}$};
\node at (0,\nbhdradius/2) {$N$};
\node at (0,-\nbhdradius/2) {$S$};
\end{tikzpicture}
\caption{A neighborhood $U$ of a point on a strand of $\Lambda$.}
\label{fig:strand-nbhd}
\end{figure}
Then $\cF|_U$ and $\cG|_U$ are represented by quiver representations
\[ A \xleftarrow{\sim} A \xrightarrow{f} B \qquad \mathrm{and} \qquad V \xleftarrow{\sim} V \xrightarrow{g} W, \]
respectively, and $f$ and $g$ are both injective since the Maslov potential is 0.  Letting $C = \coker(f)$ and $D = \coker(g)$, we have exact sequences of sheaves on $U$ of the form
\begin{align}
0 \to \csheaf[A][U] \to \cF \to i_!\csheaf[C][N] \to 0 \label{eq:rhom-0-fseq} \\
0 \to \csheaf[V][U] \to \cG \to i_!\csheaf[D][N] \to 0, \label{eq:rhom-0-gseq}
\end{align}
where $i: N \hookrightarrow U$ is the inclusion, and applying $R\sHom(\cF,\cdot)$ to \eqref{eq:rhom-0-gseq} gives an exact sequence
\[ 0 \to R\sHom(\cF,\csheaf[V][U]) \to R\sHom(\cF,\cG) \to R\sHom(\cF,i_!\csheaf[D][N]) \to 0. \]

We first claim that $R\sHom(\cF,\csheaf[V][U])$ vanishes in positive degree: applying the contravariant functor $R\sHom(\cdot,\csheaf[V][U])$ to \eqref{eq:rhom-0-fseq} gives an exact sequence 
\[ 0 \to R\sHom(i_!\csheaf[C][N],\csheaf[V][U]) \to R\sHom(\cF,\csheaf[V][U]) \to R\sHom(\csheaf[A][U],\csheaf[V][U]) \to 0. \]
Applying Lemmas~\ref{lem:rhom-lower-shriek} and \ref{lem:rhom-constant} to the first and third terms respectively, we see that this is equivalent to
\[ 0 \to \sHom(i_!\csheaf[C][N],\csheaf[V][U]) \to R\sHom(\cF,\csheaf[V][U]) \to \sHom(\csheaf[A][U],\csheaf[V][U]) \to 0, \]
so that $R\sHom(\cF,\csheaf[V][U]) \simeq \sHom(\cF,\csheaf[V][U])$ as claimed.

We now wish to show that $R^j\sHom(\cF,i_!\csheaf[D][N]) = 0$ for $j>0$, so we apply $R\sHom(\cdot,i_!\csheaf[D][N])$ to \eqref{eq:rhom-0-fseq} to get
\[ 0 \to R\sHom(i_!\csheaf[C][N], i_!\csheaf[D][N]) \to R\sHom(\cF,i_!\csheaf[D][N]) \to R\sHom(\csheaf[A][U], i_!\csheaf[D][N]) \to 0. \]
We can apply Lemma~\ref{lem:rhom-lower-shriek} to $R\sHom(i_!\csheaf[C][N], i_!\csheaf[D][N])$, since $\left.i_!\csheaf[D][N]\right|_N$ is a constant sheaf, and $R\sHom(\csheaf[A][U],i_!\csheaf[D][N])$ vanishes in positive degrees by Lemma~\ref{lem:rhom-constant}, so the same is true of $R\sHom(\cF,i_!\csheaf[D][N])$ and we are done.
\end{proof}

\begin{prop} \label{prop:rhom-strand-1}
If $U$ is a sufficiently small neighborhood of a point on the interior of a strand of $\Lambda$ with Maslov potential 1, then $R\sHom(\cF|_U,\cG|_U) \simeq \sHom(\cF|_U,\cG|_U)$.
\end{prop}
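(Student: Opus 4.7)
The proof mirrors that of Proposition~\ref{prop:rhom-strand-0}, with the short exact sequence for $\cF$ dualized to reflect that the Maslov-$1$ condition forces the upward specialization maps to be surjections rather than injections. As before, identify $U$ with a small disk about a strand point, let $N = \{z>0\}$, $S = \{z<0\}$, and $i : N \hookrightarrow U$ be the open inclusion. Represent $\cF|_U$ and $\cG|_U$ by quiver data $A \xleftarrow{\sim} A \xrightarrow{f} B$ and $V \xleftarrow{\sim} V \xrightarrow{g} W$ with $f, g$ surjective; let $K = \ker(f)$ and $K' = \ker(g)$. The surjectivity of $f$ yields a natural sheaf epimorphism $\csheaf[A][U] \twoheadrightarrow \cF$ --- the identity on $\overline{S}$-stalks and $f$ on $N$-stalks --- with kernel $i_!\csheaf[K][N]$, giving the short exact sequence
\[ 0 \to i_!\csheaf[K][N] \to \csheaf[A][U] \to \cF \to 0, \]
playing the role of the sequence $0 \to \csheaf[A][U] \to \cF \to i_!\csheaf[C][N] \to 0$ from the Maslov-$0$ argument.

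Applying $R\sHom(\cdot, \cG)$ to this sequence produces a long exact sequence of derived sheaf-Hom sheaves on $U$. By Lemma~\ref{lem:rhom-constant}, $R\sHom(\csheaf[A][U], \cG) \simeq \sHom(\csheaf[A][U], \cG)$; by Lemma~\ref{lem:rhom-lower-shriek}, using that $\cG|_N = \csheaf[W][N]$ is constant, also $R\sHom(i_!\csheaf[K][N], \cG) \simeq \sHom(i_!\csheaf[K][N], \cG)$. Both outer terms in the sequence are therefore concentrated in degree $0$, so $R^q\sHom(\cF, \cG)$ vanishes for $q \geq 2$ automatically, and the vanishing of $R^1\sHom(\cF, \cG)$ reduces to showing that the induced map
\[ \iota^* \colon \sHom(\csheaf[A][U], \cG) \to \sHom(i_!\csheaf[K][N], \cG) \]
is surjective on stalks.

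The main obstacle is precisely this stalk-wise surjectivity check. At a point of $S$ the target stalk is zero, and at a point of $N$ the map is the restriction $\Hom(A, W) \to \Hom(K, W)$, which is surjective because $K$ is a subspace of $A$ over the field $\coeffs$. At a strand point $p$ the stalks are $\Hom(A, V)$ and $\Hom(K, W)$ respectively, and using that the sheaf restriction $\cG(V_0) \to \cG(V_0 \cap N)$ on a small disk $V_0$ about $p$ is the surjection $g \colon V \to W$, the map $\iota^*$ becomes $\phi \mapsto g \circ \phi|_K$. This is surjective because $g$ admits a $\coeffs$-linear section, through which any $\psi \colon K \to W$ lifts to a map $K \to V$ that then extends to $A \to V$ along $K \hookrightarrow A$. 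This verification is the only genuinely new ingredient beyond the Maslov-$0$ argument, and it completes the proof.
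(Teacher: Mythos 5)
Your proof is correct, but it takes a genuinely different route from the paper's. The paper decomposes \emph{both} $\cF$ and $\cG$ by short exact sequences of the form $0 \to i_*\csheaf[K][S] \to \cF \to \csheaf[B][U] \to 0$ (pushforward from the lower open set $S$), then chases through several auxiliary sequences, including $0 \to j_!\csheaf[K][N] \to \csheaf[K][U] \to i_*\csheaf[K][S] \to 0$, invoking the adjunction $R\sHom(\cF, Ri_*(\cdot)) \simeq Ri_*R\sHom(i^*\cF, \cdot)$ along the way, before finishing with a (trivial) stalk surjectivity check. You instead present $\cF$ as a \emph{quotient} of the constant sheaf via
\[ 0 \to i_!\csheaf[K][N] \to \csheaf[A][U] \to \cF \to 0 \]
and apply $R\sHom(\cdot,\cG)$ just once, never decomposing $\cG$: the two outer terms land in degree zero directly by Lemma~\ref{lem:rhom-constant} and Lemma~\ref{lem:rhom-lower-shriek} (the latter because $\cG|_N$ is constant), and the whole proposition reduces to a single stalk-wise surjectivity check. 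Your stalk computation at the strand point, identifying the map as $\phi \mapsto g\circ\phi|_K$ and using that $g$ splits and that linear maps extend from $K$ to $A$ over a field, is slightly more substantive than the paper's, but this trade is favorable: the argument is shorter, structurally cleaner, and mirrors more closely the subobject presentation $0 \to \csheaf[A][U] \to \cF \to i_!\csheaf[C][N] \to 0$ used in the Maslov-$0$ case.
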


\begin{proof}
Just as in Proposition~\ref{prop:rhom-strand-0}, we identify $U$ as the open unit disk with stratification $N \cup \{z=0\} \cup S$; then $\cF$ and $\cG$ correspond to quiver representations just as before, except that the Maslov potential is $1$ and so $f$ and $g$ are both surjective.  Let $K = \ker(f)\subset A$ and $L = \ker(g) \subset V$.  Then we have two short exact sequences of sheaves of the form
\begin{align}
0 \to i_*\csheaf[K][S] &\to \cF \to \csheaf[B][U] \to 0 \label{eq:rhom-1-fseq} \\
0 \to i_*\csheaf[L][S] &\to \cG \to \csheaf[W][U] \to 0, \label{eq:rhom-1-gseq}
\end{align}
where $i: S \hookrightarrow U$ is the inclusion map, and applying $R\sHom(\cF,\cdot)$ to \eqref{eq:rhom-1-gseq} gives
\[ 0 \to R\sHom(\cF, i_*\csheaf[L][S]) \to R\sHom(\cF, \cG) \to R\sHom(\cF,\csheaf[W][U]) \to 0. \]

We have $Ri_*\csheaf[L][S] \simeq i_*\csheaf[L][S]$ by Lemma~\ref{lem:rpushforward-inclusion}.    We also observe that $i^*\cF = \csheaf[A][S]$, so the adjunction $R\sHom(\cF,Ri_*\csheaf[L][S]) \simeq Ri_*R\sHom(i^*\cF,\csheaf[L][S])$ of \cite[Equation~(2.6.15)]{KashiSchapi} becomes
\[ R\sHom(\cF,i_*\csheaf[L][S]) \simeq Ri_*R\sHom(\csheaf[A][S], \csheaf[L][S]) \simeq Ri_*\csheaf[E][S] \simeq i_*\csheaf[E][S], \]
where $E = \Hom(A,L)$, by Lemma~\ref{lem:rhom-constant}.  We will show that $R\sHom(\cF,\csheaf[W][U])$ is supported in degree zero as well, which proves the proposition.

Applying $R\sHom(\cdot,\csheaf[W][U])$ to \eqref{eq:rhom-1-fseq}, we get an exact sequence
\[ 0 \to R\sHom(\csheaf[B][U], \csheaf[W][U]) \to R\sHom(\cF,\csheaf[W][U]) \to R\sHom(i_*\csheaf[K][S], \csheaf[W][U]) \to 0. \]
Again $R\sHom(\csheaf[B][U], \csheaf[W][U])$ vanishes in positive degree by Lemma~\ref{lem:rhom-constant}, so it suffices to show that $R\sHom(i_*\csheaf[K][S],\csheaf[W][U])$ does as well.  Letting $j: N \hookrightarrow U$ denote the inclusion map, we apply $R\sHom(\cdot,\csheaf[W][U])$ to the short exact sequence $0 \to j_!\csheaf[K][N] \to \csheaf[K][U] \to i_*\csheaf[K][S] \to 0$ to get another short exact sequence
\[ 0 \to R\sHom(i_*\csheaf[K][S],\csheaf[W][U]) \to \sHom(\csheaf[K][U],\csheaf[W][U]) \to \sHom(j_!\csheaf[K][N], \csheaf[W][U]) \to 0, \]
where in each of the last two terms we can replace $R\sHom$ with $\sHom$ by Lemmas~\ref{lem:rhom-constant} and \ref{lem:rhom-lower-shriek} respectively.  It follows that $R^q\sHom(i_*\csheaf[K][S],\csheaf[W][U]) = 0$ for $q\geq 2$, and that the sequence
\[ \sHom(\csheaf[K][U],\csheaf[W][U]) \to \sHom(j_!\csheaf[K][N],\csheaf[W][U]) \to R^1\sHom(i_*\csheaf[K][S],\csheaf[W][U]) \to 0 \]
is exact; but the first map in this last sequence is surjective, as can easily be seen on stalks, so $R^1\sHom(i_*\csheaf[K][S],\csheaf[W][U]) =0$ as well and we are done.
\end{proof}

Using these preliminary results, we can now prove Proposition~\ref{prop:rhom-equals-hom}. 

\begin{proof}[Proof of Proposition~\ref{prop:rhom-equals-hom}]
We have seen that the desired result can be checked locally, and Lemma~\ref{lem:rhom-constant} and Propositions~\ref{prop:rhom-strand-0} and \ref{prop:rhom-strand-1} collectively show that for all $q\geq 1$, the sheaf $R^q\sHom(\cF,\cG)$ is a direct sum of skyscraper sheaves supported on the crossings and cusps of $\Lambda$.  This implies that in the local-to-global Ext spectral sequence
\[ E_2^{p,q} = H^p(\R^2; R^q\sHom(\cF,\cG)) \ssabut \Ext^{p+q}(\cF,\cG) \]
of \cite[Th\'{e}or\`{e}me~II.7.3.3]{Godement}, all of the groups $E^{p,q}_2$ with $p,q\geq 1$ must vanish.  We have also shown that $E_2^{p,0} = H^p(\R^2;\sHom(\cF,\cG))$ vanishes for all $p\geq 2$, by Proposition~\ref{prop:h2shom-vanishes}.  Since the $E_2$ page is supported on the union of the line $p=0$ and the point $(p,q)=(1,0)$, the sequence collapses at this page.

If $\Lambda$ is the rainbow closure of an $n$-stranded braid, then the standard Legendrian unknot can be realized as the closure $\Lambda_0$ of the $n$-stranded braid $\sigma_1\sigma_2\dots\sigma_{n-1}$, where each $\sigma_i$ exchanges the $i$th and $(i+1)$st strands from the top.  If $U$ is a small neighborhood of a cusp of $\Lambda$, or a crossing of $\Lambda$ corresponding to the braid generator $\sigma_k$, then we let $U_0$ be the analogous neighborhood of the corresponding left or right cusp or $\sigma_k$-crossing of $\Lambda_0$.  It is easy to construct sheaves of $\kk$-modules $\cF_0 \in \Sh_r(\Lambda_0,\coeffs)$ and $\cG_0 \in \Sh_s(\Lambda_0,\coeffs)$ such that $\cF_0|_{U_0}$ and $\cG_0|_{U_0}$ are identical to $\cF|_U$ and $\cG|_U$ respectively; since $R\sHom(\cF,\cG)|_U \simeq R\sHom(\cF_0,\cG_0)|_{U_0}$, we will study the latter in order to determine the stalks of each $R^q\sHom(\cF,\cG)$ at the cusp or crossing in question.

Let $\Lambda_{\std}$ be the standard eye-shaped Legendrian unknot diagram, and note that $\Sh_r(\Lambda_0,\coeffs) \cong \Sh_r(\Lambda_{\std},\coeffs)$ and $\Sh_s(\Lambda_0,\coeffs) \cong \Sh_s(\Lambda_{\std},\coeffs)$ since $\Lambda_0$ and $\Lambda_{\std}$ are Legendrian isotopic; let $\cF_1 \in \Sh_r(\Lambda_{\std},\coeffs)$ and $\cG_1 \in \Sh_s(\Lambda_{\std},\coeffs)$ be sheaves corresponding to $\cF_0$ and $\cG_0$.  The functor $\Sh(\Lambda_\std,\coeffs) \to D(\kk{-}\mathrm{mod})$ sending a sheaf to its stalk inside the eye is an equivalence of categories \cite[Example~3.5]{STZ}, so we have $\Ext^\bullet(\cF_1,\cG_1) = \Ext^\bullet_{\kk}(\kk^r,\kk^s) = \Hom(\kk^r,\kk^s) \cong \kk^{rs}$.  We conclude that $\Ext^i(\cF_0,\cG_0) = 0$ for $i \geq 1$.

We now apply the above discussion of the Ext spectral sequence to the computation of $\Ext^i(\cF_0,\cG_0)$.  Since we know that these groups vanish for $i \geq 1$, examining the $E_2=E_\infty$ page tells us that 
\[ H^0(\R^2;R^q\sHom(\cF_0,\cG_0)) = E_2^{0,q} = 0 \]
for all $q \geq 1$.  But $R^q\sHom(\cF_0,\cG_0)$ is a sum of skyscraper sheaves for $q \geq 1$, so its zeroth cohomology is just the direct sum of its stalks at each cusp and crossing.  Thus the stalks of $R^q\sHom(\cF_0,\cG_0)$ all vanish for $q \geq 1$, and this implies as above that
\[ R^q\sHom(\cF|_U,\cG|_U) \simeq R^q\sHom(\cF_0|_{U_0},\cG_0|_{U_0}) = 0. \]
So we must have $R^q\sHom(\cF|_U,\cG|_U) = 0$ for all $q \geq 1$, and since this is true for a neighborhood $U$ of any cusp or crossing, we can conclude that $R^q\sHom(\cF,\cG) = 0$ for all $q \geq 1$, as desired.
\end{proof}

\section{Construction of the equivalence}
\label{sec:constr-equiv}
We are now ready to complete the proof of Theorem~\ref{thm:princ}.
\begin{proof}[Proof of Theorem \ref{thm:princ}]
We want to find an equivalence functor $$F: H^*(\Rep_n(\Lambda_m,\coeffs)) \rightarrow H^*(\Sh_n(\Lambda_m,\coeffs))$$
which is fully faithful and essentially surjective.

We first define $F$ on objects. To a representation described by $(A_1,\cdots, A_m)$ one associates the sheaf where $V=\coeffs^n$ and the maps are given by $(A_1^T,\cdots, A_m^T)$.  (Note that by equations~\eqref{eq:8} and~\eqref{eq:8a} we have
\[ P_m(A_1,\cdots,A_m)^T=P_m(A_m^T,\cdots,A_1^T)=(-1)^mQ_m(A_1,\cdots,A_m), \]
and thus Proposition~\ref{prop:sylvester} implies that this is indeed an object of $\Sh_n(\Lambda_m,\coeffs)$.) Equation~\eqref{eq:9} and Proposition~\ref{PropObjSH} imply that $F$ is essentially surjective on objects. 

Then to each morphism $(u_1,u_2): (A_1,\cdots,A_m) \to (A_1',\cdots,A_m')$, we associate the morphism $-(u_2^T,u_1^T)$. The first part of Proposition \ref{propReptorus} together with Proposition \ref{prp:ext0} implies that this is a well defined isomorphism on $\Hom^0$.

To a class $(w_1,\cdots,w_m)$ in $H^1\Hom(\rho,\rho')$, we associate the class of extensions given by $(w_1^T,\cdots,w_m^T)$.  The second part of Proposition~\ref{propReptorus} and Proposition~\ref{prp:ext1} show that this is a well defined isomorphism on $H^1\Hom$.  By Propositions~\ref{propReptorus} and~\ref{prop:ext2-vanishes}, all other $H^i\Hom$ groups ($i\neq 0,1$) in both categories are zero, so $F$ defines an isomorphism on all of $H^*\Hom$.

It now follows from the computation in Section~\ref{sec:compositions-sh-} and equations \eqref{eq:7}, \eqref{eq:5} and \eqref{eq:3} that the composition rule is preserved. Hence $F$ is a functor, and so it defines an equivalence of categories. 
\end{proof}

\bibliographystyle{alpha}
\bibliography{biblio}

\end{document}